\numberwithin{equation}{section}
\newtheorem{thm}{Theorem}[section]
\newtheorem{prop}[thm]{Proposition}
\newtheorem{lem}[thm]{Lemma}
\newtheorem{cor}[thm]{Corollary}
\theoremstyle{definition}
\newtheorem{defn}[thm]{Definition}
\theoremstyle{remark}
\newtheorem{rem}[thm]{Remark}
\newcommand{\Z}{\mathbb{Z}}
\newcommand{\R}{\mathbb{R}}
\newcommand{\aff}{\mathrm{aff}}
\newcommand{\St}{\mathrm{St}}
\newcommand{\coroot}[1]{#1^\vee}
\DeclareMathOperator{\Ker}{Ker}
\DeclareMathOperator{\Hom}{Hom}
\DeclareMathOperator{\Ext}{Ext}
\DeclareMathOperator{\Ind}{Ind}
\DeclareMathOperator{\Stab}{Stab}
\newcommand{\trivrep}{\boldsymbol{1}}
\newcommand{\Refs}[1]{\mathrm{Ref}(#1)}
\DeclareMathOperator{\val}{val}
\newcommand*{\opposite}[1]{#1^{\mathrm{op}}}
\DeclareMathOperator{\Coker}{Coker}
\title{Extension between simple modules of pro-$p$-Iwahori Hecke algebras}
\author{Noriyuki Abe}
\address{Department of Mathematics, Hokkaido University, Kita 10, Nishi 8, Kita-Ku, Sapporo, Hokkaido, 060-0810, Japan}
\email{abenori@math.sci.hokudai.ac.jp}
\subjclass[2010]{20C08, 20G25}
\begin{document}
\begin{abstract}
We calculate the extension groups between simple modules of pro-$p$-Iwahori Hecke algebras.
\end{abstract}
\maketitle

\section{Introduction}
Let $F$ be a non-archimedean local filed of residue characteristic $p$ and $G$ a connected reductive group over $F$.
Motivated by the modulo $p$ Langlands program, we study the modulo $p$ representation theory of $G$.
As in the classical (the representations over the field of complex numbers), Hecke algebras are useful tools for the study of modulo $p$ representations.
Especially, a pro-$p$-Iwahori Hecke algebra which is attached to a pro-$p$-Iwahori subgroup $I(1)$ has an important role in the study. (One reason is that any non-zero modulo $p$ representation has a non-zero $I(1)$-fixed vector.)
For example, this algebra is one of the most important tool for the proof of the classification theorem \cite{MR3600042}.

We focus on the representation theory of pro-$p$-Iwahori Hecke algebra.
Since the simple modules are classified~\cite{arXiv:1406.1003_accepted,MR3263136,Vigneras-prop-III}, we study its homological properties.
The aim of this paper is to calculate the extension between simple modules.
Note that such calculation was used to calculate the extension between irreducible modulo $p$ representations of $G$ when $G = \mathrm{GL}_2(\mathbb{Q}_p)$~\cite{MR2667891}.

We explain our result.
For each standard parabolic subgroup $P$, let $\mathcal{H}_P$ be the pro-$p$-Iwahori Hecke algebra of the Levi subgroup of $P$.
Then for a module $\sigma$ of $\mathcal{H}_P$, we can consider: the parabolic induction $I_P(\sigma)$ which is an $\mathcal{H}$-module, a certain parabolic subgroup $P(\sigma)$ containing $P$, a generalized Steinberg module $\St_Q^{P(\sigma)}(\sigma)$ where $Q$ is a parabolic subgroup between $P$ and $P(\sigma)$.
By \cite{arXiv:1406.1003_accepted}, each simple module is constructed by three steps: (1) starting with a supersingular module $\sigma$ of $\mathcal{H}_P$ where $P$ is a parabolic subgroup, (2) take a generalized Steinberg module $\St_Q^{P(\sigma)}(\sigma)$ (3) and take a parabolic induction $I_{P(\sigma)}(\St_Q^{P(\sigma)}(\sigma))$. (We do not explain the detail of notation here.)
Our calculation follows these steps.
Let $\pi_1 = I_{P(\sigma_1)}(\St_{Q_1}^{P(\sigma_1)}(\sigma_1))$ and $\pi_2 = I_{P(\sigma_2)}(\St_{Q_2}^{P(\sigma_2)}(\sigma_2))$ be two simple modules here $\sigma_1$ (resp. $\sigma_2$) is a simple supersingular module of $\mathcal{H}_{P_1}$ (resp.\ $\mathcal{H}_{P_2}$).

(1)
By considering the central characters, the extension $\Ext^i_{\mathcal{H}}(\pi_1,\pi_2)$ is zero if $P_1\ne P_2$ (Lemma~\ref{lem:vanishing of extension by central character}).
Hence we may assume $P_1 = P_2$.
Set $P = P_1$.

(2)
We prove
\[
\Ext^i_{\mathcal{H}}(I_{P(\sigma_1)}(\St_{Q_1}^{P(\sigma_1)}(\sigma_1)),I_{P(\sigma_2)}(\St_{Q_2}^{P(\sigma_2)}(\sigma_2)))
\simeq
\Ext^i_{\mathcal{H}_{P'}}(\St_{Q'_1}^{P'}(\sigma_1),\St_{Q'_2}^{P'}(\sigma_2))
\]
for some $Q'_1,Q'_2$ and $P'$ (Proposition~\ref{prop:removing parabolic induction}).
For the proof, we use the adjoint functors of parabolic induction and results in \cite{arXiv:1612.01312}.
Hence it is sufficient to calculate the extension groups between generalized Steinberg modules.

(3)
We prove
\[
\Ext^i_{\mathcal{H}}(\St_{Q_1}(\sigma_1),\St_{Q_2}(\sigma_2))
\simeq
\Ext^{i - r}_{\mathcal{H}}(e(\sigma_1),e(\sigma_2))
\]
for some (explicitly given) $r\in\Z_{\ge 0}$ or $0$ (Theorem~\ref{thm:extension between steinbergs}) using some involutions on $\mathcal{H}$ and results in \cite{arXiv:1704.00407}.
Here $e(\sigma)$ is the extension of $\sigma$ to $\mathcal{H}$ (Definition~\ref{defn:extension of module}).

(4)
We prove
\[
\Ext^i_{\mathcal{H}}(e(\sigma_1),e(\sigma_2))
\simeq
\Ext^i_{\mathcal{H}_P/I}(\sigma_1,\sigma_2)
\]
for some ideal $I\subset \mathcal{H}_P$ which acts on $\sigma_1$ and $\sigma_2$ by zero.
We use results of Ollivier-Schneider~\cite{MR3249689} for the proof.
The algebra $\mathcal{H}_P/I$ is not a pro-$p$-Iwahori Hecke algebra attached to a connected reductive group but a generic algebra in the sense of Vign\'eras~\cite[4.3]{MR3484112}.
Hence it is sufficient to calculate the extensions between supersingular simple modules of a generic algebra.

(5)
Now let $\mathcal{H}$ be a generic algebra and $\pi_1,\pi_2$ be simple supersinglar modules.
The algebra has the following decomposition: $\mathcal{H} = \mathcal{H}^\aff\otimes_{C[Z_\kappa]}C[\Omega(1)]$.
Here $\mathcal{H}^\aff\subset \mathcal{H}$ is an algebra called `the affine subalgebra', $\Omega(1)$ is a certain commutative group acting on $\mathcal{H}^\aff$, $Z_\kappa$ is a normal subgroup of $\Omega(1)$ and we have an embedding $C[Z_\kappa]\hookrightarrow \mathcal{H}^\aff$ which is compatible with the action of $\Omega(1)$ on $\mathcal{H}^\aff$.
Set $\Omega = \Omega(1)/Z_\kappa$.
By this decomposition and Hochschild-Serre type spectral sequence, we have an exact sequence
\[
0\to H^1(\Omega,\Hom_{\mathcal{H}^\aff}(\pi_1,\pi_2))
\to \Ext^1_{\mathcal{H}}(\pi_1,\pi_2)
\to \Ext^1_{\mathcal{H}^\aff}(\pi_1,\pi_2)^{\Omega}.
\]
We prove that the last map is surjective (Theorem~\ref{thm:surjectivity of exact sequence for supersingulars}).

Therefore it is sufficient to calculate two groups: $H^1(\Omega,\Hom_{\mathcal{H}^\aff}(\pi_1,\pi_2))$ and $\Ext^1_{\mathcal{H}^\aff}(\pi_1,\pi_2)^{\Omega}$.
By the classification result of supersingular simple modules~\cite{MR3263136,Vigneras-prop-III}, the restriction of $\pi_1,\pi_2$ to $\mathcal{H}^\aff$ are the direct sum of characters of $\mathcal{H}^\aff$.
Hence $\Hom_{\mathcal{H}^\aff}(\pi_1,\pi_2)$ is easily described and with this description we can calculate $H^1(\Omega,\Hom_{\mathcal{H}^\aff}(\pi_1,\pi_2))$ using well-known calculation of group cohomologies.
Note that $\Omega$ is commutative.
We also calculate $\Ext^1_{\mathcal{H}^\aff}(\Xi_1,\Xi_2)$ where $\Xi_1,\Xi_2$ are characters of $\mathcal{H}^\aff$ (Proposition~\ref{prop:extension between supersingulars, for H_aff}) following the method of Fayers~\cite{MR2134290}.
This is also calculated by  Nadimpalli~\cite{arXiv:1703.03110}.
Using this description, we can calculate $\Ext^1_{\mathcal{H}^\aff}(\pi_1,\pi_2)^{\Omega}$ and this finish the calculation of extensions between simple $\mathcal{H}$-modules.

\subsection*{Acknowledgment}
I thank Karol Koziol for explaining his calculation of some extensions using the resolution of Ollivier-Schneider.
This was helpful for the study.
Most of this work was done during my pleasant stay at Institut de mathématiques de Jussieu.
The work is supported by JSPS KAKENHI Grant Number 26707001.

\section{Preliminaries}\label{sec:Preliminaries}
\subsection{Pro-$p$-Iwahori Hecke algebra}\label{subsec:Prop-p-Iwahori Hecke algebra}
Let $\mathcal{H}$ be a pro-$p$-Iwahori Hecke algebra over a commutative ring $C$~\cite{MR3484112}.
We study modules over $\mathcal{H}$ in this paper.
\emph{In this paper, a module means a right module.}
The algebra $\mathcal{H}$ is defined with a combinatorial data $(W_\aff,S_\aff,\Omega,W,W(1),Z_\kappa)$ and a parameter $(q,c)$.

We recall the definitions.
The data satisfy the following.
\begin{itemize}
\item $(W_\aff,S_\aff)$ is a Coxeter system.
\item $\Omega$ acts on $(W_\aff,S_\aff)$.
\item $W = W_\aff\rtimes \Omega$.
\item $Z_\kappa$ is a finite commutative group.
\item The group $W(1)$ is an extension of $W$ by $Z_\kappa$, namely we have an exact sequence $1\to Z_\kappa\to W(1)\to W\to 1$.
\end{itemize}
The subgroup $Z_\kappa$ is normal in $W(1)$.
Hence the conjugate action of $w\in W(1)$ induces an automorphism of $Z_\kappa$, hence of the group ring $C[Z_\kappa]$.
We denote it by $c\mapsto w\cdot c$.

Let $\Refs{W_\aff}$ be the set of reflections in $W_\aff$ and $\Refs{W_\aff(1)}$ the inverse image of $\Refs{W_\aff}$ in $W(1)$.
The parameter $(q,c)$ is maps $q\colon S_\aff\to C$ and $c\colon \Refs{W_\aff(1)}\to C[Z_\kappa]$ with the following conditions. (Here the image of $s$ by $q$ (resp.\ $c$) is denoted by $q_s$ (resp.\ $c_s$).)
\begin{itemize}
\item For $w\in W$ and $s\in S_\aff$, if $wsw^{-1}\in S_\aff$ then $q_{wsw^{-1}} = q_s$.
\item For $w\in W(1)$ and $s\in \Refs{W_\aff(1)}$, $c_{wsw^{-1}} = w\cdot c_s$.
\item For $s\in \Refs{W_\aff(1)}$ and $t\in Z_\kappa$, we have $c_{ts} = tc_s$.
\end{itemize}
Let $S_\aff(1)$ be the inverse image of $S_\aff$ in $W(1)$.
For $s\in S_\aff(1)$, we write $q_s$ for $q_{\bar{s}}$ where $\bar{s}\in S_\aff$ is the image of $s$.
The length function on $W_\aff$ is denoted by $\ell$ and its inflation to $W$ and $W(1)$ is also denoted by $\ell$.

The $C$-algebra $\mathcal{H}$ is a free $C$-module and has a basis $\{T_w\}_{w\in W(1)}$.
The multiplication is given by
\begin{itemize}
\item (Quadratic relations) $T_{s}^2 = q_sT_{s^2} + c_sT_s$ for $s\in S_\aff(1)$.
\item (Braid relations) $T_{vw} = T_vT_w$ if $\ell(vw) = \ell(v) + \ell(w)$.
\end{itemize}
We extend $q\colon S_\aff\to C$ to $q\colon W\to C$ as follows.
For $w\in W$, take $s_1,\dots,s_l$ and $u\in\Omega$ such that $w = s_1\dotsm s_l u$ and $l = \ell(w)$.
Then put $q_w = q_{s_1}\dotsm q_{s_l}$.
From the definition, we have $q_{w^{-1}} = q_w$.
We also put $q_w = q_{\overline{w}}$ for $w\in W(1)$ with the image $\overline{w}$ in $W$.

\subsection{The data from a group}\label{subsec:The data from a group}
Let $F$ be a non-archimedean local field, $\kappa$ its residue field, $p$ its residue characteristic and $G$ a connected reductive group over $F$.
We can get the data in the previous subsection from $G$ as follows.
See \cite{MR3484112}, especially 3.9 and 4.2 for the details.

Fix a maximal split torus $S$ and denote the centralizer of $S$ in $G$ by $Z$.
Let $Z^0$ be the unique parahoric subgroup of $Z$ and $Z(1)$ its pro-$p$ radical.
Then the group $W(1)$ (resp.\ $W$) is defined by $W(1) = N_G(Z)/Z(1)$ (resp.\ $W = N_G(Z)/Z^0$) where $N_G(Z)$ is the normalizer of $Z$ in $G$.
We also let $Z_\kappa = Z^0/Z(1)$.
Let $G'$ be the group generated by the unipotent radical of parabolic subgroups \cite[II.1]{MR3600042} and $W_\aff$ the image of $G'\cap N_G(Z)$ in $W$.
Then this is a Coxeter group.
Fix a set of simple reflections $S_\aff$.
The group $W$ has the natural length function and let $\Omega$ be the set of length zero elements in $W$.
Then we get the data $(W_\aff,S_\aff,\Omega,W,W(1),Z_\kappa)$.

Consider the apartment attached to $S$ and an alcove surrounded by the hyperplanes fixed by $S_\aff$.
Let $I(1)$ be the pro-$p$-Iwahori subgroup attached to this alcove.
Then with $q_s = \#(I(1)\widetilde{s}I(1)/I(1))$ for $s\in S_\aff$ with a lift $\widetilde{s}\in N_G(Z)$ and suitable $c_s$, the algebra $\mathcal{H}$ is isomorphic to the Hecke algebra attached to $(G,I(1))$ \cite[Proposition~4.4]{MR3484112}.

When the data comes from the group $G$, let $W_\aff(1)$ be the image of $G'\cap N_G(Z)$ in $W(1)$ and put $\mathcal{H}_\aff = \bigoplus_{w\in W_\aff(1)}CT_w$.
This is a subalgebra of $\mathcal{H}$.

\emph{In this paper, except Section~\ref{sec:Ext1 between supersingulars}, we assume that the data comes from a connected reductive group.}
\subsection{The root system and the Weyl groups}
Let $W_0 = N_G(Z)/Z$ be the finite Weyl group.
Then this is a quotient of $W$.
Recall that we have the alcove defining $I(1)$.
Fix a special point $\boldsymbol{x}_0$ from the border of this alcove.
Then $W_0\simeq \Stab_W\boldsymbol{x}_0$ and the inclusion $\Stab_W\boldsymbol{x}_0\hookrightarrow W$ is a splitting of the canonical projection $W\to W_0$.
Throughout this paper, we fix this special point and regard $W_0$ as a subgroup of $W$.
Set $S_0 = S_\aff\cap W_0\subset W$.
This is a set of simple reflections in $W_0$.
For each $w\in W_0$, we fix a representative $n_w\in W(1)$ such that $n_{w_1w_2} = n_{w_1}n_{w_2}$ if $\ell(w_1w_2) = \ell(w_1) + \ell(w_2)$.

The group $W_0$ is the Weyl group of the root system $\Sigma$ attached to $(G,S)$.
Our fixed alcove and special point give a positive system of $\Sigma$, denoted by $\Sigma^+$.
The set of simple roots is denoted by $\Delta$.
As usual, for $\alpha\in\Delta$, let $s_\alpha\in S_0$ be a simple reflection for $\alpha$.

The kernel of $W(1)\to W_0$ (resp.\ $W\to W_0$) is denoted by $\Lambda(1)$ (resp.\ $\Lambda$).
Then $Z_\kappa\subset \Lambda(1)$ and we have $\Lambda = \Lambda(1)/Z_\kappa$.
The group $\Lambda$ (resp.\ $\Lambda(1)$) is isomorphic to $Z/Z^0$ (resp.\ $Z/Z(1)$).
Any element in $W(1)$ can be uniquely written as $n_w \lambda$ where $w\in W_0$ and $\lambda\in \Lambda(1)$.
We have $W = W_0\ltimes \Lambda$.

\subsection{The map $\nu$}
The group $W$ acts on the apartment attached to $S$ and the action of $\Lambda$ is by the translation.
Since the group of translations of the apartment is $X_*(S)\otimes_{\Z}\R$, we have a group homomorphism $\nu\colon \Lambda\to X_*(S)\otimes_{\Z}\R$.
The compositions $\Lambda(1)\to \Lambda\to X_*(S)\otimes_{\Z}\R$ and $Z\to \Lambda\to X_*(S)\otimes_{\Z}\R$ are also denoted by $\nu$.
The homomorphism $\nu\colon Z\to X_*(S)\otimes_{\Z}\R\simeq \Hom_\Z(X^*(S),\R)$ is characterized by the following: For $t\in S$ and $\chi\in X^*(S)$, we have $\nu(t)(\chi) = -\val(\chi(t))$ where $\val$ is the normalized valuation of $F$.

We call $\lambda\in \Lambda(1)$ dominant (resp.\ anti-dominant) if $\nu(\lambda)$ is dominant (resp.\ anti-dominant).

Since the group $W_\aff$ is a Coxeter system, it has the Bruhat order denoted by $\le$.
For $w_1,w_2\in W_\aff$, we write $w_1 < w_2$ if there exists $u\in \Omega$ such that $w_1u,w_2u\in W_\aff$ and $w_1u < w_2u$.
Moreover, for $w_1,w_2\in W(1)$, we write $w_1 < w_2$ if $w_1\in W_{\aff}(1)w_2$ and $\overline{w}_1 < \overline{w}_2$ where $\overline{w}_1,\overline{w}_2$ are the image of $w_1,w_2$ in $W$, respectively.
We write $w_1\le w_2$ if $w_1 < w_2$ or $w_1 = w_2$.

\subsection{Other basis}
For $w\in W(1)$, take $s_1,\dotsm,s_l\in S_\aff(1)$ and $u\in W(1)$ such that $l = \ell(w)$, $\ell(u) = 0$ and $w = s_1\dotsm s_l u$.
Set $T_w^* = (T_{s_1} - c_{s_1})\dotsm (T_{s_l} - c_{s_l})T_u$.
Then this does not depend on the choice and $\{T_w^*\}_{w\in W(1)}$ is a basis of $\mathcal{H}$.
In $\mathcal{H}[q_s^{\pm 1}]$, we have $T_w^* = q_wT_{w^{-1}}^{-1}$.

For simplicity, we always assume that our commutative ring $C$ contains a square root of $q_s$ which is denoted by $q_s^{1/2}$ for $s\in S_\aff$.
For $w = s_1\dotsm s_lu$ where $\ell(w) = l$ and $\ell(u) = 0$, $q_w^{1/2} = q_{s_1}^{1/2}\dotsm q_{s_l}^{1/2}$ is a square root of $q_w$.
For a spherical orientation $o$, there is a basis $\{E_o(w)\}_{w\in W(1)}$ of $\mathcal{H}$ introduced in \cite[5]{MR3484112}.
This satisfies the following product formula~\cite[Theorem~5.25]{MR3484112}.
\begin{equation}\label{eq:product formula}
E_o(w_1)E_{o\cdot w_1}(w_2) = q_{w_1w_2}^{-1/2}q_{w_1}^{1/2}q_{w_2}^{1/2}E_o(w_1w_2).
\end{equation}
\begin{rem}
Since we do not assume that $q_s$ is invertible in $C$, $q_{w_1w_2}^{-1/2}q_{w_1}^{1/2}q_{w_2}^{1/2}$ does not make sense in a usual way.
See \cite[Remark~2.2]{arXiv:1612.01312}.
\end{rem}

\subsection{Parabolic induction}
Since we have a positive system $\Sigma^+$, we have the minimal parabolic subgroup $B$ with a Levi part $Z$.
In this paper, \emph{parabolic subgroups are always standard, namely containing $B$}.
Note that such parabolic subgroups correspond to subsets of $\Delta$.

Let $P$ be a parabolic subgroup.
Attached to the Levi part of $P$ containing $Z$, we have the data $(W_{\aff,P},S_{\aff,P},\Omega_P,W_P,W_P(1),Z_\kappa)$ and the parameters $(q_P,c_P)$.
Hence we have the algebra $\mathcal{H}_P$.
The parameter $c_P$ is given by the restriction of $c$, hence we denote it just by $c$.
The parameter $q_P$ is defined as in \cite[4.1]{arXiv:1406.1003_accepted}.

For the objects attached to this data, we add the suffix $P$.
We have the set of simple roots $\Delta_P$, the root system $\Sigma_P$ and its positive system $\Sigma_P^+$, the finite Weyl group $W_{0,P}$, the set of simple reflections $S_{0,P}\subset W_{0,P}$, the length function $\ell_P$ and the base $\{T^P_w\}_{w\in W_P(1)}$, $\{T^{P*}_w\}_{w\in W_P(1)}$ and $\{E^P_o(w)\}_{w\in W_P(1)}$ of $\mathcal{H}_P$.
Note that we have no $\Lambda_P$, $\Lambda_P(1)$ and $Z_{\kappa,P}$ since they are equal to $\Lambda$, $\Lambda(1)$ and $Z_\kappa$.

An element $n_w\lambda\in W_P(1)$ where $w\in W_{P,0}$ and $\lambda\in \Lambda(1)$ is called $P$-positive (resp.\ $P$-negative) if for any $\alpha\in\Sigma^+\setminus\Sigma^+_P$ we have $\langle \alpha,\nu(\lambda)\rangle \le 0$ (resp.\ $\langle \alpha,\nu(\lambda)\rangle\ge 0$).
Set $\mathcal{H}_P^+ = \bigoplus_w CT^P_w$ where $w\in W_P(1)$ runs $P$-positive elements and define $\mathcal{H}_P^-$ by the similar way.
Then these are subalgebras of $\mathcal{H}_P$.
The linear maps $j^\pm_P\colon \mathcal{H}_P^{\pm}\to \mathcal{H}$ and $j^{\pm *}_P\colon \mathcal{H}_P^{\pm}\to \mathcal{H}$ defined by $j^\pm_P(T_w^P) = T_w$ and $j^{\pm *}_P(T_w^{P*}) = T_w^*$ are algebra homomorphisms.
\begin{prop}[{\cite[Theorem~1.4]{MR3437789}}]\label{prop:localization as Levi subalgebra}
Let $\lambda_P^+$ (resp.\ $\lambda_P^-$) be in the center of $W_P(1)$ such that $\langle \alpha,\nu(\lambda_P^+)\rangle < 0$ (resp.\ $\langle \alpha,\nu(\lambda_P^-)\rangle > 0$) for all $\alpha\in\Sigma^+\setminus\Sigma_P^+$.
Then $T^P_{\lambda_P^+} = T^{P*}_{\lambda_P^+} = E^P_{o_{-,P}}(\lambda_P^+)$ (resp.\ $T^P_{\lambda_P^-} = T^{P*}_{\lambda_P^-} = E^P_{o_{-,P}}(\lambda_P^-)$) is in the center of $\mathcal{H}_P$ and we have $\mathcal{H}_P = \mathcal{H}_P^+E^P_{o_{-,P}}(\lambda_P^+)^{-1}$ (resp.\ $\mathcal{H}_P = \mathcal{H}_P^-E^P_{o_{-,P}}(\lambda_P^-)^{-1}$).
\end{prop}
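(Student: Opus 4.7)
The plan is to deduce every assertion from the single geometric fact that $\lambda_P^+$ has length zero in $W_P$, i.e.\ $\lambda_P^+\in\Omega_P(1)$. Since $\lambda_P^+$ is central in $W_P(1)$, it commutes with a lift of $s_\alpha$ for every $\alpha\in\Sigma_P$; projecting to $W_P$ and using the conjugation action on $\Lambda$ then forces $\nu(\lambda_P^+)\in X_*(S)\otimes_\Z\R$ to be fixed by each reflection $s_\alpha$ with $\alpha\in\Sigma_P$, i.e.\ orthogonal to $\Sigma_P$. The reflection hyperplanes of $W_{\aff,P}$ are exactly the affine hyperplanes $\{x:\langle\alpha,x\rangle=k\}$ for $\alpha\in\Sigma_P$ and $k\in\Z$, and a translation in $(\Sigma_P)^{\perp}$ preserves each such hyperplane setwise. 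It follows that $\lambda_P^+$ stabilizes the fundamental $W_{\aff,P}$-alcove, whence $\ell_P(\lambda_P^+)=0$.

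Granting this, the three equalities $T^P_{\lambda_P^+}=T^{P*}_{\lambda_P^+}=E^P_{o_{-,P}}(\lambda_P^+)$ are immediate from the definitions: the product defining $T^{P*}_{\lambda_P^+}$ contains no $(T^P_s-c_s)$ factors, and the $E^P_o$-basis coincides with the $T^P$-basis on length-zero elements for every orientation. Centrality in $\mathcal{H}_P$ then follows from
\[
T^P_wT^P_{\lambda_P^+}=T^P_{w\lambda_P^+}=T^P_{\lambda_P^+ w}=T^P_{\lambda_P^+}T^P_w\qquad(w\in W_P(1)),
\]
where the outer equalities are the braid relation (length-additivity is automatic when one factor is length-zero) and the middle equality is the centrality of $\lambda_P^+$ in $W_P(1)$.

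For the localization claim, the braid relation gives $T^P_{\lambda_P^+}T^P_{(\lambda_P^+)^{-1}}=T^P_1=1$, so $T^P_{\lambda_P^+}$ is already invertible in $\mathcal{H}_P$. For arbitrary $w\in W_P(1)$, write $w=n_{w_0}\mu$ with $w_0\in W_{0,P}$ and $\mu\in\Lambda(1)$; the strict inequalities $\langle\alpha,\nu(\lambda_P^+)\rangle<0$ for $\alpha\in\Sigma^+\setminus\Sigma_P^+$ let us choose $n\ge 0$ large enough that $w(\lambda_P^+)^n$ is $P$-positive, and then
\[
T^P_w = T^P_{w(\lambda_P^+)^n}\,(T^P_{\lambda_P^+})^{-n} \in \mathcal{H}_P^+\cdot E^P_{o_{-,P}}(\lambda_P^+)^{-1}.
\]
Together with the obvious reverse inclusion this yields $\mathcal{H}_P=\mathcal{H}_P^+E^P_{o_{-,P}}(\lambda_P^+)^{-1}$; the case of $\lambda_P^-$ is symmetric.

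The main obstacle is the first step: pinning down that $\ell_P(\lambda_P^+)=0$. This rests on the geometric picture that the $W_{\aff,P}$-alcove decomposition is a decomposition of the full apartment $X_*(S)\otimes_\Z\R$, so alcoves are unbounded in the directions orthogonal to $\Sigma_P$ and a translation in $(\Sigma_P)^{\perp}$ really does stabilize each alcove as a set. Once this is established, every subsequent step is a direct manipulation via the braid relation and a cofinality argument on $P$-positivity.
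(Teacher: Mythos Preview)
The paper does not give a proof of this proposition; it is quoted verbatim from Vign\'eras~\cite[Theorem~1.4]{MR3437789} with no argument. So there is no ``paper's proof'' to compare against, and the question reduces to whether your argument is correct on its own.

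It is. Your central observation, that $\ell_P(\lambda_P^+)=0$, is the whole point, and once you have it every other assertion is a one-line consequence of the braid relation. Two small remarks. First, the geometric argument you give for $\ell_P(\lambda_P^+)=0$ (translations orthogonal to $\Sigma_P$ preserve the $W_{\aff,P}$-alcove) is fine, but you can also read it off directly from the standard length formula $\ell_P(\lambda)=\sum_{\alpha\in\Sigma_P^+}|\langle\alpha,\nu(\lambda)\rangle|$; this is essentially the fact quoted in the paper as \cite[Lemma~2.12]{arXiv:1612.01312}, and it makes the step less dependent on the alcove picture. Second, in the localization step you should note that $\lambda_P^+$ itself is $P$-positive (its defining inequalities are exactly $\langle\alpha,\nu(\lambda_P^+)\rangle<0$ for $\alpha\in\Sigma^+\setminus\Sigma_P^+$), so $T^P_{\lambda_P^+}\in\mathcal{H}_P^+$ and the localization $\mathcal{H}_P^+[\,(T^P_{\lambda_P^+})^{-1}\,]$ makes sense; your cofinality argument then shows that every $T^P_w$ lies in it, which is the desired equality.
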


Now for an $\mathcal{H}_P$-module $\sigma$, we define the parabolically induced module $I_P(\sigma)$ by
\[
I_P(\sigma) = \Hom_{(\mathcal{H}_P^-,j_P^{-*})}(\mathcal{H},\sigma).
\]
This satisfies:
\begin{itemize}
\item $I_P$ is an exact functor.
\item $I_P$ has the left adjoint functor $L_P$. The functor $L_P$ is exact.
\item $I_P$ has the right adjoint functor $R_P$.
\end{itemize}
For parabolic subgroups $P\subset Q$, we also defines $\mathcal{H}_P^{Q\pm}\subset \mathcal{H}_P$ and $j_P^{Q\pm}\colon \mathcal{H}_P^{Q\pm}\to \mathcal{H}_Q$ and $j_P^{Q\pm *}\colon \mathcal{H}_P^{Q\pm}\to \mathcal{H}_Q$.
This defines the parabolic induction $I_P^Q$ from the category of $\mathcal{H}_P$-modules to the category of $\mathcal{H}_Q$-modules.

\subsection{Twist by $n_{w_Gw_P}$}
For a parabolic subgroup $P$, let $w_P$ be the longest element in $W_{0,P}$.
In particular, $w_G$ is the longest element in $W_0$.
Let $P'$ be a parabolic subgroup corresponding to $-w_G(\Delta_P)$, in other words, $P' = n_{w_Gw_P}\opposite{P}n_{w_Gw_P}^{-1}$ where $\opposite{P}$ is the opposite parabolic subgroup of $P$ with respect to the Levi part of $P$ containing $Z$.
Set $n = n_{w_Gw_P}$.
Then the map $\opposite{P}\to P'$ defined by $p\mapsto npn^{-1}$ is an isomorphism which preserves the data used to define the pro-$p$-Iwahori Hecke algebras.
Hence $T^P_w\mapsto T^{P'}_{nwn^{-1}}$ gives an isomorphism $\mathcal{H}_P\to \mathcal{H}_{P'}$.
This sends $T_w^{P*}$ to $T_{nwn^{-1}}^{P'*}$ and $E^P_{o_{+,P}\cdot v}(w)$ to $E^{P'}_{o_{+,P'}\cdot nvn^{-1}}(nwn^{-1})$ where $v\in W_{0,P}$.

Let $\sigma$ be an $\mathcal{H}_P$-module.
Then we define an $\mathcal{H}_{P'}$-module $n_{w_Gw_P}\sigma$ via the pull-back of the above isomorphism: $(n_{w_Gw_P}\sigma)(T^{P'}_w) = \sigma(T^P_{n_{w_Gw_P}^{-1}wn_{w_Gw_P}})$.
For an $\mathcal{H}_{P'}$-module $\sigma'$, we define $n_{w_Gw_P}^{-1}\sigma'$ by $(n_{w_Gw_P}^{-1}\sigma')(T_w^P) = \sigma'(T_{n_{w_Gw_P}wn_{w_Gw_P}^{-1}}^{P'})$.

\subsection{The extension and the generalized Steinberg modiles}
Let $P$ be the parabolic subgroup and $\sigma$ an $\mathcal{H}_P$-module.
For $\alpha\in\Delta$, let $P_\alpha$ be a parabolic subgroup corresponding to $\Delta_P\cup \{\alpha\}$.
Then we define $\Delta(\sigma)\subset\Delta$ by
\begin{align*}
&\Delta(\sigma)\\& = \{\alpha\in\Delta\mid \langle \Delta_P,\coroot{\alpha}\rangle = 0,\ \text{$\sigma(T^P_\lambda) = 1$ for any $\lambda\in W_{\aff,P_\alpha}(1)\cap \Lambda(1)$}\}\cup \Delta_P.
\end{align*}
Let $P(\sigma)$ be the parabolic subgroup corresponding to $\Delta(\sigma)$.
\begin{prop}[{\cite[Theorem~3.6]{arXiv:1703.10384}}]
Let $\sigma$ be an $\mathcal{H}_P$-module and $Q$ a parabolic subgroup between $P$ and $P(\sigma)$.
Denote the parabolic subgroup corresponding to $\Delta_Q\setminus\Delta_P$ by $P_2$.
Then there exist a unique $\mathcal{H}_Q$-module $e_Q(\sigma)$ acting on the same space as $\sigma$ such that
\begin{itemize}
\item $e_Q(\sigma)(T_w^{Q*}) = \sigma(T_w^{P*})$ for any $w\in W_P(1)$.
\item $e_Q(\sigma)(T_w^{Q*}) = 1$ for any $w\in W_{\aff,P_2}(1)$.
\end{itemize}
\end{prop}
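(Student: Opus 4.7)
The plan is to define $e_Q(\sigma)$ on the basis $\{T^{Q*}_w\}_{w \in W_Q(1)}$ of $\mathcal{H}_Q$ and then verify that the resulting $C$-linear map respects the quadratic and braid relations. The key structural input is that $Q$ sits between $P$ and $P(\sigma)$, so each simple root $\alpha \in \Delta_Q \setminus \Delta_P$ satisfies $\langle \Delta_P, \coroot{\alpha}\rangle = 0$. Thus $\Sigma_P$ and $\Sigma_{P_2}$ are mutually orthogonal subsystems of $\Sigma_Q$, which yields $W_{0,Q} = W_{0,P} \times W_{0,P_2}$; at the pro-$p$ level, every $w \in W_Q(1)$ admits a decomposition $w = w_P w_2$ with $w_P \in W_P(1)$ and $w_2 \in W_{\aff,P_2}(1)$, and orthogonality makes this factorization compatible with the $T^*$-basis so that $T^{Q*}_w = T^{Q*}_{w_P} T^{Q*}_{w_2}$.

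Given such a factorization, the two defining conditions of the proposition force
\[
e_Q(\sigma)(T^{Q*}_w) = \sigma(T^{P*}_{w_P}),
\]
which both settles uniqueness and specifies a candidate $C$-linear map on all of $\mathcal{H}_Q$. For existence I would then check (i) the quadratic relations and (ii) the braid/commutation relations. The quadratic relations for $s \in S_{\aff,P}(1)$ hold because $\sigma$ is already an $\mathcal{H}_P$-module; those for $s \in S_{\aff,P_2}(1)$ reduce to showing that $(T^{Q*}_s)^2$ acts consistently with the prescribed identity action, which boils down to the hypothesis $\sigma(T^P_\lambda) = 1$ for $\lambda \in W_{\aff,P_\alpha}(1) \cap \Lambda(1)$ --- precisely the second clause of the definition of $\Delta(\sigma)$. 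The braid relations connecting the two families of generators collapse to commutation by orthogonality of the reflections involved, which is the first clause.

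The genuine obstacle, as is typical for pro-$p$-Iwahori Hecke algebras, is not the Coxeter-theoretic bookkeeping but the arithmetic at the level of $W_Q(1)$ rather than $W_Q$: the subgroups $W_P(1)$ and $W_{\aff,P_2}(1)$ need not commute on the nose, but only up to elements of $Z_\kappa$, and when one moves $T^{Q*}_{w_P}$ past $T^{Q*}_{w_2}$ these $Z_\kappa$-discrepancies reappear as twists of the coefficients. Showing that these twists are absorbed by $\sigma$ --- equivalently, that the relevant central elements lie in the kernel of $\sigma$ --- is exactly where the full strength of the hypothesis $\sigma(T^P_\lambda) = 1$ for $\lambda \in W_{\aff,P_\alpha}(1)\cap\Lambda(1)$ (for every $\alpha \in \Delta_{P_2}$) is needed. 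Once these commutation identities have been verified, the module axioms and uniqueness both follow formally.
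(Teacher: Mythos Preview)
The paper does not prove this proposition; it is quoted verbatim as \cite[Theorem~3.6]{arXiv:1703.10384} and used as a black box. So there is no proof in the present paper to compare your attempt against.

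That said, your outline is the natural strategy and matches the architecture of the argument in the cited reference: exploit the orthogonal decomposition $\Delta_Q = \Delta_P \sqcup \Delta_{P_2}$ to factor $W_Q(1)$, define $e_Q(\sigma)$ on the $T^{Q*}$-basis via that factorization, and then verify compatibility with the defining relations. You have also correctly located the genuine content: the factorization $w = w_P w_2$ is only well-defined modulo $W_P(1)\cap W_{\aff,P_2}(1)$, and the commutation of $W_P(1)$ with $W_{\aff,P_2}(1)$ holds only up to $Z_\kappa$, so one must check that the resulting ambiguities and twists are killed by the hypothesis $\sigma(T^P_\lambda)=1$ for $\lambda \in W_{\aff,P_\alpha}(1)\cap\Lambda(1)$.

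What you have written, however, is a plan rather than a proof: the phrases ``I would then check'' and ``once these commutation identities have been verified'' are precisely where all the work lies, and you have not carried any of it out. In particular you have not shown that the factorization is well-defined at the level of the proposed formula, nor have you written down and verified a single one of the $Z_\kappa$-twisted commutation relations. If you intend to supply a self-contained proof rather than cite \cite{arXiv:1703.10384}, those computations must actually appear.
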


\begin{defn}\label{defn:extension of module}
We call $e_Q(\sigma)$ the extension of $\sigma$ to $\mathcal{H}_Q$.
\end{defn}

A typical example of the extension is the trivial representation $\trivrep = \trivrep_G$.
This is a one-dimensional $\mathcal{H}$-module defined by $\trivrep(T_w) = q_w$, or equivalently $\trivrep(T_w^*) = 1$.
We have $\Delta(\trivrep_P) = \{\alpha\in\Delta\mid \langle \Delta_P,\coroot{\alpha}\rangle = 0\}\cup \Delta_P$ and, if $Q$ is a parabolic subgroup between $P$ and $P(\trivrep_P)$, we have $e_Q(\trivrep_P) = \trivrep_Q$

Let $P(\sigma)\supset P_0\supset Q_1\supset Q\supset P$.
Then as in \cite[4.5]{arXiv:1406.1003_accepted}, we have $I^{P_0}_{Q_1}(e_{Q_1}(\sigma))\subset I^{P_0}_Q(e_Q(\sigma))$.
Define
\[
\St_Q^{P_0}(\sigma) = \Coker\left(\bigoplus_{Q_1\supsetneq Q}I_{Q_1}^{P_0}(e_{Q_1}(\sigma))\to I^{P_0}_Q(e_Q(\sigma))\right).
\]
When $P_0 = G$, we write $\St_Q(\sigma)$ and call it generalized Steinberg modules.

\subsection{Supersingular modules}\label{subsec:supersingulars}
Let $\mathcal{O}$ be a conjugacy class in $W(1)$ which is contained in $\Lambda(1)$.
For a spherical orientation $o$, set $z_\mathcal{O} = \sum_{\lambda\in \mathcal{O}}E_o(\lambda)$.
Then this does not depend on $o$ and $z_\mathcal{O}\in \mathcal{Z}$ where $\mathcal{Z}$ is the center of $\mathcal{H}$ \cite[Theorem~5.1]{Vigneras-prop-III}.
The length of $\lambda\in \mathcal{O}$ does not depend on $\lambda$.
We denote it by $\ell(\mathcal{O})$.
For $\lambda\in \Lambda(1)$ and $w\in W(1)$, we put $w\cdot \lambda = w\lambda w^{-1}$.
\begin{defn}
Let $\pi$ be an $\mathcal{H}$-module.
We call $\pi$ supersingular if there exists $n\in \Z_{>0}$ such that $\pi z_\mathcal{O}^n = 0$ for any $\mathcal{O}$ such that $\ell(\mathcal{O}) > 0$.
\end{defn}
\begin{rem}\label{rem:a small reformulation of supersingularity}
Since $\pi z_\mathcal{O}\subset \pi$ is a submodule, if $\pi$ is simple then $\pi$ is supersingular if and only if $\pi z_\mathcal{O} = 0$ for any $\mathcal{O}$ such that $\ell(\mathcal{O}) > 0$.
Let $\lambda\in \Lambda(1)$.
Then $\ell(\lambda) \ne 0$ if and only if $\langle \alpha,\nu(\lambda)\rangle \ne 0$ for some $\alpha\in\Sigma$~\cite[Lemma~2.12]{arXiv:1612.01312}.
Hence a simple $\mathcal{H}$-module $\pi$ is supersingular if and only if $\pi(z_{W(1)\cdot \lambda}) = 0$ for any $\lambda$ such that $\langle \alpha,\nu(\lambda)\rangle\ne 0$ for some $\alpha\in\Sigma$.
\end{rem}

The simple supersingular $\mathcal{H}$-modules are classified in \cite{MR3263136,Vigneras-prop-III}.
We recall their results.
Let $W^\aff(1)$ be the inverse image of $W_\aff$ in $W(1)$.
\begin{rem}
When we do not assume that the data comes from a group, we have no $W_\aff(1)$ but we have $W^\aff(1)$.
Even though the data comes from a group, $W_\aff(1)$ is not equal to $W^\aff(1)$.
We have $Z_\kappa\subset W^\aff(1)$, however $Z_\kappa\not\subset W_\aff(1)$ in general.
Since we will not assume that the data comes from a group, we do not use $W_\aff(1)$ here.
\end{rem}
Put $\mathcal{H}^\aff = \bigoplus_{w\in W^\aff(1)}CT_w$.
Let $\chi$ be a character of $Z_\kappa$ and put $S_{\aff,\chi} = \{s\in S_\aff\mid \chi(c_{\widetilde{s}}) \ne 0\}$ where $\widetilde{s}\in W(1)$ is a lift of $s\in S_\aff$.
Note that if $\widetilde{s}'$ is another lift, then $\widetilde{s}' = t\widetilde{s}$ for some $t\in Z_\kappa$.
Hence $\chi(c_{\widetilde{s}'}) = \chi(t)\chi(c_{\widetilde{s}})$.
Therefore the condition does not depend on a choice of a lift.
Let $J\subset S_{\aff,\chi}$.
Then the character $\Xi = \Xi_{J,\chi}$ of $\mathcal{H}^\aff$ is defined by
\begin{align*}
\Xi_{J,\chi}(T_t) & = \chi(t) \quad (t\in Z_\kappa),\\
\Xi_{J,\chi}(T_{\widetilde{s}}) & = 
\begin{cases}
\chi(c_{\widetilde{s}}) & (s\in S_{\aff,\chi}\setminus J)\\
0 & (s\notin S_{\aff,\chi}\setminus J)
\end{cases}
=
\begin{cases}
\chi(c_{\widetilde{s}}) & (s\notin J),\\
0 & (s\in J).
\end{cases}
\end{align*}
where $\widetilde{s}\in W^\aff(1)$ is a lift of $s$ and the last equality easily follows from the definition of $S_{\aff,\chi}$.
Let $\Omega(1)_{\Xi}$ be the stabilizer of $\Xi$ and $V$ a simple $C[\Omega(1)_{\Xi}]$-module such that $V|_{Z_\kappa}$ is a direct sum of $\chi$.
Put $\mathcal{H}_\Xi = \mathcal{H}^\aff C[\Omega(1)_{\Xi}]$.
This is a subalgebra of $\mathcal{H}$.
For $X\in \mathcal{H}^\aff$ and $Y\in C[\Omega(1)_{\Xi}]$, we define the action of $XY$ on $\Xi\otimes V$ by $x\otimes y\mapsto xX\otimes yY$.
Then this defines a well-defined action of $\mathcal{H}_\Xi$ on $\Xi\otimes V$.
Set $\pi_{\chi,J,V} = (\Xi\otimes V)\otimes_{\mathcal{H}_\Xi}\mathcal{H}$.
\begin{prop}[{\cite[Theorem~1.6]{Vigneras-prop-III}}]
The module $\pi_{\chi,J,V}$ is simple and it is supersingular if and only if the groups generated by $J$ and generated by $S_{\aff,\chi}\setminus J$ are both finite.
If $C$ is an algebraically closed field, then any simple supersingular modules are given in this way.
\end{prop}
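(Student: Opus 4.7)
The plan is to analyze $\pi_{\chi,J,V}|_{\mathcal{H}^\aff}$, use Clifford-theoretic arguments for simplicity, compute the scalar action of the central elements $z_\mathcal{O}$ to characterize supersingularity, and then invert the construction for the classification. Using the decomposition $\mathcal{H}=\mathcal{H}^\aff\otimes_{C[Z_\kappa]}C[\Omega(1)]$, $\mathcal{H}$ is free over $\mathcal{H}_\Xi$ on a set of coset representatives of $\Omega(1)/\Omega(1)_\Xi$, whence
\[
\pi_{\chi,J,V}|_{\mathcal{H}^\aff}\simeq\bigoplus_{u\in\Omega(1)/\Omega(1)_\Xi}(u\cdot\Xi)\otimes V,
\]
the twisted characters $u\cdot\Xi$ being pairwise distinct by definition of $\Omega(1)_\Xi$. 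Any nonzero $\mathcal{H}$-submodule $\pi'$ inherits this weight decomposition, so its $\Xi$-isotypic part $\pi'_\Xi\subset V$ is nonzero and $C[\Omega(1)_\Xi]$-stable; simplicity of $V$ forces $\pi'_\Xi=V$, and applying $\Omega(1)$ recovers every summand, proving $\pi_{\chi,J,V}$ is simple.

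For supersingularity, by Remark~\ref{rem:a small reformulation of supersingularity} it suffices to check $\pi_{\chi,J,V}(z_{W(1)\cdot\lambda})=0$ for every $\lambda\in\Lambda(1)$ with $\nu(\lambda)\ne 0$. These central elements act by scalars equal to $\Xi(z_{W(1)\cdot\lambda})$ on the $\Xi$-weight space, so expanding $E_o(\mu)$ along a reduced expression via the product formula \eqref{eq:product formula}, each factor $E_o(\tilde s)$ specializes on $\Xi$ to $\Xi(T_{\tilde s})$ or $-\Xi(T_{\tilde s}^*)$ depending on which side of the $\tilde s$-hyperplane $o$ lies. The formulas
\[
\Xi_{J,\chi}(T_{\tilde s})=\begin{cases}\chi(c_{\tilde s}) & s\notin J,\\ 0 & s\in J,\end{cases}\qquad\Xi_{J,\chi}(T_{\tilde s}^*)=\begin{cases}-\chi(c_{\tilde s}) & s\in J,\\ 0 & s\in S_{\aff,\chi}\setminus J,\end{cases}
\]
(both vanishing for $s\notin S_{\aff,\chi}$) show that $\Xi(E_o(\mu))\ne 0$ requires the positive-side letters of the reduced expression of $\mu$ to lie in $S_{\aff,\chi}\setminus J$ and the negative-side letters to lie in $J$. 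Running over spherical orientations and conjugacy orbits, the vanishing of all $\Xi(z_{W(1)\cdot\lambda})$ for $\nu(\lambda)\ne 0$ is thus equivalent to neither $\langle J\rangle$ nor $\langle S_{\aff,\chi}\setminus J\rangle$ containing a nonzero translation of the apartment.

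For the classification when $C$ is algebraically closed, let $\pi$ be simple supersingular. A standard weight argument produces an $\mathcal{H}^\aff$-character $\Xi$ inside $\pi|_{\mathcal{H}^\aff}$: the restriction to $C[Z_\kappa]$ is semisimple with isotypic components, and supersingularity combined with the quadratic relation $T_{\tilde s}^2=q_sT_{\tilde s^2}+c_{\tilde s}T_{\tilde s}$ forces the scalars $\Xi(T_{\tilde s})$ into $\{0,\chi(c_{\tilde s})\}$ (with $\chi=\Xi|_{Z_\kappa}$), so $\Xi=\Xi_{J,\chi}$ for $J=\{s\in S_{\aff,\chi}\mid \Xi(T_{\tilde s})=0\}$. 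Setting $V$ to be the $\Xi$-isotypic component of $\pi$, which is $\mathcal{H}_\Xi$-stable, Frobenius reciprocity together with simplicity of $\pi$ yields an isomorphism $\pi\simeq\pi_{\chi,J,V}$ with $V$ simple over $C[\Omega(1)_\Xi]$.

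The principal obstacle is the supersingularity analysis: carefully tracking the spherical orientation through the $E_o$-basis to identify the correct combinatorial condition on reduced expressions, and then invoking the geometric fact that a subset of $S_\aff$ generates a finite subgroup of the affine Weyl group $W_\aff$ if and only if it fixes a nonempty face of the base alcove (equivalently, contains no nonzero translation). Together these turn the vanishing criterion into the stated finiteness of both $\langle J\rangle$ and $\langle S_{\aff,\chi}\setminus J\rangle$.
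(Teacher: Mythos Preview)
The paper does not prove this proposition: it is stated purely as a citation of \cite[Theorem~1.6]{Vigneras-prop-III}, with no argument given. So there is no ``paper's own proof'' to compare your attempt against; the result is imported wholesale from Vign\'eras (and Ollivier, \cite{MR3263136}).

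That said, your sketch follows the same broad strategy as the original references: Mackey/Clifford theory for simplicity, evaluation of the Bernstein-type central elements $z_{\mathcal{O}}$ on the character $\Xi$ via the alcove-walk expansion of $E_o$ for supersingularity, and a weight argument plus Frobenius reciprocity for exhaustion. Two places in your write-up would need real work to be a proof rather than a plan. First, the passage from ``$\Xi(E_o(\mu))\ne 0$ forces the positive-side/negative-side letters of a reduced word for $\mu$ to lie in $S_{\aff,\chi}\setminus J$ resp.\ $J$'' to the clean equivalence with finiteness of $\langle J\rangle$ and $\langle S_{\aff,\chi}\setminus J\rangle$ is the substantive combinatorial step; one must argue carefully over the orbit sum and over all orientations, not just a single $\mu$, and use the fact that a standard parabolic of $W_\aff$ is finite exactly when its generating set omits a vertex from each connected component of the affine Dynkin diagram. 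Second, in the classification paragraph the existence of an $\mathcal{H}^\aff$-eigenvector requires more than the quadratic relation: one needs that the commutative Bernstein subalgebra (or equivalently the family of $E_o(\lambda)$) acts by scalars on a common eigenvector, which is where the finite-dimensionality hypothesis on simple supersingulars and the structure of $\mathcal{A}_o$ at $q=0$ are used in \cite{Vigneras-prop-III}. Your outline is on the right track, but both of these steps are where the actual content of Vign\'eras' proof lies.
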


The construction of $\pi_{\chi,J,V}$ is still valid even if we do not assume that the data comes from a group.
In Section~\ref{sec:Ext1 between supersingulars}, we do not assume it and we calculate the extension between the modules constructed as above.

\subsection{Simple modules}
Assume that $C$ is an algebraically closed field of characteristic $p$.
We consider the following triple $(P,\sigma,Q)$.
\begin{itemize}
\item $P$ is a parabolic subgroup.
\item $\sigma$ is a simple supersingular $\mathcal{H}_P$-module.
\item $Q$ is a parabolic subgroup between $P$ and $P(\sigma)$.
\end{itemize}
Define
\[
I(P,\sigma,Q) = I_{P(\sigma)}(\St_Q^{P(\sigma)}(\sigma)).
\]
\begin{thm}[{\cite[Theorem~1.1]{arXiv:1406.1003_accepted}}]\label{thm:classification modulo supersingular}
The module $I(P,\sigma,Q)$ is simple and any simple module has this form.
Moreover, $(P,\sigma,Q)$ is unique up to isomorphism.
\end{thm}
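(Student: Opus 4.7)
The plan is to imitate the Langlands/Zelevinsky-style classification strategy adapted to pro-$p$-Iwahori Hecke algebras, splitting the argument into three interlocking parts: simplicity of $I(P,\sigma,Q)$, surjectivity of the parametrization, and uniqueness of the triple.

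For simplicity, I would first study the generalized Steinberg $\St_Q^{P(\sigma)}(\sigma)$ as an $\mathcal{H}_{P(\sigma)}$-module. The definition of $P(\sigma)$ is tailored so that $\sigma$ extends to $e_{Q'}(\sigma)$ on every Levi with $P\subset Q'\subset P(\sigma)$, and the inductions $I_{Q'}^{P(\sigma)}(e_{Q'}(\sigma))$ form a lattice indexed by such $Q'$. I expect that this lattice exhausts the irreducible constituents of $I_Q^{P(\sigma)}(e_Q(\sigma))$, each appearing with multiplicity one, so that $\St_Q^{P(\sigma)}(\sigma)$ is simple directly from the definition of the cokernel. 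Then I would show that the outer induction $I_{P(\sigma)}$ preserves simplicity on this module, which is an irreducibility criterion tightly linked to the maximality of $P(\sigma)$ among parabolics to which $\sigma$ extends (otherwise one could enlarge $P(\sigma)$ and split the induction).

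For surjectivity, given a simple $\mathcal{H}$-module $\pi$, I would choose a parabolic $P$ minimal among those with $L_P(\pi)\ne 0$, where $L_P$ is the exact left adjoint of $I_P$. Minimality together with Remark~\ref{rem:a small reformulation of supersingularity} should force $\sigma := L_P(\pi)$ to be supersingular on $\mathcal{H}_P$: any non-vanishing $z_{W(1)\cdot\lambda}$ action on $\sigma$ for $\lambda$ of positive length would, by adjointness, let one descend further. Exactness of $L_P$ and simplicity of $\pi$ force $\sigma$ to be simple. Adjointness then gives a non-zero map $\pi\to I_P(\sigma)$, which I would factor first through $I_{P(\sigma)}(e_{P(\sigma)}(\sigma))$ and then through a Steinberg quotient, identifying $\pi\simeq I(P,\sigma,Q)$ for the appropriate intermediate $Q$.

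For uniqueness, given $I(P,\sigma,Q)\simeq I(P',\sigma',Q')$, I would apply $L_P$ to both sides and appeal to a Mackey/geometric-lemma-type decomposition of $L_P\circ I_{P(\sigma')}(\St_{Q'}^{P(\sigma')}(\sigma'))$. Supersingularity of $\sigma$ and $\sigma'$ should cause all cross terms indexed by $P\ne P'$ to vanish, so matching of the supersingular summands forces $P=P'$ and $\sigma\simeq\sigma'$, after which the layered structure of the Steinberg quotients pins down $Q=Q'$. The main obstacle, and the step I would have to work out most carefully, is exactly this Mackey-type decomposition in the Hecke-algebra setting: there is no geometric glueing to rely on, so one must work directly with the $P$-positive/$P$-negative decomposition of Proposition~\ref{prop:localization as Levi subalgebra}, tracking how the bases $\{T_w^*\}$ and $\{E_o(w)\}$ behave under tensor with a supersingular module and using the product formula \eqref{eq:product formula} to control the surviving terms. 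Existing filtrations on $L_P\circ I_{P'}$ coming from double cosets $W_{0,P}\backslash W_0 / W_{0,P'}$ and results of \cite{arXiv:1612.01312} would be the natural tools to invoke.
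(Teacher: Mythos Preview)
The paper does not contain a proof of this theorem. Theorem~\ref{thm:classification modulo supersingular} is stated with the citation \cite[Theorem~1.1]{arXiv:1406.1003_accepted} and is used as an input to the rest of the paper; no argument for it is given here. There is therefore nothing in this paper to compare your proposal against.

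That said, your outline is broadly the shape of the argument in \cite{arXiv:1406.1003_accepted}: one establishes the simplicity of the generalized Steinberg modules inside $I_P^{P(\sigma)}(e_P(\sigma))$ via a multiplicity-one lattice description, proves irreducibility of the outer parabolic induction from $P(\sigma)$, and handles exhaustion and uniqueness through adjoint functors and a filtration on $L_P\circ I_{P'}$. One point to flag: your surjectivity step proposes to take $\sigma = L_P(\pi)$ directly and deduce that it is simple from exactness of $L_P$; exactness alone does not give this, and in the actual proof one rather picks a simple quotient (or subquotient) of the appropriate Jacquet-type module and then checks supersingularity. Likewise, the ``Mackey-type decomposition'' you allude to is genuinely the hard technical core of \cite{arXiv:1406.1003_accepted} and of \cite{arXiv:1612.01312}; your proposal correctly identifies it as the main obstacle but does not go beyond naming it. As written, the proposal is a reasonable roadmap rather than a proof, and in any case the present paper simply imports the result.
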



\section{Reduction to supersingular representations}
Let $(P_1,\sigma_1,Q_1)$ and $(P_2,\sigma_2,Q_2)$ be triples as in Theorem~\ref{thm:classification modulo supersingular}.
\subsection{Central character}
We prove the following lemma.
\begin{lem}\label{lem:vanishing of extension by central character}
If $\Ext^i_{\mathcal{H}}(I(P_1,\sigma_1,Q_1),I(P_2,\sigma_2,Q_2))\ne 0$ for some $i\in\Z_{\ge 0}$, then $P_1 = P_2$.
\end{lem}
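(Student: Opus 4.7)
The plan is to argue by central characters. Since $C$ is an algebraically closed field and each $\pi_j = I(P_j,\sigma_j,Q_j)$ is simple, Schur's lemma provides central characters $\chi_j\colon \mathcal{Z}(\mathcal{H})\to C$. For any pair of $\mathcal{H}$-modules the two natural actions of $\mathcal{Z}(\mathcal{H})$ on $\Ext^i_{\mathcal{H}}$ (through either argument) coincide, so when $\chi_1\ne \chi_2$ any $z\in\mathcal{Z}(\mathcal{H})$ with $\chi_1(z)\ne \chi_2(z)$ makes multiplication by the scalar $\chi_1(z)-\chi_2(z)$ both zero and invertible on $\Ext^i_{\mathcal{H}}(\pi_1,\pi_2)$, forcing the group to vanish for every $i$. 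The task is therefore to show $\chi_1\ne \chi_2$ whenever $P_1\ne P_2$.

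The natural candidates are the Bernstein-type elements $z_{\mathcal{O}}$ for $W(1)$-conjugacy classes $\mathcal{O}\subset \Lambda(1)$, which span $\mathcal{Z}(\mathcal{H})$ by \cite{Vigneras-prop-III}. My first step is to determine $\chi_\pi(z_{\mathcal{O}})$ for a simple $\pi = I(P,\sigma,Q)$ with $\sigma$ supersingular. Unpacking parabolic induction through $j_P^{-*}$ and combining the product formula \eqref{eq:product formula} with Proposition~\ref{prop:localization as Levi subalgebra}, one reduces the central action on $\pi$ to a central action on $\sigma$. Supersingularity of $\sigma$ annihilates every $z^P_{\mathcal{O}_P}$ with $\ell_P(\mathcal{O}_P)>0$, and by Remark~\ref{rem:a small reformulation of supersingularity} the $W_P(1)$-orbits of length zero in $\Lambda(1)$ consist of elements $\lambda$ with $\langle\alpha,\nu(\lambda)\rangle=0$ for every $\alpha\in\Sigma_P$. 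The expected conclusion is that $\chi_\pi(z_{\mathcal{O}})$ equals $\sigma$ evaluated on a combinatorial sum running over those $\lambda\in\mathcal{O}$ whose $\nu$-image is $W_{0,P}$-fixed; if no such $\lambda$ exists, then $\chi_\pi(z_{\mathcal{O}})=0$.

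Granting this formula, given $P_1\ne P_2$ I will pick, after possibly swapping the two, a $W_{0,P_2}$-fixed element $\lambda\in\Lambda(1)$ whose $W_0$-orbit does not meet $\Lambda^{W_{0,P_1}}$. Such a $\lambda$ exists because $\Delta_{P_1}\ne\Delta_{P_2}$ controls which subspaces of $\Lambda\otimes_{\Z}\R$ are fixed by the relevant parabolic subgroups of $W_0$. By replacing $\lambda$ by a sufficiently dominant translate we may also arrange $\sigma_2(T^{P_2}_\lambda)\ne 0$, so that $\chi_{\pi_2}(z_{W(1)\cdot \lambda})\ne 0$ while $\chi_{\pi_1}(z_{W(1)\cdot \lambda})=0$ by construction. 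This gives the required distinguishing central element.

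The main obstacle will be the first step---producing a clean formula for $\chi_\pi(z_{\mathcal{O}})$ on a simple module built from a supersingular $\sigma$ via generalized Steinberg and parabolic induction. It requires tracking the $j_P^{-*}$ twist, pushing the central action through the defining exact sequence of $\St_Q^{P(\sigma)}(\sigma)$, and isolating the $W_{0,P}$-fixed contributions. Once that formula is established, the combinatorial choice of $\lambda$ is essentially automatic, and the central character argument then yields the lemma.
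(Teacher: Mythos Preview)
Your strategy is exactly the paper's: distinguish the central characters using the Bernstein elements $z_{\mathcal{O}}$, compute their action on $I_P(\sigma)$ by pushing through $j_P^{-*}$ to a sum of central elements of $\mathcal{H}_P$ acting on $\sigma$, and then invoke supersingularity. The paper formalizes your first step as Lemma~\ref{lem:central character of parabolic induction}: on $I_P(\sigma)$ the element $z_{\mathcal{O}}$ acts through $\sum_i z^P_{\mathcal{O}_i}$, where the $\mathcal{O}_i$ are the $W_P(1)$-orbits in the $P$-negative part $\mathcal{O}_P$ of $\mathcal{O}$.

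There is, however, a genuine gap in your second step. You propose to pick a $W_{0,P_2}$-fixed $\lambda$ and then, ``by replacing $\lambda$ by a sufficiently dominant translate'', arrange $\sigma_2(T^{P_2}_\lambda)\ne 0$ and conclude $\chi_{\pi_2}(z_{W(1)\cdot\lambda})\ne 0$. The translation is a red herring: any $\lambda$ with $\nu(\lambda)$ fixed by $W_{0,P_2}$ already has $\ell_{P_2}(\lambda)=0$, so $T^{P_2}_\lambda$ is invertible in $\mathcal{H}_{P_2}$ and $\sigma_2(T^{P_2}_\lambda)$ is automatically nonzero. The real issue is that $\chi_{\pi_2}(z_{W(1)\cdot\lambda})$ is not $\sigma_2$ of a single $T^{P_2}_\lambda$ but of a \emph{sum} over $W_{P_2}(1)$-orbits in the $P_2$-negative part of $W(1)\cdot\lambda$, and nothing in your argument rules out cancellation in that sum. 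The paper closes this gap by taking $\lambda=\lambda_{P_2}^-$ as in Proposition~\ref{prop:localization as Levi subalgebra}, which lies in the \emph{center} of $W_{P_2}(1)$; it then shows that for such $\lambda$ the $P$-negative part of the orbit is a single $W_P(1)$-orbit, for any $P$. Hence $z_{\mathcal{O}}$ acts on $I(P,\sigma,Q)$ by a single $z^P_{\mathcal{O}_P}$, which by Remark~\ref{rem:a small reformulation of supersingularity} is nonzero on $\sigma$ precisely when $P\subset P_2$. Applying this with $P=P_1$ and $P=P_2$ produces the desired distinguishing central element whenever $P_1\not\subset P_2$.
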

To prove this lemma, we calculate the action of the center $\mathcal{Z}$ on simple modules.
To do it, we need to calculate the action of $\mathcal{Z}$ on a parabolic induction.
\begin{lem}\label{lem:central character of parabolic induction}
Let $P$ be a parabolic subgroup, $\sigma$ a right $\mathcal{H}_{P}$-module.
For $W(1)$-orbit $\mathcal{O}$ in $\Lambda(1)$, set $\mathcal{O}_P = \{\lambda\in \mathcal{O}\mid \text{$\lambda$ is $P$-negative}\}$.
Then we have the following
\begin{enumerate}
\item The subset $\mathcal{O}_P\subset \Lambda(1)$ is $W_P(1)$-stable.
\item 
Let $\mathcal{O}_P = \mathcal{O}_1\cup \dots \cup \mathcal{O}_r$ be the decomposition into $W_P(1)$-orbits.
The action of $z_\mathcal{O}\in \mathcal{Z}$ on $I_P(\sigma)$ is induced by the action of $\sum_i z^P_{\mathcal{O}_i}$ on $\sigma$.
\end{enumerate}
\end{lem}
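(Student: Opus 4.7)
The first assertion is routine. Write $w\in W_P(1)$ as $w = n_v\mu$ with $v\in W_{0,P}$ and $\mu\in\Lambda(1)$. Conjugation by $\mu$ changes $\lambda\in\Lambda(1)$ by a factor in $Z_\kappa\subset\ker(\nu)$, while conjugation by $n_v$ induces $v$ on $\nu(\Lambda(1))$; hence $\nu(w\lambda w^{-1}) = v\cdot\nu(\lambda)$. Since $W_{0,P}$ permutes $\Sigma^+\setminus\Sigma_P^+$ setwise (a standard Levi property), we have $\langle\alpha,\nu(w\lambda w^{-1})\rangle = \langle v^{-1}\alpha,\nu(\lambda)\rangle\ge 0$ for each $\alpha\in\Sigma^+\setminus\Sigma_P^+$, so $w\lambda w^{-1}\in\mathcal{O}_P$.

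For the second assertion, set $z := \sum_i z^P_{\mathcal{O}_i}\in\mathcal{Z}_P$; since every $\lambda\in\mathcal{O}_P$ is $P$-negative we will have $z\in\mathcal{H}_P^-$. Unwinding the definition of $I_P(\sigma) = \Hom_{(\mathcal{H}_P^-,j_P^{-*})}(\mathcal{H},\sigma)$ with its right $\mathcal{H}$-action, the statement amounts to the pointwise identity
\[
\varphi(z_\mathcal{O}\, h) \;=\; \varphi(h)\cdot z \qquad (\varphi\in I_P(\sigma),\ h\in\mathcal{H}).
\]
Pick the anti-dominant spherical orientation $o_-$ for $W$, whose restriction to $W_P(1)$ is the anti-dominant $P$-orientation $o_{-,P}$, and write $z_\mathcal{O} = \sum_{\lambda\in\mathcal{O}}E_{o_-}(\lambda)$. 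The main combinatorial step is the $E$-basis comparison
\[
E_{o_-}(\lambda) \;=\; j_P^{-*}\bigl(E^P_{o_{-,P}}(\lambda)\bigr) \qquad (\lambda\in\mathcal{O}_P),
\]
deduced from the product formula~\eqref{eq:product formula} and the compatibility of orientations. Summing gives $\sum_{\lambda\in\mathcal{O}_P}E_{o_-}(\lambda) = j_P^{-*}(z)$, and the defining linearity $\varphi(x\cdot j_P^{-*}(a)) = \varphi(x)\cdot a$ together with the centrality of $z_\mathcal{O}$ accounts for the $\mathcal{O}_P$-part: it contributes exactly $\varphi(h)\cdot z$ to $\varphi(z_\mathcal{O}\, h)$.

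It remains to eliminate the residual $\sum_{\lambda\in\mathcal{O}\setminus\mathcal{O}_P}E_{o_-}(\lambda)$. The plan is a shift-and-localize argument: let $\lambda_P^-$ and $\xi := E^P_{o_{-,P}}(\lambda_P^-)$ be as in Proposition~\ref{prop:localization as Levi subalgebra}, and choose $N$ large enough that $\lambda\cdot(\lambda_P^-)^N$ is $P$-negative for every $\lambda\in\mathcal{O}$ (possible because $\mathcal{O}$ is finite and $\lambda_P^-$ is strictly $P$-negative). Right-multiplying the desired identity by $j_P^{-*}(\xi)^N$, which acts invertibly on $I_P(\sigma)$ by Proposition~\ref{prop:localization as Levi subalgebra}, brings the entire orbit into the $P$-negative regime where the case already settled applies; one then divides back to remove the residual. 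The main obstacle is the $E$-basis identity $E_{o_-}(\lambda) = j_P^{-*}(E^P_{o_{-,P}}(\lambda))$ for $P$-negative $\lambda$ and its interaction with the $\xi^N$-twist: both require careful tracking of how the scalar factor $q_{w_1w_2}^{-1/2}q_{w_1}^{1/2}q_{w_2}^{1/2}$ appearing in~\eqref{eq:product formula} behaves under passage between $\mathcal{H}$ and $\mathcal{H}_P$ via $j_P^{-*}$, exploiting that the restriction of $o_-$ to $W_P(1)$ is $o_{-,P}$.
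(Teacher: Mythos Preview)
Your argument for (1) is fine, and your treatment of the $\mathcal{O}_P$-part in (2) matches the paper's. The gap is in your handling of the non-$P$-negative terms. Your shift-and-localize scheme does not actually eliminate them: after right-multiplying by $\xi^N$ and using the product formula termwise you obtain
\[
\varphi(h\,z_{\mathcal{O}})\cdot\xi^N
= \varphi(h)\cdot\sum_{\lambda\in\mathcal{O}} c_\lambda\, E^P_{o_{-,P}}\bigl(\lambda(\lambda_P^-)^N\bigr),
\]
where $c_\lambda$ is the product-formula scalar $q_{\lambda(\lambda_P^-)^N}^{-1/2}q_\lambda^{1/2}q_{(\lambda_P^-)^N}^{1/2}$ computed in $\mathcal{H}$. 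The quantity you want on the right is $\varphi(h)\cdot z\,\xi^N$, and comparing $E^P$-basis coefficients (the elements $\lambda(\lambda_P^-)^N$ are pairwise distinct) forces precisely $c_\lambda = 0$ for every $\lambda\notin\mathcal{O}_P$. So the shift does not bypass the vanishing problem; it reduces exactly to it. The ``careful tracking'' you mention is not a bookkeeping technicality---the vanishing of those scalars is the entire content of the step.

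The paper supplies this missing ingredient directly and with no shift: for $\lambda$ not $P$-negative one chooses $\alpha\in\Sigma^+\setminus\Sigma_P^+$ with $\langle\alpha,\nu(\lambda)\rangle<0$; since $\langle\alpha,\nu(\lambda_P^-)\rangle>0$, the points $\nu(\lambda)$ and $\nu(\lambda_P^-)$ lie in distinct closed Weyl chambers, and then $E(\lambda)E(\lambda_P^-) = 0$ in $\mathcal{H}$ by \cite[(2.1), Lemma~2.11]{arXiv:1612.01312}. A single multiplication by the invertible $E^P(\lambda_P^-)$ (rather than $N$ of them) then kills $\varphi(hE(\lambda))$ outright. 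Once you have this Weyl-chamber vanishing, your $N$-fold shift is unnecessary; without it, the shift accomplishes nothing.
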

\begin{proof}
Since $\Sigma^+\setminus \Sigma_P^+$ is stable under the action of $W_{0,P}$, (1) follows from the definition of $P$-negative.

Let $\varphi\in I_P(\sigma) = \Hom_{(\mathcal{H}_P^-,j_P^{-*})}(\mathcal{H},\sigma)$.
Then for $X\in \mathcal{H}$, we have
\[
	(\varphi z_\mathcal{O})(X) = \varphi(z_\mathcal{O} X) = \varphi(Xz_\mathcal{O})
\]
since $z_\mathcal{O}$ is in the center of $\mathcal{H}$.
Hence by the definition of $z_\mathcal{O}$, we have
\[
	(\varphi z_\mathcal{O})(X)
	=
	\sum_{\lambda\in \mathcal{O}}\varphi(X E(\lambda))
	=
	\sum_i\sum_{\lambda\in \mathcal{O}_i} \varphi(X E(\lambda)) + \sum_{\lambda\in\mathcal{O},\ \text{not $P$-negative}}\varphi(XE(\lambda))
\]
We prove the vanishing of the second term.

Let $\lambda\in \mathcal{O}$ which is not $P$-negative.
Then there exists $\alpha\in\Sigma^+\setminus\Sigma_P^+$ such that $\langle \alpha,\nu(\lambda)\rangle < 0$.
Let $\lambda_P^-$ be as in Proposition~\ref{prop:localization as Levi subalgebra}.
Then $\langle \alpha,\nu(\lambda_P^-)\rangle > 0$.
Hence $\nu(\lambda)$ and $\nu(\lambda_P^-)$ does not belongs to the same closed Weyl chamber.
Therefore we have $E(\lambda)E(\lambda_P^-) = 0$ in $\mathcal{H}_C$ by \cite[(2.1), Lemma~2.11]{arXiv:1612.01312}.
Hence by \cite[Lemma~2.6]{arXiv:1612.01312},
\begin{align*}
\varphi(XE(\lambda)) & = \varphi(X E(\lambda))E^P(\lambda_P^-)E^P(\lambda_P^-)^{-1}\\
& = \varphi(X E(\lambda)j_P^{-*}(E^P(\lambda_P^-)))E^P(\lambda_P^-)^{-1}\\
& = \varphi(X E(\lambda)E(\lambda_P^-))E^P(\lambda_P^-)^{-1} = 0.
\end{align*}

If $\lambda\in \mathcal{O}_i$, then $E(\lambda)\in \mathcal{H}_P^-$.
Hence we have $E(\lambda) = j_P^{-*}(E^P(\lambda))$ by \cite[Lemma~2.6]{arXiv:1612.01312}.
Therefore
\[
\sum_{\lambda\in \mathcal{O}_i}\varphi(XE(\lambda)) = \varphi(X)\sum_{\lambda\in \mathcal{O}_i}\sigma(E^P(\lambda)) = \varphi(X)\sigma(z^P_{\mathcal{O}_i}).
\]
We get the lemma.
\end{proof}


\begin{lem}
Let $(P,\sigma,Q)$ be a triple as in Theorem~\ref{thm:classification modulo supersingular}.
Let $R$ be a parabolic subgroup and $\lambda = \lambda_R^-$ as in Proposition~\ref{prop:localization as Levi subalgebra}.
Then $z_{\mathcal{O}_\lambda} \ne 0$ on $I(P,\sigma,Q)$ if and only if $P\subset R$.
\end{lem}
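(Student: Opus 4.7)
The plan is to compute the scalar by which $z_{\mathcal{O}_\lambda}$ acts on the simple module $I(P,\sigma,Q)$, using Lemma~\ref{lem:central character of parabolic induction} to reduce it to an action of central elements of $\mathcal{H}_P$ on the simple supersingular $\sigma$, and then to exploit supersingularity together with the geometry of $W_0\cdot\nu(\lambda)$.

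Since $\sigma$ is simple, any central element of $\mathcal{H}_P$ acts on it by a scalar. Set $\mathcal{O}_\lambda=W(1)\cdot\lambda$ and let $\mathcal{O}_i$ denote the $W_P(1)$-orbits making up the $P$-negative part $(\mathcal{O}_\lambda)_P$. Lemma~\ref{lem:central character of parabolic induction} then says that $z_{\mathcal{O}_\lambda}$ acts on $I_P(\sigma)$ by the single scalar $c=\sigma\bigl(\sum_i z^P_{\mathcal{O}_i}\bigr)$. Moreover $I(P,\sigma,Q)$ is a simple subquotient of $I_P(\sigma)$, via the surjections $I_P(\sigma)=I_Q(I_P^Q(\sigma))\twoheadrightarrow I_Q(e_Q(\sigma))\twoheadrightarrow I(P,\sigma,Q)$, together with the fact that $e_Q(\sigma)$ occurs as a subquotient of $I_P^Q(\sigma)$, a fact coming out of the construction of $\St_Q^{P(\sigma)}(\sigma)$ underlying Theorem~\ref{thm:classification modulo supersingular}. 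Hence $z_{\mathcal{O}_\lambda}$ also acts on $I(P,\sigma,Q)$ by $c$, and the lemma reduces to the assertion $c\ne 0\iff P\subset R$.

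By Remark~\ref{rem:a small reformulation of supersingularity} applied at the level of $\mathcal{H}_P$, supersingularity of $\sigma$ yields $\sigma(z^P_{\mathcal{O}_i})=0$ whenever $\ell_P(\mathcal{O}_i)>0$, so only orbits with $\ell_P(\mathcal{O}_i)=0$ can contribute to $c$. Suppose such a $\mu\in(\mathcal{O}_\lambda)_P$ with $\ell_P(\mu)=0$ exists: then $\langle\alpha,\nu(\mu)\rangle=0$ for $\alpha\in\Delta_P$ (from $W_{0,P}$-fixedness of $\nu(\mu)$) and $\langle\alpha,\nu(\mu)\rangle\ge 0$ for $\alpha\in\Delta\setminus\Delta_P$ (from $P$-negativity of $\mu$), so $\nu(\mu)$ lies in the closed anti-dominant Weyl chamber. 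That chamber being a fundamental domain for $W_0$, the only element of $W_0\cdot\nu(\lambda)$ lying in it is $\nu(\lambda)$ itself, whose stabiliser in $W_0$ is $W_{0,R}$. Hence $\nu(\mu)=\nu(\lambda)$ and $W_{0,P}\subset W_{0,R}$, i.e.\ $P\subset R$; and because $\lambda$ is central in $W_R(1)$, any $\mu\in W(1)\cdot\lambda$ with $\nu(\mu)=\nu(\lambda)$ is then forced to equal $\lambda$.

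Thus, if $P\not\subset R$, no orbit contributes and $c=0$, while if $P\subset R$ the unique contributing orbit is the singleton $\{\lambda\}$, for which $z^P_{\{\lambda\}}=E^P(\lambda)=T^P_\lambda$ (using $\ell_P(\lambda)=0$) is invertible in $\mathcal{H}_P$, so $c=\sigma(T^P_\lambda)\ne 0$. The main obstacles are the ``simple subquotient'' step, which rests on the fact, woven into the construction of generalized Steinberg modules, that $e_Q(\sigma)$ occurs as a subquotient of $I_P^Q(\sigma)$, and the geometric step identifying $\nu(\lambda)$ as the unique anti-dominant representative of its $W_0$-orbit together with the use of centrality of $\lambda$ in $W_R(1)$ to pin down $\mu=\lambda$.
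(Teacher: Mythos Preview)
Your argument is essentially the paper's, with the same ingredients (Lemma~\ref{lem:central character of parabolic induction}, the subquotient relation to $I_P(\sigma)$, supersingularity, the fundamental-domain property, and centrality of $\lambda$ in $W_R(1)$), only organized slightly differently: rather than first proving that $\mathcal{O}_P$ is a single $W_P(1)$-orbit and then testing its $\ell_P$, you let several orbits coexist, kill those of positive $\ell_P$ by supersingularity, and then show the length-zero locus is either empty or the singleton $\{\lambda\}$. Two small slips: the arrow $I_P(\sigma)\twoheadrightarrow I_Q(e_Q(\sigma))$ goes the wrong way (the paper has $e_Q(\sigma)\hookrightarrow I_P^Q(\sigma)$, hence $I_Q(e_Q(\sigma))\hookrightarrow I_P(\sigma)$; this does not affect your subquotient conclusion), and the chamber you land in is the closed \emph{dominant} one, not the anti-dominant one, since $\langle\alpha,\nu(\mu)\rangle\ge 0$ for all $\alpha\in\Delta$ and $\nu(\lambda)$ itself is dominant.
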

\begin{proof}
Set $\mathcal{O} = \mathcal{O}_\lambda$.
Since $\Lambda(1)\subset W_R(1)$ and $\lambda$ is in the center of $W_R(1)$, $\lambda$ commutes with $\Lambda(1)$.
Hence $\mathcal{O} = \{n_w\cdot \lambda\mid w\in W_0\}$.

We prove that $W_P(1)$ acts transitively on $\mathcal{O}_P$.
Let $\mu\in \mathcal{O}_P$ and take $w\in W_0$ such that $\mu = n_w\cdot \lambda$.
Take $v\in W_{0,P}$ such that $v(\nu(\mu))$ is dominant with respect to $\Sigma^+_P$.
Since $v^{-1}(\Sigma^+\setminus\Sigma^+_P) = \Sigma^+\setminus\Sigma^+_P$ and $\mu$ is $P$-negative, we have $\langle v(\nu(\mu)),\alpha\rangle \ge 0$ for any $\alpha\in\Sigma^+\setminus\Sigma^+_P$.
Hence $v(\nu(\mu))$ is dominant.
Now $\nu(\lambda)$ and $v(\nu(\mu)) = vw(\nu(\lambda))$ is both dominant.
Hence $vw\in \Stab_{W_0}(\nu(\lambda)) = W_{0,R}$.
Since $\lambda$ is in the center of $W_R(1)$, we have $(n_vn_w)\cdot \lambda = \lambda$.
Hence $\mu = n_v^{-1}\cdot \lambda$.
Therefore $W_P(1)$ acts transitively on $\mathcal{O}_P$.

By the definition, $I(P,\sigma,Q)$ is a quotient of $I_{P(\sigma)}(I_Q^{P(\sigma)}(e_Q(\sigma))) = I_Q(e_Q(\sigma))$.
Moreover, by the definition of the extension, we have an embedding $e_Q(\sigma)\hookrightarrow I_P^Q(\sigma)$.
Hence we have $I_Q(e_Q(\sigma))\hookrightarrow I_Q(I_P^Q(\sigma)) = I_P(\sigma)$.
Let $\chi\colon \mathcal{Z}_P\to C$ be a central character of $\sigma$.
By the above lemma and the fact that $\mathcal{O}_P$ is a single $W_P(1)$-orbit, on $I_P(\sigma)$, $z_{\mathcal{O}_\lambda}$ acts by $\chi(z^P_{\mathcal{O}_P})$.
Since $\nu(\lambda)$ is dominant, $\lambda$ is $P$-negative.
Hence $\lambda\in \mathcal{O}_P$.
By the definition of supersingular representations with Remark~\ref{rem:a small reformulation of supersingularity}, $\chi(z_{\mathcal{O}_P}^P) = 0$ if and only if $\langle \alpha,\nu(\lambda)\rangle\ne 0$ for some $\alpha\in\Sigma_P^+$.
The condition on $\lambda = \lambda_R^-$ tells that $\langle \alpha,\nu(\lambda)\rangle\ne 0$ if and only if $\alpha\in\Sigma^+\setminus\Sigma^+_R$.
Therefore $\chi(z_{\mathcal{O}_P}^P)\ne 0$ if and only if $\Sigma^+_P\cap (\Sigma^+\setminus\Sigma_R^+) = \emptyset$ which is equivalent to $P\subset R$.
\end{proof}

\begin{proof}[Proof of Lemma~\ref{lem:vanishing of extension by central character}]
Assume that $P_1\ne P_2$.
Then we have $P_1\not\subset P_2$ or $P_1\not\supset P_2$.
Assume $P_1\not\subset P_2$ and take $\lambda = \lambda_{P_2}^-$ as in Proposition~\ref{prop:localization as Levi subalgebra}.
Put $\mathcal{O} = \{w\cdot \lambda\mid w\in W(1)\}$.
Then $z_{\mathcal{O}} = 0$ on $I(P_1,\sigma_1,Q_1)$ and $z_{\mathcal{O}} \ne 0$ on $I(P_2,\sigma_2,Q_2)$.
Hence the vanishing follows from a standard argument since $z_{\mathcal{O}}\in \mathcal{H}$ is in the center.
The case of $P_1\not\supset P_2$ is proved by the same way.
\end{proof}

\subsection{Reduction to generalized Steinberg modules}
By Lemma~\ref{lem:vanishing of extension by central character}, to calculate the extension between $I(P_1,\sigma_1,Q_1)$ and $I(P_2,\sigma_2,Q_2)$, we may assume $P_1 = P_2$.
We prove the following proposition.
\begin{prop}\label{prop:removing parabolic induction}
The extension group $\Ext^i_{\mathcal{H}}(I(P,\sigma_1,Q_1),I(P,\sigma,Q_2))$ is isomorphic to
\[
\Ext^i_{\mathcal{H}_{P(\sigma_1)\cap P(\sigma_2)}}(\St^{P(\sigma_1)\cap P(\sigma_2)}_{Q_1\cap P(\sigma_2)}(\sigma_1),\St^{P(\sigma_1)\cap P(\sigma_2)}_{Q_2}(\sigma_2)).
\]
if $Q_2\subset P(\sigma_1)$ and $\Delta(\sigma_1)\subset \Delta_{Q_1}\cup \Delta(\sigma_2)$.
Otherwise, the extension group is zero.
\end{prop}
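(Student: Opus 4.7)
The plan is to peel off the two outer parabolic inductions by adjunction, invoke a geometric-lemma type decomposition from \cite{arXiv:1612.01312} to recognize the resulting functor as parabolic induction from $\mathcal{H}_{P(\sigma_1)\cap P(\sigma_2)}$, and then rewrite the outcome in terms of generalized Steinberg modules over that smaller algebra.

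First, since $I_{P(\sigma_1)}$ is exact and admits both a left adjoint $L_{P(\sigma_1)}$ and a right adjoint $R_{P(\sigma_1)}$, the standard adjunction (together with the results of \cite{arXiv:1612.01312} describing $R_{P(\sigma_1)}$ on parabolically induced modules, which should give enough acyclicity to avoid passing to derived functors) yields
\[
\Ext^i_{\mathcal{H}}(I_{P(\sigma_1)}(\St^{P(\sigma_1)}_{Q_1}(\sigma_1)),\, I_{P(\sigma_2)}(\St^{P(\sigma_2)}_{Q_2}(\sigma_2)))
\simeq
\Ext^i_{\mathcal{H}_{P(\sigma_1)}}(\St^{P(\sigma_1)}_{Q_1}(\sigma_1),\, R_{P(\sigma_1)}I_{P(\sigma_2)}(\St^{P(\sigma_2)}_{Q_2}(\sigma_2))).
\]
Next I would invoke the geometric-lemma-type description of $R_{P(\sigma_1)}\circ I_{P(\sigma_2)}$ from \cite{arXiv:1612.01312}: a filtration whose graded pieces are indexed by double cosets $W_{0,P(\sigma_1)}\backslash W_0 / W_{0,P(\sigma_2)}$. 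For a non-identity double coset $w$ the graded piece is built from a $w$-twist of the underlying supersingular $\sigma_2$, which sits inside a Levi strictly different from $P = P_1 = P_2$; Lemma~\ref{lem:vanishing of extension by central character} (transferred to $\mathcal{H}_{P(\sigma_1)}$) then kills its Ext against $\St^{P(\sigma_1)}_{Q_1}(\sigma_1)$. The identity double coset contributes $I^{P(\sigma_1)}_{P(\sigma_1)\cap P(\sigma_2)}$ applied to the restriction of $\St^{P(\sigma_2)}_{Q_2}(\sigma_2)$ to $\mathcal{H}_{P(\sigma_1)\cap P(\sigma_2)}$.

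After one more adjunction, this time for $I^{P(\sigma_1)}_{P(\sigma_1)\cap P(\sigma_2)}$, everything lives inside $\mathcal{H}_{P(\sigma_1)\cap P(\sigma_2)}$. The final step is identification: applying the left adjoint termwise to the defining presentation
\[
\St^{P(\sigma_1)}_{Q_1}(\sigma_1) = \Coker\Bigl(\bigoplus_{Q_1'\supsetneq Q_1} I^{P(\sigma_1)}_{Q_1'}(e_{Q_1'}(\sigma_1)) \to I^{P(\sigma_1)}_{Q_1}(e_{Q_1}(\sigma_1))\Bigr)
\]
and using $\Delta(\sigma_1)\subset \Delta_{Q_1}\cup \Delta(\sigma_2)$ should collapse the transform to $\St^{P(\sigma_1)\cap P(\sigma_2)}_{Q_1\cap P(\sigma_2)}(\sigma_1)$, while $Q_2\subset P(\sigma_1)$ makes the restriction of $\St^{P(\sigma_2)}_{Q_2}(\sigma_2)$ naturally the Steinberg $\St^{P(\sigma_1)\cap P(\sigma_2)}_{Q_2}(\sigma_2)$. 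If either hypothesis fails, the same analysis forces one of the two sides to vanish, giving the second clause.

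The hard part will be the geometric-lemma step: verifying enough acyclicity so that $R_{P(\sigma_1)}$ itself (not its derived functor) computes the Ext groups on the modules in question, and carrying out the bookkeeping that shows every non-identity double coset yields zero on Ext. After that, the combinatorial identification of restrictions of generalized Steinberg modules with other generalized Steinberg modules, and the precise way the two hypotheses $Q_2\subset P(\sigma_1)$ and $\Delta(\sigma_1)\subset \Delta_{Q_1}\cup\Delta(\sigma_2)$ enter the $\Coker$ presentation, is delicate but routine to carry out.
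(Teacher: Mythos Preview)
Your overall architecture matches the paper's: pass to $R_{P(\sigma_1)}$ on the right, then to $L^{P(\sigma_1)}_{P(\sigma_1)\cap P(\sigma_2)}$ on the left, and identify what comes out. But the central step you propose---a Mackey/geometric-lemma filtration of $R_{P(\sigma_1)}\circ I_{P(\sigma_2)}$ with double cosets in $W_{0,P(\sigma_1)}\backslash W_0/W_{0,P(\sigma_2)}$ and a central-character argument to kill the nontrivial pieces---is not what the paper does, and it is the weak point of your sketch.

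The paper bypasses the filtration entirely. It invokes \cite[Theorem~5.20]{arXiv:1612.01312}, which computes $R_{P(\sigma_1)}$ directly on the \emph{simple} module $I(P,\sigma_2,Q_2)$: the answer is $0$ if $Q_2\not\subset P(\sigma_1)$, and otherwise it is again a single simple module $I_{P(\sigma_1)}(P,\sigma_2,Q_2)=I^{P(\sigma_1)}_{P(\sigma_1)\cap P(\sigma_2)}(\St^{P(\sigma_1)\cap P(\sigma_2)}_{Q_2}(\sigma_2))$. Similarly, after the second adjunction the paper invokes \cite[Proposition~5.10, Proposition~5.18]{arXiv:1612.01312}, which compute $L^{P(\sigma_1)}_{P(\sigma_1)\cap P(\sigma_2)}$ directly on $\St^{P(\sigma_1)}_{Q_1}(\sigma_1)$: zero unless $\Delta(\sigma_1)=\Delta_{Q_1}\cup\Delta_{P(\sigma_1)\cap P(\sigma_2)}$, and $\St^{P(\sigma_1)\cap P(\sigma_2)}_{Q_1\cap P(\sigma_2)}(\sigma_1)$ in that case. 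The short combinatorial check that $\Delta(\sigma_1)=\Delta_{Q_1}\cup\Delta_{P(\sigma_1)\cap P(\sigma_2)}\Longleftrightarrow \Delta(\sigma_1)\subset\Delta_{Q_1}\cup\Delta(\sigma_2)$ finishes the proof. No filtration, no long exact sequences, no appeal to Lemma~\ref{lem:vanishing of extension by central character} on subquotients.

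Your route is in principle plausible but has real gaps as written. A filtration on the second variable only gives information on $\Ext^i$ if you control $\Ext$ in \emph{all} degrees on the graded pieces, not just degree $i$; you would need vanishing of $\Ext^j$ against every nontrivial-coset subquotient for every $j$, and Lemma~\ref{lem:vanishing of extension by central character} as stated applies to simple modules, so you would have to check that each such subquotient really is (or has a filtration by) modules of the form $I(P',\sigma',Q')$ with $P'\ne P$ inside $\mathcal{H}_{P(\sigma_1)}$. Also, the $\Ext$-level adjunction $\Ext^i_{\mathcal{H}}(I_P(\sigma),\pi)\simeq\Ext^i_{\mathcal{H}_P}(\sigma,R_P(\pi))$ is not a formality, since $R_P$ is not known to be exact; the paper establishes this separately (Proposition~\ref{prop:extension and right adjoint}) via duality and an auxiliary exact functor $L'_{P'}$, and you should cite or reproduce that argument rather than gesture at ``enough acyclicity.''
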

Hence for the calculation of the extension, it is sufficient to calculate the extensions between generalized Steinberg modules.
For an $\mathcal{H}$-module $\pi$, set $\pi^* = \Hom_C(\pi,C)$.
The right $\mathcal{H}$-module structure on $\pi^*$ is given by $(fX)(v) = f(v\zeta(X))$ for $f\in \pi^*$, $v\in \pi$ and $X\in \mathcal{H}$.
Here the anti-involution $\zeta\colon \mathcal{H}\to \mathcal{H}$ is defined by $\zeta(T_w) = T_{w^{-1}}$.

\begin{lem}\label{lem:ext and dual}
We have $\Ext^i_{\mathcal{H}}(\pi_1,\pi_2^*)\simeq\Ext^i_{\mathcal{H}}(\pi_2,\pi_1^*)$.
In particular, if $\pi_1$ or $\pi_2$ is finite-dimensional, then $\Ext^i_{\mathcal{H}}(\pi_1,\pi_2)\simeq \Ext^i_{\mathcal{H}}(\pi_2^*,\pi_1^*)$.
\end{lem}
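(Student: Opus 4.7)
The plan is to establish the Hom-level isomorphism directly from the anti-involution $\zeta$, then upgrade it to $\Ext^i$ by showing that the $C$-dual of a projective is injective.

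First I would verify the claim for $i=0$. Given an $\mathcal{H}$-linear map $\varphi\colon\pi_1\to\pi_2^*$, set $B_\varphi(v_1,v_2)=\varphi(v_1)(v_2)$. The definition of the right $\mathcal{H}$-action on $\pi_2^*$ via $\zeta$ translates $\mathcal{H}$-linearity of $\varphi$ into the invariance condition $B_\varphi(v_1X,v_2)=B_\varphi(v_1,v_2\zeta(X))$ for all $X\in\mathcal{H}$. Since $\zeta^2=\id$, this condition is symmetric in $\pi_1$ and $\pi_2$, so $B_\varphi$ equally well defines an $\mathcal{H}$-linear map $\pi_2\to\pi_1^*$; this gives a natural isomorphism $\Hom_{\mathcal{H}}(\pi_1,\pi_2^*)\simeq\Hom_{\mathcal{H}}(\pi_2,\pi_1^*)$.

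Next I would promote this to higher degrees. Pick a projective resolution $P_\bullet\to\pi_1$ of right $\mathcal{H}$-modules. Since $C$ is a field, the contravariant functor $(-)^*=\Hom_C(-,C)$ is exact, so $0\to\pi_1^*\to P_0^*\to P_1^*\to\cdots$ is exact. To see each $P_n^*$ is injective, apply the Hom-level isomorphism with the first variable running: $\Hom_{\mathcal{H}}(-,P_n^*)\simeq\Hom_{\mathcal{H}}(P_n,(-)^*)$, which is a composition of exact functors (projectivity of $P_n$ and exactness of $(-)^*$) and is therefore exact. Hence $P_\bullet^*$ is an injective coresolution of $\pi_1^*$, and applying the Hom isomorphism termwise yields
\[
\Ext^i_{\mathcal{H}}(\pi_1,\pi_2^*)=H^i\Hom_{\mathcal{H}}(P_\bullet,\pi_2^*)\simeq H^i\Hom_{\mathcal{H}}(\pi_2,P_\bullet^*)=\Ext^i_{\mathcal{H}}(\pi_2,\pi_1^*).
\]

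For the finite-dimensional case, the evaluation map $\pi\to\pi^{**}$ is an $\mathcal{H}$-linear isomorphism whenever $\pi$ is finite-dimensional (a direct check using $\zeta^2=\id$). If $\pi_2$ is finite-dimensional, apply the first isomorphism with $\pi_2$ replaced by $\pi_2^*$ to obtain
\[
\Ext^i_{\mathcal{H}}(\pi_1,\pi_2)\simeq\Ext^i_{\mathcal{H}}(\pi_1,\pi_2^{**})\simeq\Ext^i_{\mathcal{H}}(\pi_2^*,\pi_1^*);
\]
the case where $\pi_1$ is finite-dimensional is symmetric. The only subtle point is the injectivity of $P^*$ for projective $P$, which relies crucially on $C$ being a field (so that $(-)^*$ is exact); the rest is a formal manipulation of the adjunction provided by $\zeta$.
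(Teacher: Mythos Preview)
Your proof is correct and follows essentially the same approach as the paper: identify $\Hom_{\mathcal{H}}(\pi_1,\pi_2^*)$ and $\Hom_{\mathcal{H}}(\pi_2,\pi_1^*)$ with the same space of $\zeta$-balanced bilinear forms, deduce that duals of projectives are injective, and then pass to higher $\Ext$ via a projective resolution (you resolve $\pi_1$ whereas the paper resolves $\pi_2$, but by symmetry this is immaterial). The finite-dimensional case via the double dual is handled identically.
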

\begin{proof}
We have the isomorphism for $i = 0$ since both sides are equal to $\{f\colon \pi_1\times\pi_2\to C\mid f(x_1X,x_2) = f(x_1,\zeta(X)x_2)\ (x_1\in\pi_1,x_2\in\pi_2,X\in \mathcal{H})\}$.
Hence, in particular, if $\pi$ is projective, then $\pi^*$ is injective.
Let $\dots \to P_1\to P_0\to \pi_2\to 0$ be a projective resolution.
Then $\Ext^i_{\mathcal{H}}(\pi_2,\pi_1^*)$ is a $i$-th cohomology of the complex $\Hom(P_i,\pi_1^*)\simeq \Hom(\pi_1,P_i^*)$.
Since $0\to \pi_2^*\to P_0^*\to P_1^*\to \cdots$ is an injective resolution of $\pi_2^*$, this is $\Ext^i_{\mathcal{H}}(\pi_1,\pi_2^*)$.

If $\pi_2$ is finite-dimensional, then $\pi_2 \simeq (\pi_2^*)^*$.
Hence we have $\Ext^i_{\mathcal{H}}(\pi_1,\pi_2)\simeq\Ext^i_{\mathcal{H}}(\pi_1,(\pi_2^*)^*)\simeq\Ext^i_{\mathcal{H}}(\pi_2^*,\pi_1^*)$.
By the same argument, we have $\Ext^i_{\mathcal{H}}(\pi_1,\pi_2)\simeq \Ext^i_{\mathcal{H}}(\pi_2^*,\pi_1^*)$ if $\pi_1$ is finite-dimensional.
\end{proof}

\begin{prop}\label{prop:extension and right adjoint}
Let $P$ be a parabolic subgroup , $\pi$ an  $\mathcal{H}$-module and $\sigma$ an $\mathcal{H}_P$-module.
\begin{enumerate}
\item We have $\Ext^i_{\mathcal{H}}(\pi,I_P(\sigma))\simeq \Ext^i_{\mathcal{H}_P}(L_P(\pi),\sigma)$.
\item We have $\Ext^i_{\mathcal{H}}(I_P(\sigma),\pi^*)\simeq \Ext^i_{\mathcal{H}_P}(\sigma,R_P(\pi^*))$.
In particular, if $\pi$ is finite-dimensional, then $\Ext^i_{\mathcal{H}}(I_P(\sigma),\pi)\simeq \Ext^i_{\mathcal{H}_P}(\sigma,R_P(\pi))$.
\end{enumerate}
\end{prop}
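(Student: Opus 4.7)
The plan is to derive the Hom-level adjunctions
\[
\Hom_{\mathcal{H}}(\pi, I_P(\sigma)) \simeq \Hom_{\mathcal{H}_P}(L_P(\pi), \sigma),\qquad
\Hom_{\mathcal{H}}(I_P(\sigma), M) \simeq \Hom_{\mathcal{H}_P}(\sigma, R_P(M))
\]
to $\Ext$, by producing in each case a resolution compatible with the relevant adjoint pair.

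For (1) I would take a projective resolution $P_\bullet \to \pi$ in the category of right $\mathcal{H}$-modules. Because $L_P$ is left adjoint to the exact functor $I_P$, it sends projectives to projectives: for projective $P$, the functor $\Hom_{\mathcal{H}_P}(L_P(P),-) \simeq \Hom_{\mathcal{H}}(P, I_P(-))$ is a composition of two exact functors. Combined with the exactness of $L_P$ itself (recorded in the preliminaries), $L_P(P_\bullet) \to L_P(\pi)$ is a projective resolution of $L_P(\pi)$. Applying $\Hom_{\mathcal{H}_P}(-, \sigma)$ termwise and using the $(L_P, I_P)$-adjunction gives an isomorphism of cochain complexes; passing to $H^i$ gives (1).

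For (2) I would take a projective resolution $Q_\bullet \to \sigma$ in $\mathcal{H}_P$-mod. Since $I_P$ is exact, $I_P(Q_\bullet) \to I_P(\sigma)$ is a resolution in $\mathcal{H}$-mod. The additional input needed is that $I_P$ preserves projectives, equivalently that $R_P$ is exact; granted this, $I_P(Q_\bullet) \to I_P(\sigma)$ is a projective resolution, and the $(I_P, R_P)$-adjunction yields termwise $\Hom_{\mathcal{H}}(I_P(Q_\bullet), \pi^*) \simeq \Hom_{\mathcal{H}_P}(Q_\bullet, R_P(\pi^*))$. Taking $H^i$ gives the main statement of (2). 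The ``in particular'' case then follows by applying the proven identity with $\pi^*$ in place of $\pi$ and invoking $(\pi^*)^* \simeq \pi$ for finite-dimensional $\pi$.

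The only non-formal ingredient is the exactness of $R_P$ (equivalently, that $I_P$ preserves projectives). The preliminaries explicitly record the exactness of $L_P$ but not of $R_P$; I would either extract this from the concrete description of $R_P$ as a $\Hom$-functor, or invoke an identification of $R_P$ with a twist of the left adjoint for the opposite parabolic induction, whose exactness is a parallel fact to that of $L_P$. With this in hand both parts are formal consequences of the adjunctions and their compatibility with resolutions.
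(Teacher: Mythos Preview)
Your argument for (1) matches the paper's: both use the exactness of $I_P$ and $L_P$ to promote the $(L_P,I_P)$ adjunction to $\Ext$.

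For (2) your route differs from the paper's. You reduce everything to the exactness of $R_P$ (equivalently, $I_P$ preserving projectives), which you correctly flag as not recorded in the preliminaries. The paper sidesteps this by passing through duality: it applies Lemma~\ref{lem:ext and dual} to swap the two arguments, then invokes the formula $I_P(\sigma)^* \simeq I'_{P'}(n_{w_Gw_P}\sigma^*_{\ell-\ell_P})$ from \cite[Proposition~4.2]{arXiv:1704.00407}, where $I'_{P'}$ is the variant parabolic induction built from $j_{P'}^-$ rather than $j_{P'}^{-*}$. This variant has an exact left adjoint $L'_{P'}(\pi)=\pi\otimes_{(\mathcal{H}_{P'}^-,j_{P'}^-)}\mathcal{H}_{P'}$ (exact by the same localization argument as for $L_P$), so the adjunction derives exactly as in (1). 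After a second application of Lemma~\ref{lem:ext and dual} and a twist by $(\cdot)_{\ell-\ell_P}$ one arrives at $\Ext^i_{\mathcal{H}}(I_P(\sigma),\pi^*)\simeq\Ext^i_{\mathcal{H}_P}(\sigma,(n_{w_Gw_P}^{-1}L'_{P'}(\pi))^*_{\ell-\ell_P})$; specializing to $i=0$ and comparing with the defining adjunction of $R_P$ then identifies $(n_{w_Gw_P}^{-1}L'_{P'}(\pi))^*_{\ell-\ell_P}$ with $R_P(\pi^*)$. In other words, the paper obtains the identification of $R_P$ with a twist of an exact functor --- precisely your second suggested fix --- as a \emph{byproduct} of the $\Ext$ computation rather than as a prerequisite. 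Your approach is conceptually more direct and, once the exactness of $R_P$ is established, would even give $\Ext^i_{\mathcal{H}}(I_P(\sigma),M)\simeq\Ext^i_{\mathcal{H}_P}(\sigma,R_P(M))$ for arbitrary $M$; but to make it self-contained you would need to prove that identification at the $\Hom$ level first, which is essentially the content of the cited result from \cite{arXiv:1704.00407} together with Lemma~\ref{lem:ext and dual}.
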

\begin{proof}
The exactness of $I_P$ and $L_P$ implies (1).

Put $P' = n_{w_Gw_P}\opposite{P}n_{w_Gw_P}^{-1}$
Define the functor $I'_{P'}$ by
\[
I'_{P'}(\sigma') = \Hom_{(\mathcal{H}_{P'}^-,j_{P'}^-)}(\mathcal{H},\sigma')
\]
for an $\mathcal{H}_{P'}$-module $\sigma'$.
Then this has the left adjoint functor $L'_{P'}$ defined by $L'_{P'}(\pi) = \pi\otimes_{(\mathcal{H}_{P'}^-,j_{P'}^-)}\mathcal{H}_{P'}$.
This is exact since $\mathcal{H}_{P'}$ is a localization of $\mathcal{H}_{P'}^-$ by Proposition~\ref{prop:localization as Levi subalgebra}.
Set $\sigma_{\ell - \ell_P}(T^P_w) = (-1)^{\ell(w) - \ell_P(w)}\sigma(T_w)$~\cite[4.1]{arXiv:1612.01312}.
Using \cite[Proposition~4.2]{arXiv:1704.00407}, for an $\mathcal{H}_P$-module $\sigma$, we have
\begin{align*}
\Ext^i_{\mathcal{H}}(I_P(\sigma),\pi^*)
& \simeq\Ext^i_{\mathcal{H}}(\pi,I_P(\sigma)^*)\\
& \simeq\Ext^i_{\mathcal{H}}(\pi,I'_{P'}(n_{w_Gw_P}\sigma^*_{\ell - \ell_P}))\\
& \simeq \Ext^i_{\mathcal{H}_P}(n_{w_Gw_P}^{-1}L'_{P'}(\pi),\sigma^*_{\ell - \ell_P})\\
& \simeq\Ext^i_{\mathcal{H}_P}(\sigma_{\ell - \ell_P},n_{w_Gw_P}^{-1}L'_{P'}(\pi)^*)\\
& \simeq \Ext^i_{\mathcal{H}_P}(\sigma,(n_{w_Gw_P}^{-1}L'_{P'}(\pi))^*_{\ell - \ell_P}).
\end{align*}
Put $i = 0$.
Then we get $(n_{w_Gw_P}^{-1}L'_{P'}(\pi))^*_{\ell - \ell_P} \simeq R_P(\pi^*)$ by $\Hom_{\mathcal{H}}(I_P(\sigma),\pi^*)\simeq \Hom_{\mathcal{H}_P}(\sigma,R_P(\pi^*))$.
Hence we get (2).

If $\pi$ is finite-dimensional, then $\pi = (\pi^*)^*$.
Hence we get $\Ext^i_{\mathcal{H}}(I_P(\sigma),\pi)\simeq \Ext^i_{\mathcal{H}_P}(\sigma,R_P(\pi))$ applying (3) to $\pi^*$.
\end{proof}

\begin{proof}[Proof of Proposition~\ref{prop:removing parabolic induction}]
Since $I(P,\sigma_2,Q_2)$ is finite-dimensional, we have
\[
\Ext^i_{\mathcal{H}}(I(P,\sigma_1,Q_1),I(P,\sigma_2,Q_2)) 
= \Ext^i_{\mathcal{H}_{P(\sigma_1)}}(\St^{P(\sigma_1)}_{Q_1}(\sigma_1),R_{P(\sigma_1)}(I(P,\sigma_2,Q_2))).
\]
We have $R_{P(\sigma_1)}(I(P,\sigma_2,Q_2)) = 0$ if $Q_2\not\subset P(\sigma_1)$ by \cite[Theorem~5.20]{arXiv:1612.01312}.
If $Q_2\subset P(\sigma_1)$, then $R_{P(\sigma_1)}(I(P,\sigma_2,Q_2)) = I_{P(\sigma)}(P,\sigma,Q_2)$.
Hence the extension group is isomorphic to
\begin{align*}
& \Ext^i_{\mathcal{H}_{P(\sigma_1)}}(\St^{P(\sigma_1)}_{Q_1}(\sigma_1),I_{P(\sigma_1)}(P,\sigma_2,Q_2))\\
& = \Ext^i_{\mathcal{H}_{P(\sigma_1)}}(\St^{P(\sigma_1)}_{Q_1}(\sigma_1),I_{P(\sigma_2)\cap P(\sigma_1)}^{P(\sigma_1)}(\St_{Q_2}(\sigma_2)))\\
& = \Ext^i_{\mathcal{H}_{P(\sigma_1)\cap P(\sigma_2)}}(L_{P(\sigma_2)\cap P(\sigma_1)}^{P(\sigma_1)}(\St^{P(\sigma_1)}_{Q_1}(\sigma_1)),\St^{P(\sigma_1)\cap P(\sigma_2)}_{Q_2}(\sigma_2))
\end{align*}
We have $L_{P(\sigma_2)\cap P(\sigma_1)}^{P(\sigma_1)}(\St^{P(\sigma_1)}_{Q_1}(\sigma_1)) = 0$ if $\Delta(\sigma_1) \ne \Delta(Q_1)\cup \Delta_{P(\sigma_1)\cap P(\sigma_2)}$ or $P\not\subset P(\sigma_1)\cap P(\sigma_2)$ by \cite[Proposition~5.10, Proposition~5.18]{arXiv:1612.01312}.
If it is not zero, then the extension group is isomorphic to 
\[
\Ext^i_{\mathcal{H}_{P(\sigma_1)\cap P(\sigma_2)}}(\St^{P(\sigma_1)\cap P(\sigma_2)}_{Q_1\cap P(\sigma_2)}(\sigma_1),\St^{P(\sigma_1)\cap P(\sigma_2)}_{Q_2}(\sigma_2)).
\]
This holds if $Q_2\subset P(\sigma_1)$, $\Delta(\sigma_1) = \Delta(Q_1)\cup \Delta_{P(\sigma_1)\cap P(\sigma_2)}$ and $P\subset P(\sigma_1)\cap P(\sigma_2)$ and, otherwise the extension group is zero.
Note that we always have $P\subset P(\sigma_1)\cap P(\sigma_2)$ since both $P(\sigma_1)$ and $P(\sigma_2)$ contain $P$.
Since $Q_1\subset P(\sigma_1)$, $\Delta(Q_1)\cup \Delta_{P(\sigma_1)\cap P(\sigma_2)} = (\Delta(\sigma_1)\cap \Delta(Q_1))\cup (\Delta(\sigma_1)\cap \Delta(\sigma_2)) = \Delta(\sigma_1)\cap (\Delta_{Q_1}\cup \Delta(\sigma_2))$. (Recall that $P(\sigma_1)$ is the parabolic subgroup corresponding to $\Delta(\sigma_1)$.)
Hence we have $\Delta(\sigma_1) = \Delta(Q_1)\cup \Delta(P(\sigma_1)\cap P(\sigma_2))$ if and only if $\Delta(\sigma_1)\subset \Delta_{Q_1}\cup \Delta(\sigma_2)$.
We get the proposition.
\end{proof}
Therefore, to calculate the extension groups, we may assume $P(\sigma_1) = P(\sigma_2) = G$.

\subsection{Extensions between generalized Steinberg modules}
We assume that $P(\sigma_1) = P(\sigma_2) = G$ and we continue the calculation of the extension groups.
\begin{lem}\label{lem:Ext of Steinberg, first step}
Let $Q_{11},Q_{12},Q_2$ be parabolic subgroups and $\alpha\in\Delta_{Q_{12}}$ such that $\Delta_{Q_{11}} = \Delta_{Q_{12}}\setminus\{\alpha\}$.
Then we have
\[
\Ext^i_{\mathcal{H}}(\St_{Q_{11}}(\sigma_1),\St_{Q_2}(\sigma_2))
\simeq
\begin{cases}
\Ext^{i - 1}_{\mathcal{H}}(\St_{Q_{12}}(\sigma_1),\St_{Q_2}(\sigma_2)) & (\alpha\in\Delta_{Q_2}),\\
\Ext^{i + 1}_{\mathcal{H}}(\St_{Q_{12}}(\sigma_1),\St_{Q_2}(\sigma_2)) & (\alpha\notin\Delta_{Q_2}).
\end{cases}
\]
\end{lem}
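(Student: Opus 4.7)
The strategy is to construct a natural short exact sequence relating $\St_{Q_{11}}(\sigma_1)$ and $\St_{Q_{12}}(\sigma_1)$ whose middle term is parabolically induced from $\mathcal{H}_{Q_{12}}$, apply $\Ext^\bullet_\mathcal{H}(-,\St_{Q_2}(\sigma_2))$, and then use Proposition~\ref{prop:extension and right adjoint}(2) to reduce the Ext of the middle term to a computation over $\mathcal{H}_{Q_{12}}$ that vanishes in the required degrees. The dichotomy $\alpha\in\Delta_{Q_2}$ versus $\alpha\notin\Delta_{Q_2}$ should then be exactly what selects the case in which the middle-term Ext is zero and hence the long exact sequence collapses to a degree-shift isomorphism.

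First, since $\Delta_{Q_{12}}\setminus\Delta_{Q_{11}}=\{\alpha\}$, the partial Steinberg $\St_{Q_{11}}^{Q_{12}}(\sigma_1)$ on $\mathcal{H}_{Q_{12}}$ sits in a short exact sequence
\[
0\to e_{Q_{12}}(\sigma_1)\to I_{Q_{11}}^{Q_{12}}(e_{Q_{11}}(\sigma_1))\to \St_{Q_{11}}^{Q_{12}}(\sigma_1)\to 0.
\]
Applying the exact functor $I_{Q_{12}}$ and modding out by the images of the remaining parabolic inductions $I_{Q'}(e_{Q'}(\sigma_1))$ appearing in the defining cokernel of $\St_{Q_{11}}(\sigma_1)$ (those with $Q'\supsetneq Q_{11}$ but $Q'\not\supset Q_{12}$) should produce a short exact sequence of the form
\[
0\to \St_{Q_{12}}(\sigma_1)\to \St_{Q_{11}}(\sigma_1)\to I_{Q_{12}}(N)\to 0,
\]
with a companion (dual) sequence with the roles of $\St_{Q_{11}}(\sigma_1)$ and $\St_{Q_{12}}(\sigma_1)$ exchanged, obtainable either directly or by invoking Lemma~\ref{lem:ext and dual} on the linear dual picture. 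Here $N$ is built from $\St_{Q_{11}}^{Q_{12}}(\sigma_1)$ and extensions of $\sigma_1$ to intermediate Levi subalgebras.

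Finally, applying $\Ext^\bullet_\mathcal{H}(-,\St_{Q_2}(\sigma_2))$ and using Proposition~\ref{prop:extension and right adjoint}(2) (which applies because $\St_{Q_2}(\sigma_2)$ is finite-dimensional), the Ext of the middle term is identified with $\Ext^\bullet_{\mathcal{H}_{Q_{12}}}(N,R_{Q_{12}}(\St_{Q_2}(\sigma_2)))$, and the right adjoint $R_{Q_{12}}$ applied to a generalized Steinberg is computed by \cite[Theorem~5.20]{arXiv:1612.01312} purely in terms of the combinatorics of $\Delta_{Q_{12}}$ and $\Delta_{Q_2}$ — the same input already used in the proof of Proposition~\ref{prop:removing parabolic induction}. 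The combinatorial criterion splits along the dichotomy $\alpha\in\Delta_{Q_2}$ versus $\alpha\notin\Delta_{Q_2}$: in one of the two cases the middle Ext vanishes outright and the long exact sequence collapses to the isomorphism with shift $i\leftrightarrow i-1$; in the other case the companion short exact sequence (or dualization via Lemma~\ref{lem:ext and dual}) gives the opposite-signed shift $i\leftrightarrow i+1$. The main obstacle is the bookkeeping: correctly identifying $N$, verifying that the two halves of the dichotomy do correspond to the vanishing/non-vanishing required, and ensuring that the sign of the shift tracks the direction of the connecting homomorphism in each case. Given the machinery already developed in \cite{arXiv:1612.01312} and the finite-dimensionality of the Steinbergs, both cases should then be essentially formal.
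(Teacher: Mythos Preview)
Your plan has the right architecture (short exact sequence, long exact sequence in $\Ext$, kill the middle term via an adjoint), but two concrete choices are wrong and make the argument fail as written.

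First, the displayed sequence $0\to \St_{Q_{12}}(\sigma_1)\to \St_{Q_{11}}(\sigma_1)\to I_{Q_{12}}(N)\to 0$ cannot exist: under the standing hypothesis $P(\sigma_1)=G$ the module $\St_{Q_{11}}(\sigma_1)$ is \emph{simple}, so it has no nontrivial sub-and-quotient pair. The induced module must sit in the middle, not on the outside. The correct sequence (which takes real work to establish, via the snake lemma and a careful lattice computation with the submodules $I_Q(e_Q(\sigma_1))$) is
\[
0\longrightarrow \St_{Q_{12}}(\sigma_1)\longrightarrow I_{P_1}\bigl(\St_{Q_{11}}^{P_1}(\sigma_1)\bigr)\longrightarrow \St_{Q_{11}}(\sigma_1)\longrightarrow 0,
\]
where $P_1$ is the \emph{maximal} parabolic with $\Delta_{P_1}=\Delta\setminus\{\alpha\}$. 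Inducing from $Q_{12}$ is the wrong move: since $\alpha\in\Delta_{Q_{12}}$, when $\alpha\in\Delta_{Q_2}$ one also has $\alpha\in\Delta_{Q_2\cap Q_{12}}$, so the vanishing criterion $\Delta_{Q_2}\ne\Delta_{Q_2\cap R}\cup\Delta_P$ from \cite[Proposition~5.11]{arXiv:1612.01312} is \emph{not} triggered for $R=Q_{12}$ and you get no control on $R_{Q_{12}}(\St_{Q_2}(\sigma_2))$. With $R=P_1$ the criterion fires exactly when $\alpha\in\Delta_{Q_2}$, which is what collapses the long exact sequence.

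Second, for the case $\alpha\notin\Delta_{Q_2}$ your ``companion dual sequence'' via Lemma~\ref{lem:ext and dual} does not do what you need: duality sends $\St_Q(\sigma)$ to $\St_{Q'}(\sigma^*)$ with $Q'$ the $w_G$-twist of $Q$, which does not flip membership of $\alpha$ in $\Delta_{Q_2}$. The paper instead applies the involution $\iota$ (so $\St_Q(\sigma)^\iota\simeq\St_{Q^c}(\sigma_{\ell-\ell_P}^{\iota_P})$ with $\Delta_{Q^c}=(\Delta\setminus\Delta_Q)\cup\Delta_P$), which genuinely swaps the condition $\alpha\in\Delta_{Q_2}$ with $\alpha\notin\Delta_{Q_2}$ and reduces the second case to the first.
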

\begin{proof}
Let $P_1$ be a parabolic subgroup corresponding to $\Delta\setminus\{\alpha\}$.
First we prove that there exists an exact sequence
\begin{equation}\label{eq:rank 1 induction}
0\to \St_{Q_{12}}(\sigma_1)\to I_{P_1}(\St_{Q_{11}}^{P_1}(\sigma_1))\to \St_{Q_{11}}(\sigma_1)\to 0.
\end{equation}

We start with the following exact sequence.
\[
0\to \sum_{P_1\supset Q\supsetneq Q_{11}}I^{P_1}_{Q}(e_{Q}(\sigma_1))\to I_{Q_{11}}^{P_1}(e_{Q_{11}}(\sigma_1))\to \St_{Q_{11}}^{P_1}(\sigma_1)\to 0,
\]
Apply $I_{P_1}$ to this exact sequence.
Then we have
\[
0\to \sum_{P_1\supset Q\supsetneq Q_{11}}I_{Q}(e_{Q}(\sigma_1))\to I_{Q_{11}}(e_{Q_{11}}(\sigma_1))\to I_{P_1}(\St_{Q_{11}}^{P_1}(\sigma_1))\to 0.
\]
Hence we get the following commutative diagram with exact columns:
\[
\begin{tikzcd}
0\arrow{d} & 0\arrow{d}\\
\sum_{P_1\supset Q\supsetneq Q_{11}}I_{Q}(e_{Q}(\sigma_1))\arrow{d}\arrow[hookrightarrow]{r} & \sum_{Q\supsetneq Q_{11}}I_{Q}(e_Q(\sigma))\arrow{d}\\
I_{Q_{11}}(e_{Q_{11}}(\sigma_1)) \arrow[-,double line]{r}\arrow{d}& I_{Q_{11}}(e_{Q_{11}}(\sigma_1))\arrow{d}\\
I_{P_1}(\St_{Q_{11}}^{P_1}(\sigma_1))\arrow{r}\arrow{d} & \St_{Q_{11}}(\sigma_1)\arrow{d}\\
0 & 0
\end{tikzcd}
\]
Hence $I_{P_1}(\St_{Q_{11}}^{P_1}(\sigma_1))\to \St_{Q_{11}}(\sigma_1)$ is surjective and the kernel is isomorphic to
\[
\left.\sum_{Q\supsetneq Q_{11}}I_{Q}(e_Q(\sigma))\middle/\sum_{P_1\supset Q\supsetneq Q_{11}}I_{Q}(e_{Q}(\sigma_1))\right.
\]
by the snake lemma.

We prove:
\begin{enumerate}
\item $I_{Q_{12}}(e_{Q_{12}}(\sigma_1)) + \sum_{P_1\supset Q\supsetneq Q_{11}}I_{Q}(e_{Q}(\sigma_1)) = \sum_{Q\supsetneq Q_{11}}I_{Q}(e_Q(\sigma))$.
\item $I_{Q_{12}}(e_{Q_{12}}(\sigma_1)) \cap \sum_{P_1\supset Q\supsetneq Q_{11}}I_{Q}(e_{Q}(\sigma_1)) = \sum_{Q\supsetneq Q_{12}}I_{Q}(e_Q(\sigma))$.
\end{enumerate}
We prove (1).
Since $Q_{12}\supsetneq Q_{11}$, $I_{Q_{12}}(e_{Q_{12}}(\sigma_1))$ is contained in the right hand side.
Obviously $\sum_{P_1\supset Q\supsetneq Q_{11}}I_{Q}(e_{Q}(\sigma_1))$ is also contained in the right hand side.
Hence $I_{Q_{12}}(e_{Q_{12}}(\sigma_1)) + \sum_{P_1\supset Q\supsetneq Q_{11}}I_{Q}(e_{Q}(\sigma_1)) \subset \sum_{Q\supsetneq Q_{11}}I_{Q}(e_Q(\sigma))$.
Take $Q_\supsetneq Q_{11}$ and we prove that $I_Q(e_Q(\sigma_1))\subset I_{Q_{12}}(e_{Q_{12}}(\sigma_1)) + \sum_{P_1\supset Q\supsetneq Q_{11}}I_{Q}(e_{Q}(\sigma_1))$.
If $P_1\supset Q$, then it is obvious.
We assume that $P_1\not\supset Q$.
Since $\Delta_{P_1} = \Delta\setminus\{\alpha\}$, this is equivalent to $\alpha\in\Delta_Q$.
Hence $\Delta_Q\supset \Delta_{Q_{11}}\cup \{\alpha\} = \Delta_{Q_{12}}$.
Therefore we have $Q\supset Q_{12}$.
Hence $I_{Q}(e_{Q}(\sigma_1))\subset I_{Q_{12}}(e_{Q_{12}}(\sigma_1))$.

We prove (2).
By \cite[Lemma~3.10]{arXiv:1704.00407}, the left hand side is 
\[
\sum_{P_1\supset Q\supsetneq Q_{11}}I_{\langle Q,Q_{12}\rangle}(e_{\langle Q,Q_{12}\rangle}(\sigma_1))
\]
where $\langle Q,Q_{12}\rangle$ is the subgroup generated by $Q$ and $Q_{12}$.
We prove
\[
\{\langle Q,Q_{12}\rangle\mid P_1\supset Q\supsetneq Q_{11}\} = \{Q\mid Q\supsetneq Q_{12}\}.
\]
If $Q$ satisfies $P_1\supset Q\supsetneq Q_{11}$, then there exists $\beta\in \Delta_Q\setminus \Delta_{Q_{11}}$.
We have $\beta\in \Delta_Q\subset \Delta_{P_1} = \Delta\setminus\{\alpha\}$.
Therefore we have $\beta\ne\alpha$.
Hence $\beta\notin\Delta_{Q_{11}}\cup \{\alpha\} = \Delta_{Q_{12}}$.
On the other hand, $\beta\in\Delta_Q\subset \Delta_{\langle Q,Q_{12}\rangle}$.
Namely we have $\beta\in\Delta_{\langle Q,Q_{12}\rangle}\setminus\Delta_{Q_{12}}$.
Obviously $\langle Q,Q_{12}\rangle\supset Q_{12}$.
Therefore we get $\langle Q,Q_{12}\rangle\supsetneq Q_{12}$.

On the other hand, assume that $Q\supsetneq Q_{12}$.
Then $\alpha\in \Delta_Q$ since $\alpha\in\Delta_{Q_{12}}$.
Let $Q'$ be the parabolic subgroup corresponding to $\Delta_Q\setminus\{\alpha\}$.
Then we have $\Delta_{Q'}\subset \Delta\setminus\{\alpha\} = \Delta_{P_1}$ and $\Delta_{Q'} = \Delta_Q\setminus\{\alpha\} \supsetneq \Delta_{Q_{12}}\setminus\{\alpha\} = \Delta_{Q_{11}}$.
Hence $P_1\supset Q'\supsetneq Q_{11}$.
We have $\Delta_{\langle Q',Q_{12}\rangle} = \Delta_{Q'}\cup \Delta_{Q_{12}} = (\Delta_Q\setminus\{\alpha\})\cup\Delta_{Q_{11}}\cup\{\alpha\} = \Delta_Q\cup\Delta_{Q_{11}}\cup\{\alpha\}$.
This is $\Delta_Q$ since $\Delta_Q\supset \Delta_{Q_{12}} = \Delta_{Q_{11}}\cup\{\alpha\}$.
Hence $Q = \langle Q',Q_{12}\rangle$.
We get the existence of the exact sequence~\eqref{eq:rank 1 induction}.

Assume that $\alpha\in\Delta_{Q_2}$.
Then $\alpha\in \Delta_{Q_2}$ and $\alpha\notin\Delta_{Q_2\cap P_1}$.
Hence $\Delta_{Q_2}\ne\Delta_{Q_2\cap P_1}\cup \Delta_P$.
Therefore $R_{P_1}(\St_{Q_2}(\sigma_2)) = 0$ by \cite[Proposition~5.11]{arXiv:1612.01312}.
We have an exact sequence
\begin{align*}
& \Ext^i_{\mathcal{H}}(I_{P_1}(\St_{Q_{11}}^{P_1}(\sigma_1)),\St_{Q_2}(\sigma_2))\to
\Ext^i_{\mathcal{H}}(\St_{Q_{12}}(\sigma_1),\St_{Q_2}(\sigma_2))\\
& \to \Ext^{i + 1}_{\mathcal{H}}(\St_{Q_{11}}(\sigma_1),\St_{Q_2}(\sigma_2))\to
\Ext^{i + 1}_{\mathcal{H}}(I_{P_1}(\St_{Q_{11}}^{P_1}(\sigma_1)),\St_{Q_2}(\sigma_2)).
\end{align*}
Since $R_{P_1}(\St_{Q_2}(\sigma_2)) = 0$, for any $j$ we have
\[
\Ext^j_{\mathcal{H}}(I_{P_1}(\St_{Q_{12}}(\sigma_1)),\St_{Q_2}(\sigma_2)) = \Ext^j_{\mathcal{H}}(\St_{Q_{12}}(\sigma_1),R_{P_1}(\St_{Q_2}(\sigma_2))) = 0.
\]
Therefore we get 
\[
\Ext^i_{\mathcal{H}}(\St_{Q_{12}}(\sigma_1),\St_{Q_2}(\sigma_2))
\simeq \Ext^{i + 1}_{\mathcal{H}}(\St_{Q_{11}}(\sigma_1),\St_{Q_2}(\sigma_2)).
\]

Next assume that $\alpha\notin\Delta_{Q_2}$.
Let $Q_{11}^c$ (resp,\ $Q_{12}^c,Q_2^c$) be the parabolic subgroup corresponding to $(\Delta\setminus\Delta_{Q_{11}})\cup\Delta_P$ (resp.\ $(\Delta\setminus\Delta_{Q_{12}})\cup\Delta_P,(\Delta\setminus\Delta_{Q_2})\cup\Delta_P$).
Let $\iota = \iota_G\colon \mathcal{H}\to \mathcal{H}$ be the involution defined by $\iota(T_w) = (-1)^{\ell(w)}T_w^*$ and set $\pi^\iota = \pi\circ \iota$ for an $\mathcal{H}$-module $\pi$.
Then we have
\begin{align*}
\Ext^i_{\mathcal{H}}(\St_{Q_{11}}(\sigma_1),\St_{Q_2}(\sigma_2))& \simeq
\Ext^i_{\mathcal{H}}((\St_{Q_{11}}(\sigma_1))^{\iota},(\St_{Q_2}(\sigma_2))^{\iota})\\
&\simeq  \Ext^i_{\mathcal{H}}(\St_{Q_{11}^c}(\sigma_{1,\ell - \ell_P}^{\iota_P}),\St_{Q_2^c}(\sigma_{2,\ell - \ell_P}^{\iota_P}))
\end{align*}
by \cite[Theorem~3.6]{arXiv:1704.00407}.
Now we have $\alpha\in \Delta_{Q_2^c}$.
Applying the lemma (where $Q_{11} = Q_{12}^c$ and $Q_{12} = Q_{11}^c$), we have
\begin{align*}
\Ext^i_{\mathcal{H}}(\St_{Q_{11}^c}(\sigma_{1,\ell - \ell_P}^{\iota_P}),\St_{Q_2^c}(\sigma_{2,\ell - \ell_P}^{\iota_P}))
& \simeq \Ext^{i - 1}_{\mathcal{H}}(\St_{Q_{12}^c}(\sigma_{1,\ell - \ell_P}^{\iota_P}),\St_{Q_2^c}(\sigma_{2,\ell - \ell_P}^{\iota_P}))\\
& \simeq \Ext^{i - 1}_{\mathcal{H}}((\St_{Q_{12}}(\sigma_1))^\iota,(\St_{Q_2}(\sigma_2))^\iota)\\
& \simeq \Ext^{i - 1}_{\mathcal{H}}(\St_{Q_{12}}(\sigma_1),\St_{Q_2}(\sigma_2)).
\end{align*}
We get the lemma.
\end{proof}

For sets $X,Y$, let $X\bigtriangleup Y = (X\setminus Y)\cup (Y\setminus X)$ be the symmetric difference.
\begin{thm}\label{thm:extension between steinbergs}
We have
\[
\Ext^i_{\mathcal{H}}(\St_{Q_1}(\sigma_1),\St_{Q_2}(\sigma_2)) \simeq \Ext^{i - \#(\Delta_{Q_1}\bigtriangleup\Delta_{Q_2})}_{\mathcal{H}}(e_G(\sigma_1),e_G(\sigma_2)).
\]
\end{thm}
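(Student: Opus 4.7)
The plan is to iterate Lemma~\ref{lem:Ext of Steinberg, first step} together with an analogous statement for varying the second argument, reducing the pair $(Q_1,Q_2)$ to $(G,G)$ and using that $\St_G(\sigma)=e_G(\sigma)$ (immediate from the definition, since the direct sum indexing $\St_G$ is empty).

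The key preliminary step is to establish a dual version of Lemma~\ref{lem:Ext of Steinberg, first step}: if $\Delta_{Q_{21}}=\Delta_{Q_{22}}\setminus\{\alpha\}$, then
\[
\Ext^i_{\mathcal{H}}(\St_{Q_1}(\sigma_1),\St_{Q_{21}}(\sigma_2))\simeq
\begin{cases}
\Ext^{i-1}_{\mathcal{H}}(\St_{Q_1}(\sigma_1),\St_{Q_{22}}(\sigma_2)) & (\alpha\in\Delta_{Q_1}),\\
\Ext^{i+1}_{\mathcal{H}}(\St_{Q_1}(\sigma_1),\St_{Q_{22}}(\sigma_2)) & (\alpha\notin\Delta_{Q_1}).
\end{cases}
\]
I would prove this by mirroring the proof of Lemma~\ref{lem:Ext of Steinberg, first step}. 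Applying $\Ext_{\mathcal{H}}(\St_{Q_1}(\sigma_1),-)$ to the exact sequence~\eqref{eq:rank 1 induction} (with $\sigma_2$, $Q_{21}$, $Q_{22}$ in place of $\sigma_1$, $Q_{11}$, $Q_{12}$) and using the adjunction $(L_{P_1},I_{P_1})$ with $\Delta_{P_1}=\Delta\setminus\{\alpha\}$, the middle term becomes $\Ext^i_{\mathcal{H}_{P_1}}(L_{P_1}(\St_{Q_1}(\sigma_1)),\St_{Q_{21}}^{P_1}(\sigma_2))$. Since $P(\sigma_1)=G$, we have $\Delta(\sigma_1)=\Delta$, and $\Delta_{Q_1}\cup\Delta_{P_1}$ equals $\Delta$ exactly when $\alpha\in\Delta_{Q_1}$; hence $L_{P_1}(\St_{Q_1}(\sigma_1))=0$ when $\alpha\notin\Delta_{Q_1}$ by \cite[Proposition~5.10, Proposition~5.18]{arXiv:1612.01312}, which yields the second case. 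The first case is reduced to the second by the involution $\iota$, using $(\St_Q(\sigma))^\iota\simeq \St_{Q^c}(\sigma_{\ell-\ell_P}^{\iota_P})$ from \cite[Theorem~3.6]{arXiv:1704.00407}, exactly as in the proof of Lemma~\ref{lem:Ext of Steinberg, first step}.

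With both lemmas available, I first iterate Lemma~\ref{lem:Ext of Steinberg, first step} to add the roots in $\Delta\setminus\Delta_{Q_1}$ one at a time, reducing $\St_{Q_1}(\sigma_1)$ to $\St_G(\sigma_1)=e_G(\sigma_1)$: each added root $\alpha\in\Delta_{Q_2}$ decreases the Ext index by $1$ and each $\alpha\notin\Delta_{Q_2}$ increases it by $1$, giving
\[
\Ext^i_{\mathcal{H}}(\St_{Q_1}(\sigma_1),\St_{Q_2}(\sigma_2))\simeq\Ext^{i-\#(\Delta_{Q_2}\setminus\Delta_{Q_1})+\#(\Delta\setminus(\Delta_{Q_1}\cup\Delta_{Q_2}))}_{\mathcal{H}}(e_G(\sigma_1),\St_{Q_2}(\sigma_2)).
\]
Then I iterate the dual lemma to reduce $\St_{Q_2}(\sigma_2)$ to $e_G(\sigma_2)$; since $\Delta_{Q_1}$ is now $\Delta$, every added root lies in $\Delta_{Q_1}$ and each step decreases the index by $1$, contributing a further $-\#(\Delta\setminus\Delta_{Q_2})$. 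Using $\#(\Delta\setminus\Delta_{Q_2})=\#(\Delta_{Q_1}\setminus\Delta_{Q_2})+\#(\Delta\setminus(\Delta_{Q_1}\cup\Delta_{Q_2}))$, the total shift simplifies to $-\#(\Delta_{Q_1}\bigtriangleup\Delta_{Q_2})$, as required.

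The main obstacle is the dual lemma; once it is in place the rest is a bookkeeping exercise. Specifically, I need to verify carefully that (under $P(\sigma_1)=G$) the vanishing of $L_{P_1}(\St_{Q_1}(\sigma_1))$ really reduces to the clean dichotomy $\alpha\in\Delta_{Q_1}$ versus $\alpha\notin\Delta_{Q_1}$ (using also $\alpha\notin\Delta_P$, which holds since $\Delta_P\subset\Delta_{Q_{21}}$), and that the twists by $\iota_P$ and $\ell-\ell_P$ introduced when $\iota$ is invoked unwind correctly because $\iota$ is applied symmetrically to both arguments of $\Ext$.
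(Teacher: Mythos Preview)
Your argument is correct, but it takes a different route from the paper's. The paper avoids proving a separate ``dual lemma'' for the second argument: after iterating Lemma~\ref{lem:Ext of Steinberg, first step} to reduce the first argument to $e_G(\sigma_1)$, it applies the $\zeta$-duality of Lemma~\ref{lem:ext and dual} together with the identifications $(\St_{Q_2}(\sigma_2))^*\simeq \St_{Q_2'}(\sigma_2^*)$ and $e_G(\sigma_1)^*\simeq e_G(\sigma_1^*)$ to swap the two arguments, then reapplies Lemma~\ref{lem:Ext of Steinberg, first step} verbatim, and finally undoes the duality. Your approach instead mirrors the proof of Lemma~\ref{lem:Ext of Steinberg, first step} directly on the second variable, replacing the $(I_{P_1},R_{P_1})$-adjunction of Proposition~\ref{prop:extension and right adjoint}(2) by the $(L_{P_1},I_{P_1})$-adjunction of Proposition~\ref{prop:extension and right adjoint}(1). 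Both are valid; the paper's route is slightly more economical because it reuses Lemma~\ref{lem:Ext of Steinberg, first step} without reproving anything, at the cost of invoking how $*$ interacts with generalized Steinberg modules, while your route is more self-contained and symmetric (and does not need finite-dimensionality of the second argument, which the paper's use of Proposition~\ref{prop:extension and right adjoint}(2) implicitly relies on). Your verification that $P\subset P_1$ (via $\alpha\notin\Delta_P$ since $\Delta_P\subset\Delta_{Q_{21}}$) and the unwinding of the $\iota$-twist are exactly what is needed; the final combinatorial bookkeeping agrees with the paper's computation of $r_1+r_2$.
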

\begin{proof}
By applying Lemm~\ref{lem:Ext of Steinberg, first step} several times, we have
\[
\Ext^i_{\mathcal{H}}(\St_{Q_1}(\sigma_1),\St_{Q_2}(\sigma_2))
\simeq \Ext^{i - r_1}_{\mathcal{H}}(e_G(\sigma_1),\St_{Q_2}(\sigma_2))
\]
where $r_1 = \#\{\alpha\in\Delta\setminus \Delta_{Q_1}\mid \alpha\in\Delta_{Q_2}\} - \#\{\alpha\in\Delta\setminus\Delta_{Q_1}\mid \alpha\notin\Delta_{Q_2}\}$.
Set $Q'_2 = n_{w_Gw_{Q_2}}\opposite{Q_2}n_{w_Gw_{Q_2}}^{-1}$.
Then by Lemma~\ref{lem:ext and dual}, we get
\begin{align*}
\Ext^{i - r_1}_{\mathcal{H}}(e_G(\sigma_1),\St_{Q_2}(\sigma_2))
& \simeq \Ext^{i - r_1}_{\mathcal{H}}((\St_{Q_2}(\sigma_2))^*,e_G(\sigma_1)^*)\\
& \simeq \Ext^{i - r_1}_{\mathcal{H}}(\St_{Q'_2}(\sigma_2^*),e_G(\sigma_1^*)).
\end{align*}
Again using Lemma~\ref{lem:Ext of Steinberg, first step}, we have
\[
\Ext^{i - r_1}_{\mathcal{H}}(\St_{Q'_2}(\sigma_2^*),e_G(\sigma_1^*))
\simeq
\Ext^{i - r_1 - r_2}_{\mathcal{H}}(e_G(\sigma_2^*),e_G(\sigma_1^*)).
\]
where $r_2 = \#(\Delta\setminus\Delta_{Q'_2}) = \#(\Delta\setminus\Delta_{Q_2})$.
Applying Lemma~\ref{lem:ext and dual} again, we get
\[
\Ext^{i - r_1 - r_2}_{\mathcal{H}}(e_G(\sigma_2^*),e_G(\sigma_1^*))
\simeq
\Ext^{i - r_1 - r_2}_{\mathcal{H}}(e_G(\sigma_1),e_G(\sigma_2)).
\]
Since $r_1 + r_2 = \#(\Delta_{Q_1}\bigtriangleup\Delta_{Q_2})$, we get the lemma.
\end{proof}

Recall that the trivial module $\trivrep$ is defined by $\trivrep(T_w) = q_w$.
We denote the restriction of $\trivrep$ to $\mathcal{H}_\aff$ by $\trivrep_{\mathcal{H}_\aff}$.
\begin{cor}\label{cor:vanishing of ext between trivials}
We have $\Ext^i_{\mathcal{H}_\aff}(\trivrep_{\mathcal{H}_\aff},\trivrep_{\mathcal{H}_\aff}) = 0$ for $i > 0$.
\end{cor}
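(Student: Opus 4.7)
The plan is to reduce the vanishing to an Ext-computation over $\mathcal{H}$ and then apply Theorem~\ref{thm:extension between steinbergs}. First I would record that $\mathcal{H}$ is free as a right $\mathcal{H}_\aff$-module, with basis given by lifts in $W(1)$ of coset representatives of $W_\aff(1)$ in $W(1)$; hence $\Ind_{\mathcal{H}_\aff}^{\mathcal{H}}(N) = N\otimes_{\mathcal{H}_\aff}\mathcal{H}$ is exact and sends projectives to projectives. Using $\trivrep_G|_{\mathcal{H}_\aff} = \trivrep_{\mathcal{H}_\aff}$, standard Frobenius reciprocity extends to an isomorphism
\[
\Ext^i_{\mathcal{H}_\aff}(\trivrep_{\mathcal{H}_\aff},\trivrep_{\mathcal{H}_\aff}) \simeq \Ext^i_{\mathcal{H}}(\Ind_{\mathcal{H}_\aff}^{\mathcal{H}}\trivrep_{\mathcal{H}_\aff},\trivrep_G),
\]
so it suffices to compute the right-hand side.

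Next I would decompose $\Ind_{\mathcal{H}_\aff}^{\mathcal{H}}\trivrep_{\mathcal{H}_\aff}$ into simple constituents via Theorem~\ref{thm:classification modulo supersingular}. Each constituent has the form $I(P,\sigma,Q)$, and Lemma~\ref{lem:vanishing of extension by central character} kills every contribution with $P\ne G$; what remains are characters $\chi$ of $\mathcal{H}$ whose restriction to $\mathcal{H}_\aff$ is $\trivrep_{\mathcal{H}_\aff}$, i.e., twists of $\trivrep_G$ by characters of $W(1)/W_\aff(1)$. For each such $\chi$, Theorem~\ref{thm:extension between steinbergs} (applied with $P=G$ and $Q_1=Q_2=G$, so that the symmetric difference is empty) identifies $\Ext^i_{\mathcal{H}}(\chi,\trivrep_G)$ with $\Ext^i_{\mathcal{H}}$ between the underlying characters themselves; thus the whole problem reduces to showing $\Ext^i_{\mathcal{H}}(\chi,\trivrep_G)=0$ for $i>0$ for these characters.

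This last input I would obtain from an independent projective resolution of $\trivrep_G$ over $\mathcal{H}$, specifically the Ollivier--Schneider resolution: applying $\Hom_{\mathcal{H}}(-,\chi)$ converts it into the simplicial cochain complex of the closed fundamental alcove (viewed as a cell complex), which is contractible, so its cohomology vanishes in positive degree. For $\chi\ne\trivrep_G$ the $\Hom$-term is already zero, and the same resolution together with Lemma~\ref{lem:vanishing of extension by central character} forces vanishing in higher degrees.

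The main obstacle is two-fold: first, making the decomposition of $\Ind_{\mathcal{H}_\aff}^{\mathcal{H}}\trivrep_{\mathcal{H}_\aff}$ explicit requires tracking the $W(1)/W_\aff(1)$-action on characters of $\mathcal{H}_\aff$; and second, the input vanishing $\Ext^i_{\mathcal{H}}(\trivrep_G,\trivrep_G)=0$ for $i>0$ does not follow directly from Theorem~\ref{thm:extension between steinbergs} and must be supplied by external resolution-theoretic input. Once both are in place, the chain of reductions above closes to give the corollary.
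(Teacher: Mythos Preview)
Your approach has a genuine gap. The key step---showing $\Ext^i_{\mathcal{H}}(\chi,\trivrep_G)=0$ for $i>0$ and every character $\chi$ of $\mathcal{H}$ extending $\trivrep_{\mathcal{H}_\aff}$---is simply false for general $G$. The paper's own Example in \S3.6 computes, for $G=\mathrm{PGL}_2$ in characteristic~$2$, that $\Ext^i_{\mathcal{H}}(\trivrep_G,\trivrep_G)\ne 0$ for infinitely many $i$. So the ``external resolution-theoretic input'' you hope to supply does not exist over $\mathcal{H}$ for the original group. Relatedly, $\Ind_{\mathcal{H}_\aff}^{\mathcal{H}}\trivrep_{\mathcal{H}_\aff}$ is in general infinite-dimensional and \emph{not} semisimple (for $\mathrm{PGL}_2$ it is the regular module of a $2$-group in characteristic~$2$), so the ``decompose into simple constituents and treat each separately'' manoeuvre is illegitimate: the long exact sequences coming from a filtration genuinely interact, and it is precisely this interaction that makes the Frobenius-reciprocity side vanish even though the individual $\Ext^i_{\mathcal{H}}(\chi,\trivrep_G)$ do not. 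There is also a smaller confusion: in the classification one has $\trivrep_G=I(B,\trivrep_B,G)$, so Lemma~\ref{lem:vanishing of extension by central character} kills constituents with $P\ne B$, not $P\ne G$; the survivors are all $I(B,\sigma,Q)$, not just characters.

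The paper's proof avoids all of this by changing the group rather than inducing. It first passes from $\mathcal{H}_\aff$ to $\overline{\mathcal{H}}_\aff$ (the quotient by $T_t-1$ for $t\in Z_\kappa\cap W_\aff(1)$), which depends only on the Coxeter system $(W_\aff,S_\aff)$, and then recognises $\overline{\mathcal{H}}_\aff$ as the full pro-$p$-Iwahori Hecke algebra of the split simply-connected semisimple group $H$ with that affine Weyl group. For such $H$ one has $\mathcal{H}'_\aff=\mathcal{H}'$, and now the Ollivier--Schneider bound on the projective dimension of $\trivrep$ (equal to $\#\Delta$) combined with Theorem~\ref{thm:extension between steinbergs} (which shifts $\Ext$ by $\#\Delta$ when passing from $\trivrep=e_G(\trivrep_B)$ to $\St_B(\trivrep_B)$) gives the vanishing. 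The essential point you are missing is this reduction to a group where $\mathcal{H}_\aff$ \emph{is} the whole Hecke algebra; without it the target vanishing over $\mathcal{H}$ is false.
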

\begin{proof}

Let $\overline{\mathcal{H}}_\aff$ be the quotient of $\mathcal{H}_\aff$ by the ideal generated by $\{T_t - 1\mid t\in Z_\kappa\cap W_\aff(1)\}$.
Then this is the Hecke algebra attached to the Coxeter system $(W_\aff,S_\aff)$.
Let $\pi_2$ (resp.\ $\overline{\pi}_1$) be an an $\mathcal{H}_\aff$-module (resp.\ $\overline{\mathcal{H}}_\aff$-module).
Then we have $\Hom_{\mathcal{H}_\aff}(\overline{\pi}_1,\pi_2) = \Hom_{\overline{\mathcal{H}}_\aff}(\overline{\pi}_1,\pi_2^{Z_\kappa})$.
In particular $\pi_2\mapsto \pi_2^{Z_\kappa}$ sends injective $\mathcal{H}$-modules to injective $\overline{\mathcal{H}}_\aff$-modules.
Since the functor $\pi_2\mapsto \pi_2^{Z_\kappa}$ is exact, we have $\Ext^i_{\mathcal{H}_\aff}(\overline{\pi}_1,\pi_2) \simeq \Ext^i_{\overline{\mathcal{H}}_\aff}(\overline{\pi}_1,\pi_2^{Z_\kappa})$.
Therefore we have $\Ext^i_{\mathcal{H}_\aff}(\trivrep_{\mathcal{H}_\aff},\trivrep_{\mathcal{H}_\aff})\simeq \Ext^i_{\overline{\mathcal{H}}_\aff}(\trivrep_{\overline{\mathcal{H}}_\aff},\trivrep_{\overline{\mathcal{H}}_\aff})$.
Consider the root system which defines $(W_\aff,S_\aff)$ and let $H$ be the split simply-connected semisimple group with this root system.
Then the affine Hecke algebra attached to $H$ is $\overline{\mathcal{H}}_\aff$.
Let $\mathcal{H}'$ be the pro-$p$-Iwahori Hecke algebra for $H$.
Then $H' = H$ by \cite[II.3.Proposition]{MR3600042}, hence $\mathcal{H}'_\aff = \mathcal{H}'$.
Therefore the above argument implies that $\Ext^i_{\mathcal{H}'}(\trivrep_{\mathcal{H}'},\trivrep_{\mathcal{H}'})\simeq \Ext^i_{\overline{\mathcal{H}}_\aff}(\trivrep_{\overline{\mathcal{H}}_\aff},\trivrep_{\overline{\mathcal{H}}_\aff})$.

Therefore it is sufficient to prove $\Ext^i_{\mathcal{H}}(\trivrep_G,\trivrep_G) = 0$ for $i > 0$ assuming $G$ is a split simply-connected semisimple group.
By \cite[Proposition~6.20]{MR3249689}, the projective dimension of $\trivrep_G$ is equal to the semisimple rank of $G$, namely $\#\Delta$.
Therefore $\Ext^{i + \#\Delta}_{\mathcal{H}}(\trivrep_G,\St_B(\trivrep_{B})) = 0$ for $i > 0$.
The left hand side is $\Ext^i_{\mathcal{H}}(\trivrep_G,\trivrep_G)$ by Theorem~\ref{thm:extension between steinbergs}.
\end{proof}

\subsection{Extension between extensions}
Let $P$ be a parabolic subgroup and $\sigma$ an $\mathcal{H}_P$-module which has the extension $e_G(\sigma)$ to $\mathcal{H}$.
In particular, $\Delta_P$ and $\Delta\setminus\Delta_P$ are orthogonal to each other.
Let $P_2$ be a parabolic subgroup corresponding to $\Delta\setminus\Delta_P$.
Let $J\subset \mathcal{H}$ be an ideal generated by $\{T_w^* - 1\mid w\in W_{\aff,P_2}(1)\}$.
Then $e_G(\sigma)(J) = 0$.
Hence for any module $\pi$ of $\mathcal{H}$, we have
\[
\Hom_\mathcal{H}(e(\sigma),\pi) = \Hom_{\mathcal{H}/J}(e(\sigma),\{v\in \pi\mid  vJ = 0\}).
\]
Note that $T_w^{P_2}\mapsto T_w$ defines the injection $\mathcal{H}_{\aff,P_2}\to \mathcal{H}$ since the restriction of $\ell$ on $W_{\aff,P_2}(1)$ is $\ell_{P_2}$.
Since any generator of $J$ is in $\mathcal{H}_{\aff,P_2}$, we have $\{v\in \pi\mid  vJ = 0\} = \{v\in \pi\mid  v(J\cap \mathcal{H}_{\aff,P_2}) = 0\}$.
Since the trivial representation $\trivrep_{\mathcal{H}_{\aff,P_2}}$ of $\mathcal{H}_{\aff,P_2}$ is isomorphic to $\mathcal{H}_{\aff,P_2}/(J\cap \mathcal{H}_{\aff,P_2})$, we get
\[
\{v\in \pi\mid  v(J\cap \mathcal{H}_{\aff,P_2}) = 0\}
=
\Hom_{\mathcal{H}_{\aff,P_2}}(\trivrep_{\mathcal{H}_{\aff,P_2}},\pi).
\]
Hence we get
\[
\Hom_\mathcal{H}(e_G(\sigma),\pi)
=
\Hom_{\mathcal{H}/J}(e_G(\sigma),\Hom_{\mathcal{H}_{\aff,P_2}}(\trivrep_{\mathcal{H}_{\aff,P_2}},\pi)).
\]
This isomorphism can be generalized as 
\[
\Hom_\mathcal{H}(\pi_1,\pi)
=
\Hom_{\mathcal{H}/J}(\pi_1,\Hom_{\mathcal{H}_{\aff,P_2}}(\trivrep_{\mathcal{H}_{\aff,P_2}},\pi))
\]
for any $\mathcal{H}/J$-module $\pi_1$.
In particular, $\pi\mapsto \Hom_{\mathcal{H}_{\aff,P_2}}(\trivrep_{\mathcal{H}_{\aff,P_2}},\pi)$ from the category of $\mathcal{H}$-modules to the category of $\mathcal{H}/J$-modules preserves injective modules.
Hence we have a spectral sequence
\[
\Ext^i_{\mathcal{H}/J}(e_G(\sigma),\Ext^j_{\mathcal{H}_{\aff,P_2}}(\trivrep_{\mathcal{H}_{\aff,P_2}},\pi))\Rightarrow \Ext^{i + j}_\mathcal{H}(e(\sigma),\pi).
\]
Now let $\sigma_1,\sigma_2$ be $\mathcal{H}_P$-modules such that both have the extensions $e_G(\sigma_1),e_G(\sigma_2)$ to $\mathcal{H}$.
Since $e_G(\sigma_2)|_{\mathcal{H}_{\aff,P_2}}$ is a direct sum of the trivial representations, we have
\[
\Ext^j_{\mathcal{H}_{\aff,P_2}}(\trivrep_{\mathcal{H}_{\aff,P_2}},e(\sigma_2)) = 0
\]
for $j > 0$ by Corollary~\ref{cor:vanishing of ext between trivials}.
Hence
\[
\Ext^i_\mathcal{H}(e_G(\sigma_1),e_G(\sigma_2))\simeq \Ext^i_{\mathcal{H}/J}(e_G(\sigma_1),e_G(\sigma_2)).
\]
\begin{lem}[{\cite[Proposition~3.5]{arXiv:1703.10384}}]
Let $I$ be the ideal of $\mathcal{H}_P$ generated by $\{T^P_\lambda - 1\mid \lambda\in \Lambda(1)\cap W_{\aff,P_2}(1)\}$.
Then we have $\mathcal{H}/J\simeq \mathcal{H}_P/I$.
\end{lem}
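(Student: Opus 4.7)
The strategy is to build an algebra homomorphism $\varphi\colon\mathcal{H}_P\to\mathcal{H}/J$ sending $T_w^{P*}\mapsto T_w^*+J$ for $w\in W_P(1)$, and then verify that it descends to an isomorphism from $\mathcal{H}_P/I$. The existence of such a map is natural: by the proposition preceding Definition~\ref{defn:extension of module}, any $\mathcal{H}/J$-module automatically carries, via the assignment $T_w^{P*}\mapsto T_w^*$, an $\mathcal{H}_P$-module structure on which $T^P_\lambda$ acts as the identity for $\lambda\in\Lambda(1)\cap W_{\aff,P_2}(1)$ (because $T^*_\lambda=1$ on such modules), so $\mathcal{H}/J$ is the ``universal'' example of this passage, and the lemma promotes this observation to an algebra identification.

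The main verifications are: (a) $\varphi$ is a well-defined algebra homomorphism; (b) $I\subset\ker\varphi$; (c) $\varphi$ is surjective. For (a), the quadratic relations for $s\in S_{\aff,P}(1)$ are inherited from $\mathcal{H}$ since $q,c$ restrict. The braid relations $T^{P*}_{w_1w_2}=T^{P*}_{w_1}T^{P*}_{w_2}$ under $\ell_P(w_1w_2)=\ell_P(w_1)+\ell_P(w_2)$ are the delicate point: the corresponding identity in $\mathcal{H}$ requires $\ell(w_1w_2)=\ell(w_1)+\ell(w_2)$, and on $W_P(1)$ the discrepancy between $\ell$ and $\ell_P$ is exactly a sum of pairings against $\Sigma_{P_2}^+$. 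The orthogonality $\Delta_P\perp\Delta\setminus\Delta_P$ lets one absorb this discrepancy into elements of $W_{\aff,P_2}(1)$, whose $T^*$-images are trivial modulo $J$, so the required relation holds in $\mathcal{H}/J$. For (b), note that $\lambda\in\Lambda(1)\cap W_{\aff,P_2}(1)$ satisfies $\langle\alpha,\nu(\lambda)\rangle=0$ for every $\alpha\in\Sigma_P$ by orthogonality, hence $\ell_P(\lambda)=0$; therefore $T^P_\lambda=T^{P*}_\lambda$ in $\mathcal{H}_P$, so $\varphi(T^P_\lambda-1)=T^*_\lambda-1\in J$. For (c), the decomposition $W_0=W_{0,P}\times W_{0,P_2}$ lets us factor any $w\in W(1)$ as $w=w'\cdot w''$ with $w'\in W_P(1)$ and $w''$ lying in $W_{P_2}(1)$; modulo $W_{\aff,P_2}(1)$, the factor $w''$ represents an element of $\Omega_{P_2}(1)$ which can be absorbed into $W_P(1)$ using the $Z_\kappa\subset W_P(1)$ part. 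Unwinding this at the level of $T^*$ bases shows every $T_w^*+J$ is in $\Ima\varphi$.

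For injectivity one constructs an explicit set-theoretic splitting $\psi\colon\mathcal{H}/J\to\mathcal{H}_P/I$ using the factorization above and checks $\psi\circ\varphi=\id$; concretely, one shows that the only relations $J$ imposes among $\{T_w^*\}_{w\in W_P(1)}$ modulo $J$ are the translation relations $T^*_\lambda=1$ for $\lambda\in\Lambda(1)\cap W_{\aff,P_2}(1)$, which are exactly the generators of $I$. Alternatively, having already both the hom $\varphi$ and the compatibility of $J$ with $e_G$, one can argue via a free-basis count over $C$ using a decomposition of $W(1)$ into $W_P(1)$-cosets and the $\Omega_{P_2}(1)$-structure. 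The main obstacle is the braid-relation check in step (a): one must verify that every ``length correction'' arising from the difference between $\ell$ and $\ell_P$ on $W_P(1)$ can be re-expressed as a multiplication by a $T^*_\mu$ with $\mu\in W_{\aff,P_2}(1)$, which ultimately is what the orthogonality hypothesis is designed to guarantee.
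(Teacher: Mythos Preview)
The paper does not prove this lemma: it is stated with a citation to \cite[Proposition~3.5]{arXiv:1703.10384} and no proof is given here, so there is no argument in the present paper to compare your proposal against.

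Your outline is a reasonable strategy and captures the right mechanism---extending the partial homomorphism $j_P^{-*}$ (which a priori only lives on $\mathcal{H}_P^-$) to all of $\mathcal{H}_P$ once the $P_2$-direction is killed by $J$---but several steps are only gestured at. In particular, your surjectivity argument is muddled: after factoring $w\in W(1)$ through $W_0=W_{0,P}\times W_{0,P_2}$, the finite-Weyl piece $n_{v_2}$ with $v_2\in W_{0,P_2}$ already lies in $W_{\aff,P_2}(1)$, so $T^*_{n_{v_2}}\equiv 1$ modulo $J$ directly; there is no need to pass to $\Omega_{P_2}(1)$ and ``absorb into $W_P(1)$ via $Z_\kappa$'', and that phrasing does not literally make sense since $\Omega_{P_2}(1)\not\subset W_P(1)$ in general. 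The genuine work for surjectivity is to control how $T^*_w$ factors along a decomposition $w=n_{v_1}n_{v_2}\lambda$ with $v_1\in W_{0,P}$, $v_2\in W_{0,P_2}$, $\lambda\in\Lambda(1)$, and this requires the length-additivity coming from orthogonality. Likewise, your injectivity paragraph offers two alternatives but carries out neither; a clean route is the basis argument you allude to, exhibiting a $C$-basis of $\mathcal{H}/J$ indexed by $W_P(1)/(\Lambda(1)\cap W_{\aff,P_2}(1))$ and matching it against the evident basis of $\mathcal{H}_P/I$. If you want a complete proof you should consult the cited reference.
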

Therefore we get
\[
\Ext^i_\mathcal{H}(e_G(\sigma_1),e_G(\sigma_2))\simeq \Ext^i_{\mathcal{H}_P/I}(\sigma_1,\sigma_2).
\]

\begin{prop}\label{prop:quotient of Hecke algebra is abstract Hecke algebra}
Set $W'_\aff = W_{\aff,P}$, $S'_\aff = S_{\aff,P}$, $W' = W_P/(\Lambda\cap W_{\aff,P_2})$, $\Omega' = \Omega_P/(\Lambda\cap W_{\aff,P_2})$, $W'(1) = W_P(1)/(\Lambda(1)\cap W_{\aff,P_2}(1))$, $Z'_\kappa = Z_\kappa/(Z_\kappa\cap W_{\aff,P_2}(1))$.
Then $(W'_\aff,S'_\aff,\Omega',W',W'(1),Z'_\kappa)$ satisfies the condition of subsection~\ref{subsec:Prop-p-Iwahori Hecke algebra} and the attached algebra is $\mathcal{H}/I$.
Moreover $\Omega'$ is commutative.
\end{prop}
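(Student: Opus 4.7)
The plan is to verify that the six-tuple satisfies the axioms of Section~\ref{subsec:Prop-p-Iwahori Hecke algebra}, then identify the attached algebra with $\mathcal{H}_P/I$, and finally establish the commutativity of $\Omega'$. The crucial geometric input is the orthogonality of $\Delta_P$ and $\Delta_{P_2}=\Delta\setminus\Delta_P$, which is forced by the existence of $e_G(\sigma)$. From this, $W_{0,P}$ acts trivially on the subspace spanned by $\Delta_{P_2}^\vee$ and hence normalizes $W_{\aff,P_2}$; intersecting with the normal subgroups $\Lambda$, $\Lambda(1)$, $Z_\kappa$ then shows that $\Lambda\cap W_{\aff,P_2}$, $\Lambda(1)\cap W_{\aff,P_2}(1)$ and $Z_\kappa\cap W_{\aff,P_2}(1)$ are normal in $W_P$, $W_P(1)$ and $Z_\kappa$, respectively. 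Since any $\lambda\in\Lambda\cap W_{\aff,P_2}$ pairs trivially with $\Sigma_P$, it has $\ell_P$-length zero, so $\Lambda\cap W_{\aff,P_2}\subset\Omega_P$, and the quotient $\Omega_P/(\Lambda\cap W_{\aff,P_2})$ is meaningful inside $W_P$.

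I would next verify the axioms. The Coxeter system $(W'_\aff,S'_\aff)$ is simply $(W_{\aff,P},S_{\aff,P})$. Because the coroots of $\Sigma_P$ and $\Sigma_{P_2}$ span complementary subspaces, $W_{\aff,P}\cap W_{\aff,P_2}=\{1\}$, so the decomposition $W_P=W_{\aff,P}\rtimes\Omega_P$ descends to $W'=W'_\aff\rtimes\Omega'$, with the conjugation action of $\Omega'$ on $(W'_\aff,S'_\aff)$ inherited from that of $\Omega_P$. A diagram chase on $1\to Z_\kappa\to W_P(1)\to W_P\to 1$ modulo the compatible normal subgroups yields the required short exact sequence $1\to Z'_\kappa\to W'(1)\to W'\to 1$.

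To identify $\mathcal{H}_P/I$ with the Hecke algebra attached to the new data, I would use that for $\lambda\in\Lambda(1)\cap W_{\aff,P_2}(1)$ we have $\ell_P(\lambda)=0$, so $T^P_\lambda$ is invertible in $\mathcal{H}_P$ and satisfies $T^P_\lambda T^P_w=T^P_{\lambda w}$, $T^P_wT^P_\lambda=T^P_{w\lambda}$. Hence $I$ is the $C$-span of $\{T^P_{\lambda w}-T^P_w\}$, so $\mathcal{H}_P/I$ has basis indexed by $W'(1)$; the quadratic and braid relations of $\mathcal{H}_P$ then descend to those for the new data with parameters induced from $(q,c)$, where well-definedness of $c$ on lifts in $W'(1)$ follows from the condition $c_{ts}=tc_s$ applied to $t\in Z_\kappa\cap W_{\aff,P_2}(1)$. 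For the commutativity of $\Omega'$, note that $W_{0,P}\subset W_{\aff,P}$ gives $W_P=W_{0,P}\ltimes\Lambda$, identifying $\Omega_P$ with $\Lambda/(\Lambda\cap W_{\aff,P})$, a quotient of the abelian group $\Lambda$; so $\Omega_P$ and \emph{a fortiori} its quotient $\Omega'$ are commutative. The main obstacle I expect is the bookkeeping in the identification step — ensuring that the quadratic relations for lifts of $s\in S'_\aff$ and the braid relations descend cleanly — but this ultimately reduces to the length-zero property of $\Lambda(1)\cap W_{\aff,P_2}(1)$ together with the orthogonality of $\Delta_P$ and $\Delta_{P_2}$.
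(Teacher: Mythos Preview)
Your proposal is correct and follows essentially the same route as the paper. The only presentational difference is in the identification step: the paper constructs the linear map $\Phi\colon\mathcal{H}_P\to\mathcal{H}'$, $T^P_w\mapsto T'_{\overline{w}}$, checks it is an algebra homomorphism, and then computes $\Ker\Phi=I$ by an explicit coefficient argument, whereas you argue directly that $I$ is the $C$-span of $\{T^P_{\lambda w}-T^P_w\}$ (using $\ell_P(\lambda)=0$ and the normality of $\Lambda(1)\cap W_{\aff,P_2}(1)$ you already established) and conclude that $\mathcal{H}_P/I$ has a basis indexed by $W'(1)$; these are two sides of the same computation. Your justification of the commutativity of $\Omega_P$ via $\Omega_P\simeq\Lambda/(\Lambda\cap W_{\aff,P})$ is slightly more explicit than the paper, which simply asserts that $\Omega_P$ is commutative.
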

\begin{proof}
Since $\Delta = \Delta_P\cup \Delta_{P_2}$ is the orthogonal decomposition, we have $W_\aff = W_{\aff,P}\times W_{\aff,P_2}$ and $S_\aff = S_{\aff,P}\cup S_{\aff,P_2}$.
The pair $(W'_\aff,S'_\aff) = (W_{\aff,P},S_{\aff,P})$ is a Coxeter system and $\Omega_P$ acts on it.
Since $W_{\aff,P_2}$ commutes with $W_{\aff,P}$, this gives the action of $\Omega'$ on $(W'_\aff,S'_\aff)$.
We have $W'_\aff\subset W_P$ and since $W_{\aff,P}\cap W_{\aff,P_2}$ is trivial, we have the embedding $W'_\aff\subset W'$.
We also have $\Omega_P\subset W'$.
Since $W_P = W_{\aff,P}\Omega_P$, we have $W' = W'_\aff\Omega'$.
Since $W_{\aff,P}\cap \Omega_P = \{1\}$, we have $W'_\aff\cap \Omega' = \{1\}$ in $W'$.
Hence $W' = W'_\aff\rtimes\Omega'$.
Since $Z_\kappa$ is finite and commutative, $Z'_\kappa$ is also a finite commutative group.
The existence of the exact sequence
\[
1\to Z'_\kappa\to W'(1)\to W'\to 1
\]
is obvious.
Note that the length function $\ell'\colon W'(1)\to \Z_{\ge 0}$ is given by $\ell_P\colon W_P'(1)\to \Z_{\ge 0}$ since $S'_\aff = S_{\aff,P}$ and $\Omega'$ is the image of $\Omega_P$.

We put $q'_s = q_s$ for $s\in S_{\aff,P}$. (Note that $q_s = q_{s,P}$ since $\Delta = \Delta_P\cup \Delta_{P_2}$ is an orthogonal decomposition.)
For $s\in \Refs{W'(1)}$, take its lift $\widetilde{s}\in \Refs{W_P(1)}$ and let $c'_s$ be the image of $c_{\widetilde{s}}$ in $C[Z'_\kappa]$.
We prove that this is well-defined.
Let $\widetilde{s}'$ be another lift and take $\lambda\in \Lambda(1)\cap W_{\aff,P_2}(1)$ such that $\widetilde{s}' = \widetilde{s}\lambda$.
The image of $\widetilde{s}$ in $W$ is in $\Refs{W_P}\subset W_{P,\aff}$ since $S_{P,\aff}\subset W_{P,\aff}$ and $W_{P,\aff}$ is normal. (Recall that a reflection is an element which is conjugate to a simple reflection.)
Let $\overline{\lambda}$ be the image of $\lambda$ in $\Lambda$.
Since $\widetilde{s},\widetilde{s}'\in W_{P,\aff}(1)$, we have $\overline{\lambda}\in \Lambda\cap W_{\aff,P_2}\cap W_{\aff,P} = \{1\}$.
Hence $\lambda\in Z_\kappa$.
Since $\lambda\in W_{\aff,P_2}(1)$, we have $\lambda\in Z_\kappa\cap W_{\aff,P_2}(1)$.
Hence the image of $c_{\widetilde{s}'} = c_{\widetilde{s}\lambda} = c_{\widetilde{s}}\lambda$ is the same as that of $c_{\widetilde{s}}$ in $C[Z'_\kappa]$.

We get the parameter $(q',c')$ and let $\mathcal{H}' = \bigoplus_{w\in W'(1)}T'_w$ be the attached algebra.
Consider the linear map $\Phi\colon \mathcal{H}_P\to \mathcal{H}'$ defined by $T^P_w\mapsto T'_{\overline{w}}$ where $w\in W_P(1)$ and $\overline{w}\in W'(1)$ is the image of $w$.

First we prove that the map $\Phi$ preserves the relations.
Let $s\in W_P(1)$ be a lift of an affine simple reflection in $S_{\aff,P}$.
Then we have $(T^P_s)^2 = q_sT^P_{s^2} + c_sT^P_s$.
Let $\overline{s}$ be the image of $s$ in $W'(1)$.
Then we have $(T'_{\overline{s}})^2 = q'_{\overline{s}}T'_{\overline{s}^2} + c'_{\overline{s}}T'_{\overline{s}}$.
The definition of $(q',c')$ says $q'_{\overline{s}} = q_s$ and $\Phi(c_s) = c'_{\overline{s}}$.
Hence $\Phi$ preserves the quadratic relations.
The compatibility between $\ell_P$ and $\ell'$ implies that $\Phi$ preserves the braid relations.

Obviously $\Phi$ is surjective.
We prove that $\Ker\Phi = I$.
Clearly we have $I\subset \Ker\Phi$.
Let $\sum_{w\in W_P(1)}c_wT_w\in \Ker\Phi$ where $c_w\in C$.
Fix a section $x$ of $W_P(1)\to W'(1)$.
Then we have $\sum_{w\in W_P(1)}c_wT^P_w = \sum_{w\in W'(1)}\sum_{\lambda\in \Lambda(1)\cap W_{\aff,P_2}(1)}c_{x(w)\lambda}T^P_{x(w)\lambda}$.
Hence 
\[
0 = \Phi\left(\sum_{w\in W_P(1)}c_wT^P_w\right) = \sum_{w\in W'(1)}\left(\sum_{\lambda\in \Lambda(1)\cap W_{\aff,P_2}(1)}c_{x(w)\lambda}\right)T'_{w}.
\]
Therefore fore each $w\in W'(1)$ we have $\sum_{\lambda\in \Lambda(1)\cap W_{\aff,P_2}(1)}c_{x(w)\lambda} = 0$.
Hence
\begin{align*}
& \sum_{w\in W_P(1)}c_wT_w^P\\
& = \sum_{w\in W'(1)}\sum_{\lambda\in \Lambda(1)\cap W_{\aff,P_2}(1)}c_{x(w)\lambda}T_{x(w)\lambda}^P\\
& = \sum_{w\in W'(1)}\left(\sum_{\lambda\in \Lambda(1)\cap W_{\aff,P_2}(1)}c_{x(w)\lambda}T^P_{x(w)\lambda} - \sum_{\lambda\in \Lambda(1)\cap W_{\aff,P_2}(1)}c_{x(w)\lambda}T^P_{x(w)}\right)\\
& = \sum_{w\in W'(1)}\left(\sum_{\lambda\in \Lambda(1)\cap W_{\aff,P_2}(1)}c_{x(w)\lambda}T^P_{x(w)}(T^P_\lambda - 1)\right)\in I.
\end{align*}

Finally, $\Omega'$ is commutative since $\Omega_P$ is commutative.
\end{proof}
\begin{rem}
The data does not come from a reductive group in general.
\end{rem}

\subsection{Example}
Let $G = \mathrm{PGL}_2$.
We have $\Lambda(1) \simeq F^\times / (1 + (\varpi))\simeq \Z\times \kappa^\times$.
Consider $\widetilde{G} = \mathrm{SL}_2$.
Then $G'$ is the image of $\widetilde{G}\to G$~\cite[II.4 Proposition]{MR3600042}.
By this description, we have $\Lambda(1)\cap W_\aff(1) =  \{\lambda^2\mid \lambda\in \Lambda(1)\}$.
Therefore, with the notation in Proposition~\ref{prop:quotient of Hecke algebra is abstract Hecke algebra}, we have $W'_\aff = \{1\}$, $S'_{\aff} = \emptyset$, $W'(1) = \Lambda(1)/\{\lambda^2\mid \lambda\in \Lambda(1)\}$, $Z_\kappa = Z_\kappa/\{t^2\mid t\in Z_\kappa\}$.
We have $\mathcal{H}_B/I = C[W'(1)]$.

Consider the trivial module $\trivrep_G$.
Then we have $\trivrep_G = e_G(\trivrep_B)$ and we have
\[
\Ext^i_\mathcal{H}(\trivrep_G,\trivrep_G)
\simeq
\Ext^i_{\mathcal{H}_B/I}(\trivrep_B,\trivrep_B)
\simeq
\Ext^i_{C[W'(1)]}(\trivrep_B,\trivrep_B)
=
H^i(W'(1),C).
\]
Here $C$ is the trivial $W'(1)$-module.
Since the group $W'(1)$ is a $2$-group, this cohomology if zero if the characteristic of $C$ is not $2$.
However if the characteristic of $C$ is $2$, since $W'(1)\simeq \Z/2\Z$ ($p = 2$) or $(\Z/2\Z)^{\oplus 2}$ ($p\ne 2$), $H^i(W'(1),C)\ne 0$ if $i$ is even.
Therefore we have infinitely many $i$ with $\Ext^i_\mathcal{H}(\trivrep_G,\trivrep_G)\ne 0$.
This recovers Koziol's example~\cite[Example~6.2]{arXiv:1512.00247}.

\subsection{Summary}
Now we get a reduction.
The $\Ext^1$ between simple modules is equal to $\Ext^{1 - r}$ between supersingular simple modules for some $r\ge 0$ or zero.
In particular, if $r\ge 2$, then $\Ext^1$ between simple modules is zero.
If $r = 1$, then $\Ext^{1 - r} = \Hom$, so it is zero or one-dimensional.
If $r = 0$, we have to calculate $\Ext^1$ between supersingular simple modules.
Therefore, the only remaining task is to calculate $\Ext^1$ between supersingular simple modules.

\section{$\Ext^1$ between supersingular modules}\label{sec:Ext1 between supersingulars}
In this section, we fix a data $(W_\aff,S_\aff,\Omega,W,W(1),Z_\kappa)$ and let $\mathcal{H}$ be the algebra attached to this data.
We do not assume that this data comes from the data.
We also assume:
\begin{itemize}
\item our parameter $q_s$ is zero.
\item $\#Z_\kappa$ is prime to $p$.
\end{itemize}
As in subsection~\ref{subsec:supersingulars}, let $W^\aff(1)$ be the inverse image of $W_\aff$ in $W(1)$ and put $\mathcal{H}^\aff = \bigoplus_{w\in W^\aff(1)}CT_w$.

For a character $\chi$ of $Z_\kappa$ and $w\in W$, we define $(w\chi)(t) = \chi(\widetilde{w}^{-1}t\widetilde{w})$ where $\widetilde{w}\in W(1)$ is a lift of $W$.
Since $Z_\kappa$ is commutative, this does not depend on a lift $\widetilde{w}$ and defines a character $w\chi$ of $Z_\kappa$.
For a character $\Xi$ of $\mathcal{H}^\aff$ and $\omega\in \Omega(1)$, we write $\Xi\omega$ for the character $T_w\mapsto \Xi(T_{\omega w\omega^{-1}})$ for $w\in W^\aff(1)$.
Since $\Xi\omega$ only depends on the image $\overline{\omega}$ of $\omega$ in $\Omega$, we also write $\Xi\overline{\omega}$.

Note that since $s\cdot c_{\widetilde{s}} = c_{\widetilde{s}}$ for $s\in S_\aff$ with a lift $\widetilde{s}$ by the conditions of the parameter $c$, we have $(s\chi)(c_{\widetilde{s}}) = \chi(c_{\widetilde{s}})$.

\subsection{$\Ext^1$ for $\mathcal{H}^\aff$}
Let $\chi,\chi'$ be characters of $Z_\kappa$ and $J\subset S_{\aff,\chi},J'\subset S_{\aff,\chi'}$ subsets.
Then we have characters $\Xi = \Xi_{J,\chi}$, $\Xi' = \Xi_{J',\chi'}$ of $\mathcal{H}^\aff$.
We calculate $\Ext^1_{\mathcal{H}^\aff}(\Xi,\Xi')$.

To express the space of extensions, we need some notation.
For each $s\in S_\aff$, let $C_s$ be the set of functions $a$ on $\{\widetilde{s}\in W(1)\mid \widetilde{s}\mapsto s\in W\}$ such that $a(t\widetilde{s}) = \chi'(t)a(\widetilde{s})$, $a(\widetilde{s}t) = a(\widetilde{s})\chi(t)$ for any $t\in Z_\kappa$.
Then $C_s\ne 0$ if and only if $\chi' = s\chi$ and if $\chi' = s\chi$ then $\dim_C C_s = 1$.

Now we define some subsets of $S_\aff$.
First consider the sets 
\begin{align*}
A_1(\Xi,\Xi') & = \{s\in S_\aff\mid \Xi(T_{\widetilde{s}}) = \Xi'(T_{\widetilde{s}}) = 0\},\\
A_2(\Xi,\Xi') & = \{s\in S_\aff\mid \Xi(T_{\widetilde{s}}) \ne 0,\ \Xi'(T_{\widetilde{s}}) = 0\},\\
A_3(\Xi,\Xi') & = \{s\in S_\aff\mid \Xi(T_{\widetilde{s}}) = 0,\ \Xi'(T_{\widetilde{s}}) \ne 0\},\\
A_4(\Xi,\Xi') & = \{s\in S_\aff\mid \Xi(T_{\widetilde{s}}) \ne 0,\ \Xi'(T_{\widetilde{s}}) \ne 0\}.
\end{align*}
where $\widetilde{s}$ is a lift of $s$.
We define
\begin{align*}
S_2(\Xi,\Xi') & = A_2(\Xi,\Xi')\cup A_3(\Xi,\Xi')\\
S_1(\Xi,\Xi') & = \{s\in A_1(\Xi,\Xi')\setminus S_{\aff,\chi}\mid s\chi = \chi',\ \text{$(ss_1)^2 \ne 1$ for any $s_1\in S_2(\Xi,\Xi')$}\}.
\end{align*}

If $s\in S_{\aff,\chi}$ and $a\in C_s$, then $a(\widetilde{s})\chi(c_{\widetilde{s}})^{-1}\in C$ does not depend on a lift $\widetilde{s}$ of $s$.
We denote it by $a\chi(c_s)^{-1}$.
We also have that if $s\in S_{\aff,\chi'}$ then $a(\widetilde{s})\chi'(c_{\widetilde{s}})^{-1}$ does not depend on a lift $\widetilde{s}$.
We denote it by $a\chi'(c_s)^{-1}$.
If $a\ne 0$, then $\chi' = s \chi$.
Hence if $s\in S_{\aff,\chi}$ then $s\in S_{\aff,\chi'}$ and $a\chi(c_s)^{-1} = a\chi'(c_s)^{-1}$.

For the Hecke algebra attached to a finite Coxeter system, the following proposition is \cite[Theorem~5.1]{MR2134290} and we use a similar proof.
\begin{prop}\label{prop:extension between supersingulars, for H_aff}
Consider the subspace $E_2(\Xi,\Xi')$ of $\bigoplus_{s\in S_2(\Xi,\Xi')}C_s$ consisting $(a_s)$ such that
\begin{itemize}
\item If $s_1,s_2\in A_2(\Xi,\Xi')$, then $a_{s_1}\chi(c_{s_1})^{-1} = a_{s_2}\chi(c_{s_2})^{-1}$.
\item If $s_1,s_2\in A_3(\Xi,\Xi')$, then $a_{s_1}\chi'(c_{s_1})^{-1} = a_{s_2}\chi'(c_{s_2})^{-1}$.
\item If $s_1\in A_2(\Xi,\Xi')$, $s_2\in A_3(\Xi,\Xi')$ and $(s_1s_2)^2 = 1$, then $a_{s_1}\chi(c_{s_1})^{-1} + a_{s_2}\chi'(c_{s_2})^{-1} = 0$.
\end{itemize}
and put $E_1(\Xi,\Xi') = \bigoplus_{s\in S_1(\Xi,\Xi')}C_s$, $E(\Xi,\Xi') = E_1(\Xi,\Xi')\oplus E_2(\Xi,\Xi')$.
For $(a_s)\in E(\Xi,\Xi')$, consider the linear map $\mathcal{H}\to M_2(C)$ defined by
\[
T_{\widetilde{s}} \mapsto \begin{pmatrix}\Xi(T_{\widetilde{s}}) & 0\\ a_s(\widetilde{s}) & \Xi'(T_{\widetilde{s}})\end{pmatrix},
\]
where $a_s = 0$ if $s\notin S_1(\Xi,\Xi')\cup S_2(\Xi,\Xi')$.
Then this gives an extension of $\Xi$ by $\Xi'$ and it gives a surjective map $E(\Xi,\Xi')\to \Ext_{\mathcal{H}^\aff}^1(\Xi,\Xi')$.
The kernel is
\begin{equation}\label{eq:kernel of E->Ext1 (2)}
\left\{(a_s)\in E(\Xi,\Xi')\;\middle|\; 
\begin{array}{l}
a_{s_1}\chi(c_{s_1})^{-1} + a_{s_2}\chi'(c_{s_2})^{-1} = 0\\
\quad\quad\quad\quad (s_1\in A_2(\Xi,\Xi'),s_2\in A_3(\Xi,\Xi'))\\
a_s = 0\ (s\in S_1(\Xi,\Xi'))
\end{array}\right\}.
\end{equation}
\end{prop}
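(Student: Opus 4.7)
The plan follows the strategy of Fayers~\cite{MR2134290}. Any extension $0\to\Xi'\to E\to\Xi\to 0$ gives a two-dimensional $\mathcal{H}^\aff$-module $E$. Since $\#Z_\kappa$ is prime to $p$, Maschke's theorem lets me choose a basis $\{e_1,e_2\}$ of $E$ in which the generators $T_t$ ($t\in Z_\kappa$) act diagonally with eigenvalues $\chi(t)$ and $\chi'(t)$, with $e_2$ spanning $\Xi'$ and the image of $e_1$ generating $\Xi$. In this basis each $T_{\widetilde{s}}$ is represented by a lower triangular matrix with diagonal $(\Xi(T_{\widetilde{s}}),\Xi'(T_{\widetilde{s}}))$ and lower-left entry some scalar $a_s(\widetilde{s})$; the compatibility with the $T_t$ (via $T_{\widetilde{s}}T_t=T_{\widetilde{s}t}$ and $T_tT_{\widetilde{s}}=T_{t\widetilde{s}}$) forces $a_s\in C_s$, so in particular $a_s\ne 0$ implies $\chi'=s\chi$.

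Next I extract constraints from the relations of $\mathcal{H}^\aff$. The quadratic relation $T_{\widetilde{s}}^2=c_sT_{\widetilde{s}}$ (using $q_s=0$), compared on the $(2,1)$-entry, yields $a_s(\widetilde{s})\bigl(\Xi(T_{\widetilde{s}})+\Xi'(T_{\widetilde{s}})-\chi'(c_{\widetilde{s}})\bigr)=0$. A direct case analysis forces $a_s=0$ for $s\in A_4(\Xi,\Xi')$ and for $s\in A_1(\Xi,\Xi')\cap S_{\aff,\chi}$, while leaving $a_s$ unrestricted on $S_2(\Xi,\Xi')$ and on $A_1(\Xi,\Xi')\setminus S_{\aff,\chi}$.

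Then I turn to the braid relations. For a commuting pair $s_1,s_2\in S_\aff$ with $(s_1s_2)^2=1$, one can lift to $\widetilde{s}_1,\widetilde{s}_2\in W^\aff(1)$ with $\widetilde{s}_1\widetilde{s}_2=\widetilde{s}_2\widetilde{s}_1$; comparing $(2,1)$-entries in $T_{\widetilde{s}_1}T_{\widetilde{s}_2}=T_{\widetilde{s}_2}T_{\widetilde{s}_1}$ produces precisely the linear equations defining $E_2(\Xi,\Xi')$ (splitting into four cases according to which $A_i$ contain $s_1$ and $s_2$), and when one reflection lies in $S_1(\Xi,\Xi')$ and the other in $S_2(\Xi,\Xi')$ it forces the $S_1$-component to vanish, matching the condition $(ss_1)^2\ne 1$ in the definition of $S_1$. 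For non-commuting braid pairs with $m(s_1,s_2)\ge 3$, the higher-order braid identity must be checked to impose no new constraint beyond those already derived, and must also extend the equalities $a_{s_1}\chi(c_{s_1})^{-1}=a_{s_2}\chi(c_{s_2})^{-1}$ (resp.\ with $\chi'$) to arbitrary pairs in $A_2$ (resp.\ $A_3$); this is the main technical obstacle, handled by a direct matrix computation following the pattern of Fayers. For the forward direction, one verifies that the matrices prescribed by an arbitrary $(a_s)\in E(\Xi,\Xi')$ do satisfy all the relations of $\mathcal{H}^\aff$, using the same computations in reverse.

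Finally, any isomorphism of extensions preserving the embedding of $\Xi'$ and the projection to $\Xi$ is realized by a basis change $(e_1,e_2)\mapsto(e_1+be_2,e_2)$ for some $b\in C$, which shifts each $a_s(\widetilde{s})$ by $b\bigl(\Xi'(T_{\widetilde{s}})-\Xi(T_{\widetilde{s}})\bigr)$. This shift vanishes on $A_1(\Xi,\Xi')$, shifts $a_s\chi(c_s)^{-1}$ by $-b$ on $A_2(\Xi,\Xi')$, and shifts $a_s\chi'(c_s)^{-1}$ by $+b$ on $A_3(\Xi,\Xi')$. Hence the coboundary subspace of $E(\Xi,\Xi')$ is exactly the set described in~\eqref{eq:kernel of E->Ext1 (2)}, yielding the stated surjection onto $\Ext^1_{\mathcal{H}^\aff}(\Xi,\Xi')$ with that kernel.
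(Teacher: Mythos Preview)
Your proposal follows essentially the same approach as the paper's proof: diagonalise $Z_\kappa$ via Maschke, read off $a_s\in C_s$, extract constraints from the quadratic relations, then from the braid relations by explicit $2\times 2$ matrix computations (the paper does all braid orders uniformly and records the result in a table, whereas you treat the commuting case first and defer $m\ge 3$; both routes yield the same conditions, and in particular the paper uses \cite[Proposition~4.13(6)]{MR3484112} for the identity $c_{\widetilde{s}_1}c_{\widetilde{s}_2}\dotsm = c_{\widetilde{s}_2}c_{\widetilde{s}_1}\dotsm$ needed in the $A_2\times A_2$ case). Your kernel computation via the basis change $e_1\mapsto e_1+be_2$ is equivalent to the paper's simultaneous-diagonalisability argument.

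One small point you gloss over (the paper does too): for the basis change $e_1\mapsto e_1+be_2$ to be an $\mathcal{H}^\aff$-isomorphism of extensions it must also commute with all $T_t$, which forces $b(\chi(t)-\chi'(t))=0$; equivalently, the ``coboundary'' tuple $\widetilde{s}\mapsto b(\Xi'(T_{\widetilde{s}})-\Xi(T_{\widetilde{s}}))$ lies in $\bigoplus_s C_s$ only when $b=0$ or $\chi=\chi'$. This does not affect the final answer, but it is worth stating so that the kernel description is seen to be correct in the case $\chi\ne\chi'$ as well.
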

\begin{rem}
Let $V_2$ be a subspace of $\bigoplus_{s\in A_2(\Xi,\Xi')}C_s$ consisting $(a_s)$ such that $a_{s_1}\chi(c_{s_1})^{-1} = a_{s_2}\chi(c_{s_2})^{-1}$ for any $s_1,s_2\in A_2(\Xi,\Xi')$.
Then $\dim V_2\le 1$ and $V_2\ne 0$ if and only if $C_s\ne 0$ for any $s\in A_2(\Xi,\Xi')$, namely $s\chi = \chi'$ for any $s\in A_2(\Xi,\Xi')$.
Define $V_3$ by the similar way.
Then $\dim V_3\le 1$ and $V_3\ne 0$ if and only if $s\chi = \chi'$ for any $s\in A_3(\Xi,\Xi')$.
If there is no $s_1\in A_2(\Xi,\Xi')$ and $s_2\in A_3(\Xi,\Xi')$ such that $(s_1s_2)^2 = 1$, then $E_2(\Xi,\Xi') = V_2\oplus V_3$.
Otherwise $\dim E_2(\Xi,\Xi') = \max\{0,\dim V_2 + \dim V_3 - 1\}$.
\end{rem}

\begin{proof}
Let $M$ be an extension of $\Xi$ by $\Xi'$.
Since $\#Z_\kappa$ is prime to $p$, the representation of $Z_\kappa$ over $C$ is completely reducible.
Hence we can take a basis $e_1,e_2$ such that $T_te_1 = \chi(t)e_1$ and $T_te_2 = \chi'(t)e_2$.
With this basis, the action of $T_{\widetilde{s}}$ where $\widetilde{s}\in S_\aff(1)$ with the image $s\in S_\aff$ is described as
\[
T_{\widetilde{s}} = \begin{pmatrix}\Xi(T_{\widetilde{s}}) & 0\\ a_s(\widetilde{s}) & \Xi'(T_{\widetilde{s}})\end{pmatrix}.
\]
for some $a_s(\widetilde{s})\in C$.
The action of $T_t$ where $t\in Z_\kappa$ is given by
\[
\begin{pmatrix}
\chi(t) & 0\\0 & \chi'(t).
\end{pmatrix}
\]
Since $T_tT_{\widetilde{s}} = T_{t\widetilde{s}}$, we have
\[
\begin{pmatrix}
\chi(t) & 0\\0 & \chi'(t).
\end{pmatrix}
\begin{pmatrix}\Xi(T_{\widetilde{s}}) & 0\\ a_s(\widetilde{s}) & \Xi'(T_{\widetilde{s}})\end{pmatrix}
=
\begin{pmatrix}\Xi(T_{t\widetilde{s}}) & 0\\ a_s(t\widetilde{s}) & \Xi'(T_{t\widetilde{s}})\end{pmatrix}.
\]
Hence $a_s(t\widetilde{s}) = \chi'(t)a_s(\widetilde{s})$.
Similarly we have $a_s(\widetilde{s}t) = a_s(\widetilde{s})\chi(t)$.
Hence $a_s\in C_s$.

Now we check the conditions that the map defines an action of $\mathcal{H}^\aff$.
Since we have
\[
\begin{pmatrix}
\Xi(T_{\widetilde{s}}) & 0\\ a_s(\widetilde{s}) & \Xi'(T_{\widetilde{s}})
\end{pmatrix}^2
=
\begin{pmatrix}
\Xi(T_{\widetilde{s}})^2 & 0\\ a_s(\widetilde{s})(\Xi(T_{\widetilde{s}}) + \Xi'(T_{\widetilde{s}})) & \Xi'(T_{\widetilde{s}})^2
\end{pmatrix},
\]
this satisfies the quadratic relation $T_{\widetilde{s}}^2 = T_{\widetilde{s}}c_{\widetilde{s}}$ if and only if
\begin{equation}\label{eq:extension satisfies quadratic relation}
a_s(\widetilde{s})(\Xi(T_{\widetilde{s}}) + \Xi'(T_{\widetilde{s}})) = a_s(\widetilde{s})\chi(c_{\widetilde{s}}).
\end{equation}

If $s\in A_1(\Xi,\Xi')$, then $a_s(\widetilde{s}) = 0$ or $\chi(c_{\widetilde{s}}) = 0$, namely $a_s = 0$ or $s\notin S_{\aff,\chi}$.

If $s\in A_2(\Xi,\Xi')$, then $a_s =0$ or $\Xi(T_{\widetilde{s}}) = \chi(c_{\widetilde{s}})$.
Since $\Xi(T_{\widetilde{s}}) \ne 0$, we always have $\Xi(T_{\widetilde{s}}) = \chi(c_{\widetilde{s}})$.
Hence \eqref{eq:extension satisfies quadratic relation} is always satisfied.

If $s\in A_3(\Xi,\Xi')$, then $a_s =0$ or $\Xi'(T_{\widetilde{s}}) = \chi(c_{\widetilde{s}})$.
Note that if $a_s\ne 0$ then $s \chi = \chi'$, hence $S_{\aff,\chi} = S_{\aff,\chi'}$ and $\chi(c_{\widetilde{s}}) = \chi'(c_{\widetilde{s}})$.
Therefore under $a_s\ne 0$, we have $\Xi'(T_{\widetilde{s}}) = \chi(c_{\widetilde{s}})$ if and only if $\Xi'(T_{\widetilde{s}}) = \chi'(c_{\widetilde{s}})$.
This always hold since $\Xi'(T_{\widetilde{s}}) \ne 0$.
Hence \eqref{eq:extension satisfies quadratic relation} is always satisfied.

If $s\in A_4(\Xi,\Xi')$, then we have $\Xi(T_{\widetilde{s}}) = \chi(c_{\widetilde{s}})$.
Hence we have $a_s(\widetilde{s})\Xi'(T_{\widetilde{s}}) =0$.
Therefore we have $a_s = 0$ since $\Xi'(T_{\widetilde{s}}) \ne 0$.

Consequently the quadratic relation holds if and only if $a_s = 0$ or $s\in (A_1(\Xi,\Xi')\setminus S_{\aff,\chi})\cup A_2(\Xi,\Xi')\cup A_3(\Xi,\Xi')$.
The action of $T_{\widetilde{s}}$ is given by one of the following matrix:
\begin{equation}\label{eq:type of matrices}
\begin{pmatrix}
0 & 0\\ a_s(\widetilde{s}) & 0
\end{pmatrix},
\begin{pmatrix}
\chi(c_{\widetilde{s}}) & 0\\ a_s(\widetilde{s}) & 0
\end{pmatrix},
\begin{pmatrix}
0 & 0\\ a_s(\widetilde{s}) & \chi'(c_{\widetilde{s}})
\end{pmatrix},
\begin{pmatrix}
\chi(c_{\widetilde{s}}) & 0\\0 & \chi'(c_{\widetilde{s}})
\end{pmatrix}.
\end{equation}
Here each $\chi(c_{\widetilde{s}})$ and $\chi'(c_{\widetilde{s}})$ is not zero and in the first matrix, we assume that $s\notin S_{\aff,\chi}$ if $a_s\ne 0$.

Now we check the braid relations.
Let $s_1,s_2\in S_\aff$ and $\widetilde{s}_1,\widetilde{s}_2$ their lifts.
We consider a braid relation $s_1s_2\dotsm  = s_2s_1\dotsm $.
It is easy to see that the action satisfies the braid relation for some lifts $\widetilde{s}_1,\widetilde{s}_2$ if and only if it is satisfied for any lifts $\widetilde{s}_1,\widetilde{s}_2$.
Take $\widetilde{s}_1,\widetilde{s}_2$ such that $\widetilde{s}_1\widetilde{s}_2\dotsm = \widetilde{s}_2\widetilde{s}_1\dotsm $.
It is easy to see that if $s_1\in A_4(\Xi,\Xi')$ or $s_2\in A_4(\Xi,\Xi')$, then the braid relations hold automatically.
So we assume that $s_1,s_2\in A_1(\Xi,\Xi')\cup A_2(\Xi,\Xi')\cup A_3(\Xi,\Xi')$.

Assume that $s_1\in A_1(\Xi,\Xi')$.
We have
\begin{gather*}
\begin{pmatrix}0 & 0\\ a_{s_1}(\widetilde{s}_1) & 0\end{pmatrix}
\begin{pmatrix}\Xi(T_{\widetilde{s}_2}) & 0\\ a_{s_2}(\widetilde{s}_2) & \Xi'(T_{\widetilde{s}_2})\end{pmatrix}=
\begin{pmatrix}
0 & 0\\ a_{s_1}(\widetilde{s}_1)\Xi(T_{\widetilde{s}_2}) & 0
\end{pmatrix},\\
\begin{pmatrix}0 & 0\\ a_{s_1}(\widetilde{s}_1) & 0\end{pmatrix}
\begin{pmatrix}\Xi(T_{\widetilde{s}_2}) & 0\\ a_{s_2}(\widetilde{s}_2) & \Xi'(T_{\widetilde{s}_2})\end{pmatrix}\begin{pmatrix}0 & 0\\ a_{s_1}(\widetilde{s}_1) & 0\end{pmatrix}
=
\begin{pmatrix}
0 & 0\\ 0 & 0
\end{pmatrix},\\
\begin{pmatrix}\Xi(T_{\widetilde{s}_2}) & 0\\ a_{s_2}(\widetilde{s}_2) & \Xi'(T_{\widetilde{s}_2})\end{pmatrix}\begin{pmatrix}0 & 0\\ a_{s_1}(\widetilde{s}_1) & 0\end{pmatrix}
=
\begin{pmatrix}
0 & 0 \\ \Xi'(T_{\widetilde{s}_2})a_{s_1}(\widetilde{s}_1) & 0
\end{pmatrix},\\
\begin{pmatrix}\Xi(T_{\widetilde{s}_2}) & 0\\ a_{s_2}(\widetilde{s}_2) & \Xi'(T_{\widetilde{s}_2})\end{pmatrix}\begin{pmatrix}0 & 0\\ a_{s_1}(\widetilde{s}_1) & 0\end{pmatrix}
\begin{pmatrix}\Xi(T_{\widetilde{s}_2}) & 0\\ a_{s_2}(\widetilde{s}_2) & \Xi'(T_{\widetilde{s}_2})\end{pmatrix}=
\begin{pmatrix}
0 & 0 \\ \Xi'(T_{\widetilde{s}_2})\Xi(T_{\widetilde{s}_2})a_{s_1}(\widetilde{s}_1) & 0
\end{pmatrix},\\
\begin{pmatrix}\Xi(T_{\widetilde{s}_2}) & 0\\ a_{s_2}(\widetilde{s}_2) & \Xi'(T_{\widetilde{s}_2})\end{pmatrix}\begin{pmatrix}0 & 0\\ a_{s_1}(\widetilde{s}_1) & 0\end{pmatrix}
\begin{pmatrix}\Xi(T_{\widetilde{s}_2}) & 0\\ a_{s_2}(\widetilde{s}_2) & \Xi'(T_{\widetilde{s}_2})\end{pmatrix}\begin{pmatrix}0 & 0\\ a_{s_1}(\widetilde{s}_1) & 0\end{pmatrix}=
\begin{pmatrix}
0 & 0\\ 0 & 0
\end{pmatrix}.
\end{gather*}
Hence the braid relation is satisfied if and only if
\begin{itemize}
\item $a_{s_1} = 0$
\item or $\Xi(T_{\widetilde{s}_2}) = \Xi'(T_{\widetilde{s}_2})$ and the order of $s_1s_2$ is $2$
\item or $\Xi(T_{\widetilde{s}_2})\Xi'(T_{\widetilde{s}_2}) = 0$ and the order of $s_1s_2$ is $3$
\item or the order of $s_1s_2$ is greater than $3$.
\end{itemize}
If $s_2\in A_1(\Xi,\Xi')$, then the condition always holds.
If $s_2\in A_2(\Xi,\Xi')\cup A_3(\Xi,\Xi')$, then the condition holds if and only if $a_{s_1} = 0$ or the order of $s_1s_2$ is not $2$, namely $(s_1s_2)^2 \ne 1$.

Replacing $s_1$ with $s_2$, if $s_2\in A_1(\Xi,\Xi')$, we have the similar condition.

Assume that $s_1,s_2\in A_2(\Xi,\Xi')$.
We have
\[
\begin{pmatrix}\chi(c_{\widetilde{s}_1}) & 0\\ a_{s_1}(\widetilde{s}_1) & 0\end{pmatrix}
\begin{pmatrix}\chi(c_{\widetilde{s}_2}) & 0\\ a_{s_2}(\widetilde{s}_2) & 0\end{pmatrix}
=
\begin{pmatrix}\chi(c_{\widetilde{s}_1})\chi(c_{\widetilde{s}_2}) & 0\\ a_{s_1}(\widetilde{s}_1)\chi(c_{\widetilde{s}_2}) & 0\end{pmatrix}
=
\begin{pmatrix}\chi(c_{\widetilde{s}_1}) & 0\\ a_{s_1}(\widetilde{s}_1) & 0\end{pmatrix}\chi(c_{\widetilde{s}_2}).
\]
By this calculation, the braid relation is satisfied if and only if $a_{s_1}(\widetilde{s}_1)\chi(c_{\widetilde{s}_2})\dotsm = a_{s_2}(\widetilde{s}_2)\chi(c_{\widetilde{s}_1})\dotsm$.
By \cite[Proposition~4.13 (6)]{MR3484112}, we have $c_{\widetilde{s}_1}c_{\widetilde{s}_2}\dotsm = c_{\widetilde{s}_2}c_{\widetilde{s}_1}\dotsm$.
Hence the braid relation is satisfied if and only if $a_{s_1}(\widetilde{s}_1)\chi(c_{\widetilde{s}_1})^{-1} = a_{s_2}(\widetilde{s}_2)\chi(c_{\widetilde{s}_2})^{-1}$, namely $a_{s_1}\chi(c_{s_1})^{-1} = a_{s_2}\chi(c_{s_2})^{-1}$.
By a similar calculation, if $s_1,s_2\in A_3(\Xi,\Xi')$, then the braid relation is satisfied if and only if $a_{s_1}\chi'(c_{s_1})^{-1} = a_{s_2}\chi'(c_{s_2})^{-1}$.

Finally we assume that $s_1\in A_2(\Xi,\Xi')$ and $s_2\in A_3(\Xi,\Xi')$.
We have
\begin{gather*}
\begin{pmatrix}\chi(c_{\widetilde{s}_1}) & 0\\ a_{s_1}(\widetilde{s}_1) & 0\end{pmatrix}
\begin{pmatrix}0 & 0\\ a_{s_2}(\widetilde{s}_2) & \chi'(c_{\widetilde{s}_2})\end{pmatrix}
=
\begin{pmatrix}
0 & 0\\ 0 & 0
\end{pmatrix},\\
\begin{pmatrix}0 & 0\\ a_{s_2}(\widetilde{s}_2) & \chi'(c_{\widetilde{s}_2})\end{pmatrix}
\begin{pmatrix}\chi(c_{\widetilde{s}_1}) & 0\\ a_{s_1}(\widetilde{s}_1) & 0\end{pmatrix}
=
\begin{pmatrix}
0 & 0\\ a_{s_2}(\widetilde{s}_2)\chi(c_{\widetilde{s}_1}) + a_{s_1}(\widetilde{s}_1)\chi'(c_{\widetilde{s}_2}) & 0
\end{pmatrix},\\
\begin{pmatrix}0 & 0\\ a_{s_2}(\widetilde{s}_2) & \chi'(c_{\widetilde{s}_2})\end{pmatrix}
\begin{pmatrix}\chi(c_{\widetilde{s}_1}) & 0\\ a_{s_1}(\widetilde{s}_1) & 0\end{pmatrix}
\begin{pmatrix}0 & 0\\ a_{s_2}(\widetilde{s}_2) & \chi'(c_{\widetilde{s}_2})\end{pmatrix}
=
\begin{pmatrix}
0 & 0\\ 0 & 0
\end{pmatrix}.
\end{gather*}
Hence the braid relation is satisfied if and only if
\begin{itemize}
\item $a_{s_2}(\widetilde{s}_2)\chi(c_{\widetilde{s}_1}) + a_{s_1}(\widetilde{s}_1)\chi'(c_{\widetilde{s}_2}) = 0$ and the order of $s_1s_2$ is $2$.
\item or the order of $s_1s_2$ is greater than $2$.
\end{itemize}
We notice that $a_{s_2}(\widetilde{s}_2)\chi(c_{\widetilde{s}_1}) + a_{s_1}(\widetilde{s}_1)\chi'(c_{\widetilde{s}_2}) = 0$ if and only if $a_{s_1}\chi(c_{s_1})^{-1} + a_{s_2}\chi'(c_{s_2})^{-1} = 0$.

We get the following table which shows the condition for the braid relation:
\begin{center}
\begin{tabular}{c|c|c|c|c}
\diagbox{$s_2$}{$s_1$}& $A_1$ & $A_2$ & $A_3$ & $A_4$\\\hline
$A_1$ & always & \multicolumn{2}{c|}{$a_{s_2} = 0$ or $(s_1s_2)^2\ne 1$} & always \\\hline
\raisebox{1.3em}{$A_2$}  & \multirow{2}{*}{\shortstack{$a_{s_1}  = 0$\\ or $(s_1s_2)^2\ne 1$}} & \raisebox{.7em}{\shortstack{$a_{s_1}\chi(c_{s_1})^{-1}$\\\quad $= a_{s_2}\chi(c_{s_2})^{-1}$}} & \shortstack{$a_{s_1}\chi'(c_{s_1})^{-1} +$\\ $a_{s_2}\chi(c_{s_2})^{-1} = 0$\\ or $(s_1s_2)^2 \ne 1$} & \raisebox{1.4em}{always}\\\cline{1-1}\cline{3-5}
\raisebox{1.4em}{$A_3$}  &  & \shortstack{$a_{s_1}\chi(c_{s_1})^{-1} +$\\ $a_{s_2}\chi'(c_{s_2})^{-1} = 0$\\ or $(s_1s_2)^2 \ne 1$} & \raisebox{.7em}{\shortstack{$a_{s_1}\chi'(c_{s_1})^{-1}$\\\quad$ = a_{s_2}\chi'(c_{s_2})^{-1}$}}& \raisebox{1.4em}{always}\\\hline
$A_4$ & always & always & always & always
\end{tabular}
\end{center}

Now we assume that $(a_s)\in \bigoplus_{s \in S_\aff}C_s$ defines an action of $\mathcal{H}$.
First recall that $C_s\ne 0$ if and only if $s\chi = \chi'$.
Hence $a_s\ne 0$ implies $s\chi = \chi'$.
Since the quadratic relations hold, if $a_s\ne 0$ then $s\in (A_1(\Xi,\Xi')\setminus S_{\aff,\chi})\cup S_2(\Xi,\Xi')$.
If $s\in A_1(\Xi,\Xi')\setminus S_{\aff,\chi}$ and $(s s_1)^2 = 1$ for some $s_1\in S_2(\Xi,\Xi')$, then the table says that $a_s = 0$.
Therefore if $a_s\ne 0$ then $s\in S_1(\Xi,\Xi')\cup S_2(\Xi,\Xi')$.
Hence again by the table, $(a_s)$ belongs to $E(\Xi,\Xi')$.

Conversely, if $(a_s)\in E(\Xi,\Xi')$, then $a_s\ne 0$ implies $s\in S_1(\Xi,\Xi')\cup S_2(\Xi,\Xi')\subset (A_1(\Xi,\Xi')\setminus S_{\aff,\chi})\cup S_2(\Xi,\Xi')$.
Hence each $T_{\widetilde{s}}$ satisfies the quadratic relation.
Let $s_1,s_2\in S_{\aff}$.
If $s_1\in A_1(\Xi,\Xi')$ and $s_2\in A_2(\Xi,\Xi')\cup A_3(\Xi,\Xi')$, then the definition of $S_1(\Xi,\Xi')$ says that $(s_1s_2)^2 \ne 1$ or $a_{s_1} = 0$.
Then by the table the braid relation for $s_1,s_2$ holds.
For other cases, the condition on $E_2(\Xi,\Xi')$ and the table implies that the braid relation holds too.

Therefore the map $E(\Xi,\Xi')\to \Ext^1_{\mathcal{H}^\aff}(\Xi,\Xi')$ is well-defined and surjective.

Assume that the extension given by $(a_s)$ splits, namely each matrices in \eqref{eq:type of matrices} are simultaneous diagonalizable.
If $s\in A_1(\Xi,\Xi')$, then the matrix corresponding to $s$ is diagonalizable if and only if $a_s = 0$.
Let $s_1,s_2\in A_2(\Xi,\Xi')$.
Then by $a_{s_1}\chi(c_{s_1})^{-1} = a_{s_2}\chi(c_{s_2})^{-1}$, the matrices corresponding to $s_1,s_2$ commutes with each other.
Hence these matrices are simultaneous diagonalizable.
Similarly, matrices corresponding to $A_3(\Xi,\Xi')$ are simultaneous diagonalizable.

If $s_1\in A_2(\Xi,\Xi')$ and $s_2\in A_3(\Xi,\Xi')$, then the corresponding matrices are
\[
\begin{pmatrix}
\chi(c_{\widetilde{s}_1}) & 0\\ a_s(\widetilde{s}_1) & 0
\end{pmatrix},
\begin{pmatrix}
0 & 0\\ a_s(\widetilde{s}_2) & \chi'(c_{\widetilde{s}_2})
\end{pmatrix}.
\]
and these commute with each other if and only if $a_{s_1}\chi(c_{s_1})^{-1} + a_{s_2}\chi(c_{s_2})^{-1} = 0$.
Hence the kernel is \eqref{eq:kernel of E->Ext1 (2)}.
\end{proof}

\begin{rem}
Let $\omega\in \Omega(1)_{\Xi}\cap \Omega(1)_{\Xi'}$.
Then $\omega$ acts on $\Ext^1_{\mathcal{H}^\aff}(\Xi,\Xi')$ and by this action $\Ext^1_{\mathcal{H}^\aff}(\Xi,\Xi')$ is a right $\Omega(1)_{\Xi}\cap \Omega(1)_{\Xi'}$-module.
We also have the action of $\omega$ on $E(\Xi,\Xi')$ as follows:
Let $s\in S_\aff$ and put $s_1 = \omega^{-1} s\omega\in S_\aff$.
Then $a\mapsto (\widetilde{s}_1\mapsto a(\omega\widetilde{s}_1\omega^{-1}))$ gives an isomorphism $C_s\simeq C_{s_1}$.
We denote this map by $a\mapsto a\cdot \omega$.
Then the action is given by $(a_s)\mapsto (a_{\omega^{-1}s\omega}\cdot \omega)$.
This action commutes with the action of $\omega$ on $\Ext^1_{\mathcal{H}^\aff}(\Xi,\Xi')$.
\end{rem}

\subsection{Semi-direct product}
The argument in this subsection is general.
Let $A$ be a $C$-algebra and $\Gamma$ a group acting on $A$.
We assume that a finite commutative normal subgroup $\Gamma'\subset \Gamma$ and an embedding $C[\Gamma']\hookrightarrow A$ are given.
Here we assume that for $\gamma'\in \Gamma'$, the action of $\gamma'$ on $A$ as an element in $\Gamma$ is given by $a\mapsto \gamma'a(\gamma')^{-1}$.
We put $B = C[\Gamma]\otimes_{C[\Gamma']}A$ and define a multiplication by $(\gamma_1\otimes a_1)(\gamma_2\otimes a_2) = \gamma_1\gamma_2\otimes (\gamma_2^{-1}\cdot a_1)a_2$ for $a_1,a_2\in A$ and $\gamma_1,\gamma_2\in \Gamma$.
Of course, the example in our mind is $A = \mathcal{H}^\aff$, $\Gamma = \Omega(1)$ and $\Gamma' = Z_\kappa$.
We have $B = \mathcal{H}$.

Let $M_1,M_2$ be right $B$-modules.
Then $\Hom_A(M_1,M_2)$ has the structure of a $\Gamma$-module defined by $(f\gamma)(m) = f(m\gamma^{-1})\gamma$.
This action factors through $\Gamma \to \Gamma/\Gamma'$ and we have $\Hom_B(M_1,M_2) = \Hom_A(M_1,M_2)^{\Gamma/\Gamma'}$.
Let $N$ be a $\Gamma/\Gamma'$-module and $\varphi\in \Hom_{\Gamma/\Gamma'}(N,\Hom_A(M_1,M_2))$.
Set $f\colon N\otimes M_1\to M_2$ by $f(n\otimes m) = \varphi(n)(m)$ for $n\in N$ and $m\in M_1$.
Then for $\gamma\in \Gamma$, we have $f(n\gamma\otimes m\gamma) = \varphi(n\gamma)(m\gamma) = \varphi(n)(m)\gamma = f(n\otimes m)\gamma$.
Namely $f$ is $\Gamma$-equivariant.
We define an action of $a\in A$ on $N\otimes M_1$ by $(n\otimes m)a = n\otimes ma$.
Then it coincides with the action of $\Gamma$ on $C[\Gamma']$ and it gives an action of $B$.
This correspondence gives an isomorphism
\[
\Hom_{\Gamma/\Gamma'}(N,\Hom_A(M_1,M_2))\simeq \Hom_B(N\otimes M_1,M_2).
\]
In particular, if $M_2$ is an injective $B$-module, then $\Hom_A(M_1,M_2)$ is an injective $\Gamma/\Gamma'$-module.
Therefore, from $\Hom_B(M_1,M_2) = \Hom_A(M_1,M_2)^{\Gamma/\Gamma'}$, we get a spectral sequence
\[
E_2^{ij} = H^i(\Gamma/\Gamma',\Ext^j_A(M_1,M_2))\Rightarrow \Ext^{i + j}_B(M_1,M_2).
\]
In particular, we have an exact sequence
\begin{equation}\label{eq:ex seq from spectral seq}
0\to H^1(\Gamma/\Gamma',\Hom_A(M_1,M_2))\to \Ext^1_B(M_1,M_2)\to \Ext^1_A(M_1,M_2)^{\Gamma/\Gamma'}
\end{equation}

Moreover, we assume the following situation.
Let $\Gamma_1$ be a finite index subgroup of $\Gamma$ which contains $\Gamma'$ and put $B_1 = A\otimes_{C[\Gamma']}C[\Gamma_1]$.
Then this is a subalgebra of $B$ and $B$ is a free left $B_1$-module with a basis given by a complete representative of $\Gamma_1\backslash\Gamma$.
Assume that $M_1$ has a form $L_1\otimes_{B_1}B$ for some $B_1$-module $L_1$.
We have $M_1 = \bigoplus_{\gamma\in \Gamma_1\backslash\Gamma}L_1\otimes \gamma$.
Since $B$ is flat over $B_1$, we have
\[
\Ext_B^1(M_1,M_2)\simeq \Ext_{B_1}^1(L_1,M_2).
\]

We have a $B_1$-module embedding $L_1\hookrightarrow M_1$.
This is in particular an $A$-homomorphism and we get
\[
\Ext^i_A(M_1,M_2)\to \Ext^1_A(L_1,M_2)
\]
Since $L_1\hookrightarrow M_1$ is a $B_1$-homomorphism, this is a $\Gamma_1$-homomorphism.
Hence this induces
\[
\Ext^i_A(M_1,M_2)\to \Ind_{\Gamma_1}^{\Gamma}(\Ext^i_A(L_1,M_2)).
\]
The decomposition $M_1 = \bigoplus_{\gamma\in \Gamma_1\backslash\Gamma}L_1\otimes \gamma$ respects the $A$-action.
Hence 
\[
\Ext^i_A(M_1,M_2) = \bigoplus_{\gamma\in \Gamma_1\backslash\Gamma} \Ext^i_A(L_1\otimes \gamma,M_2) = \bigoplus_{\gamma\in \Gamma_1\backslash\Gamma} \Ext^i_A(L_1,M_2)\gamma.
\]
Therefore the above homomorphism is an isomorphism
\[
\Ext^i_A(M_1,M_2)\simeq \Ind_{\Gamma_1}^{\Gamma}(\Ext^i_A(L_1,M_2)).
\]
This implies
\begin{gather*}
H^1(\Gamma/\Gamma',\Hom_A(M_1,M_2)) \simeq H^1(\Gamma_1/\Gamma',\Hom_A(L_1,M_2)),\\
\Ext^1_A(M_1,M_2)^{\Gamma/\Gamma'} \simeq \Ext^1_A(L_1,M_2)^{\Gamma_1/\Gamma'}
\end{gather*}
and a commutative diagram
\[
\begin{tikzcd}[column sep=.5cm]
0\arrow[r] & H^1(\Gamma/\Gamma',\Hom_A(M_1,M_2)) \arrow[r]\arrow[d,dash,"\wr"] & \Ext^1_B(M_1,M_2) \arrow[r]\arrow[d,dash,"\wr"] & \Ext^1_A(M_1,M_2)^{\Gamma/\Gamma'}\arrow[d,dash,"\wr"]\\
0\arrow[r] & H^1(\Gamma_1/\Gamma',\Hom_A(L_1,M_2)) \arrow[r] & \Ext^1_{B_1}(L_1,M_2) \arrow[r] & \Ext^1_A(L_1,M_2)^{\Gamma_1/\Gamma'}.
\end{tikzcd}
\]

We also assume that there exists a finite index subgroup $\Gamma_2$ of $\Gamma$ which contains $\Gamma'$ and $M_2 = L_2\otimes_{B_2}B$ where $B_2 = A\otimes_{C[\Gamma']}C[\Gamma_2]$.
Let $\{\gamma_1,\dots,\gamma_r\}$ be a set of complete representatives of $\Gamma_2\backslash\Gamma/\Gamma_1$.
Then the decomposition $M_2 = \bigoplus_{\gamma\in \Gamma_2\backslash \Gamma}L_2\otimes \gamma = \bigoplus_i\bigoplus_{\gamma\in (\Gamma_1\cap \gamma_i^{-1}\Gamma_2\gamma_i)\backslash\Gamma_1}L_2\otimes \gamma_i\gamma$ gives
\[
M_2|_{B_1} = \bigoplus_i L_2\gamma_i\otimes_{B_1\cap \gamma_i^{-1}B_2\gamma_i}B_1,
\]
where $L_2\gamma_i$ is a $\gamma_i^{-1}B_2\gamma_i$-module defined by: $L_2\gamma_i = L_2$ as a vector space and the action is given by $l (\gamma_i^{-1}b\gamma_i) = l\cdot b$ for $l\in L_2$ and $b\in B_2$, here $\cdot$ is the original action of $b\in B_2$ on $L_2$.
From this isomorphism, we get
\[
H^i(\Gamma_1/\Gamma',\Ext^j_A(L_1,M_2)) \simeq \bigoplus_i H^i((\Gamma_1\cap \gamma_i^{-1}\Gamma_2\gamma_i)/\Gamma',\Ext^j_A(L_1,L_2\gamma_i))
\]
and
\[
\Ext^i_{B_1}(L_1,M_2) = \bigoplus_i\Ext^i_{B_1\cap \gamma_i^{-1}B_2\gamma_i}(L_1,L_2\gamma_i)
\]
which is compatible with the exact sequence in \eqref{eq:ex seq from spectral seq}.

Set $A = \mathcal{H}^\aff$, $\Gamma = \Omega(1)$, $\Gamma' = Z_\kappa$, $\Gamma_1 = \Omega(1)_\Xi$ and $\Gamma_2 = \Omega(1)_{\Xi'}$.
Then we get the following lemma.
Recall that $\Omega$ is assumed to be commutative.
\begin{lem}\label{lem:exact seq from spectral seq}
Let $\chi,\chi'$ be characters of $Z_\kappa$ and $J\subset S_{\aff,\chi},J'\subset S_{\aff,\chi'}$.
Put $\Xi = \Xi_{\chi,J}$, $\Xi' = \Xi_{\chi',J'}$ and let $V,V'$ be irreducible $C[\Omega(1)_\Xi]$, $C[\Omega(1)_{\Xi'}]$-modules, respectively.
Let $\{\omega_1,\dots,\omega_r\}$ be a set of complete representatives of $\Omega_\Xi\backslash\Omega/\Omega_{\Xi'} = \Omega/\Omega_{\Xi}\Omega_{\Xi'}$ and define $\Xi'_i$ by $\Xi'_i(X) = \Xi'(\omega_i X\omega_i^{-1})$.
Consider the representation of $C[\Omega(1)_{\Xi'_i}] = \omega_i^{-1}C[\Omega(1)_{\Xi'}]\omega_i$ twisting $V'$ by $\omega_i$ and we denote it by $V'_i$.
Put $\Omega_{\Xi,\Xi'} = \Omega_{\Xi}\cap \Omega_{\Xi'}$ and $\mathcal{H}_{\Xi,\Xi'} = \mathcal{H}_{\Xi}\cap \mathcal{H}_{\Xi'}$.
Then we have a commutative diagram with exact rows:
\[
\begin{tikzcd}
0\arrow[d] & 0\arrow[d]\\
H^1(\Omega,\Hom_{\mathcal{H}^\aff}(\pi_{J,\chi,V},\pi_{J',\chi',V'}))\arrow[d]\arrow[r,dash,"\sim"]  & \displaystyle\bigoplus_i H^1(\Omega_{\Xi,\Xi'_i},\Hom_{\mathcal{H}^\aff}(\Xi\otimes V,\Xi'_i\otimes V'_i))\arrow[d] \\
\Ext^1_{\mathcal{H}}(\pi_{\chi,J,V},\pi_{\chi',J',V'})\arrow[d]\arrow[r,dash,"\sim"] & \displaystyle\bigoplus_i\Ext^1_{\mathcal{H}_{\Xi,\Xi'_i}}(\Xi\otimes V,\Xi'_i\otimes V'_i)\arrow[d] \\
\Ext^1_{\mathcal{H}^\aff}(\pi_{\chi,J,V},\pi_{\chi',J',V'})^{\Omega} \arrow[r,dash,"\sim"]& \displaystyle\bigoplus_i\Ext^1_{\mathcal{H}^\aff}(\Xi\otimes V,\Xi'_i\otimes V'_i)^{\Omega_{\Xi,\Xi'_i}}
\end{tikzcd}\]
\end{lem}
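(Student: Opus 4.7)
The plan is to apply the general machinery of the preceding subsection verbatim, with $A=\mathcal{H}^\aff$, $\Gamma=\Omega(1)$, $\Gamma'=Z_\kappa$, $B=\mathcal{H}$, $\Gamma_1=\Omega(1)_{\Xi}$ and $\Gamma_2=\Omega(1)_{\Xi'}$, so that $B_1=\mathcal{H}_{\Xi}$, $B_2=\mathcal{H}_{\Xi'}$, and
\[
M_1=\pi_{\chi,J,V}=(\Xi\otimes V)\otimes_{\mathcal{H}_{\Xi}}\mathcal{H}=L_1\otimes_{B_1}B,
\]
with $L_1=\Xi\otimes V$, and similarly $M_2=\pi_{\chi',J',V'}=L_2\otimes_{B_2}B$ with $L_2=\Xi'\otimes V'$. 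Feeding this into the exact sequence~\eqref{eq:ex seq from spectral seq} produces the left-hand (vertical) three-term exact sequence of the diagram, while the observation that $\Ext^1_B(M_1,M_2)\simeq\Ext^1_{B_1}(L_1,M_2)$ (since $B$ is flat over $B_1$) and the analogous isomorphisms on $\Hom$ and on $\Ext^1$ of $A$-modules combine with the commutative diagram displayed at the end of that subsection to give the right-hand vertical three-term exact sequence. So the first step is simply to instantiate those formulas; no extra work is needed for the vertical sequences.

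Next I would compute the horizontal arrows, which come from the double coset decomposition of $M_2|_{B_1}$. Since $\Omega$ is commutative (and since every $Z_\kappa$-orbit of characters is trivial because $Z_\kappa$ is central in $\Omega(1)$), the double coset set $\Omega_{\Xi}\backslash\Omega/\Omega_{\Xi'}$ coincides with $\Omega/\Omega_{\Xi}\Omega_{\Xi'}$, and the intersection $\Gamma_1\cap\gamma_i^{-1}\Gamma_2\gamma_i$ coincides with $\Omega(1)_{\Xi}\cap\Omega(1)_{\Xi'_i}$, where $\Xi'_i(X)=\Xi'(\omega_i X\omega_i^{-1})$ is the translate of $\Xi'$ by $\omega_i$. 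The twist $L_2\gamma_i$ of the general framework is by construction the representation of $\mathcal{H}_{\Xi'_i}=\omega_i^{-1}\mathcal{H}_{\Xi'}\omega_i$ obtained by twisting $\Xi'\otimes V'$ by $\omega_i$, which is $\Xi'_i\otimes V'_i$ in the notation of the lemma. Applying the decompositions
\[
H^i(\Gamma_1/\Gamma',\Ext^j_A(L_1,M_2))\simeq\bigoplus_i H^i((\Gamma_1\cap\gamma_i^{-1}\Gamma_2\gamma_i)/\Gamma',\Ext^j_A(L_1,L_2\gamma_i))
\]
and
\[
\Ext^i_{B_1}(L_1,M_2)\simeq\bigoplus_i\Ext^i_{B_1\cap\gamma_i^{-1}B_2\gamma_i}(L_1,L_2\gamma_i)
\]
(which were recorded in the general setup as being compatible with the exact sequence~\eqref{eq:ex seq from spectral seq}) yields the horizontal isomorphisms in all three rows of the diagram, together with the compatibility between them.

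Finally, I would just verify the translations of notation: $\mathcal{H}_{\Xi,\Xi'_i}=\mathcal{H}_{\Xi}\cap\mathcal{H}_{\Xi'_i}$ is the image of $B_1\cap\gamma_i^{-1}B_2\gamma_i$, and the $\Omega_{\Xi,\Xi'_i}/Z_\kappa$-invariants and $H^1$'s appearing on the right-hand side match those produced by the general machinery. The main (and only real) obstacle is a bookkeeping one, namely to confirm that the twisting conventions used to define $\Xi'_i$ and $V'_i$ coincide with the algebra-theoretic twist $L_2\mapsto L_2\gamma_i$ appearing in the general framework; once this identification is in place the diagram follows formally. No new algebraic computation is needed beyond the general semidirect-product discussion already carried out.
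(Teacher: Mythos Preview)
Your proposal is correct and matches the paper's approach exactly: the paper does not give a separate proof of this lemma, but simply states (in the sentence immediately preceding it) ``Set $A = \mathcal{H}^\aff$, $\Gamma = \Omega(1)$, $\Gamma' = Z_\kappa$, $\Gamma_1 = \Omega(1)_\Xi$ and $\Gamma_2 = \Omega(1)_{\Xi'}$. Then we get the following lemma,'' which is precisely the instantiation you carry out. Your additional bookkeeping verification that $L_2\gamma_i$ identifies with $\Xi'_i\otimes V'_i$ and that $\gamma_i^{-1}\Gamma_2\gamma_i=\Omega(1)_{\Xi'_i}$ is the only content not already spelled out in the general framework, and it is correct.
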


The following theorem will be proved in subsection \ref{subsec:proof of surjectivity, from spectral sequence}.
\begin{thm}\label{thm:surjectivity of exact sequence for supersingulars}
The map $\Ext^1_{\mathcal{H}_{\Xi,\Xi'}}(\Xi\otimes V,\Xi'\otimes V')\to \Ext^1_{\mathcal{H}^\aff}(\Xi\otimes V,\Xi'\otimes V')^{\Omega_{\Xi,\Xi'}}$ is surjective.
\end{thm}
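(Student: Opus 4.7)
The map in question fits, by construction in subsection~3.2, into the five-term exact sequence attached to the Hochschild-Serre spectral sequence with $A = \mathcal{H}^\aff$, $B = \mathcal{H}_{\Xi,\Xi'}$, $\Gamma = \Omega(1)_{\Xi,\Xi'}$, and $\Gamma' = Z_\kappa$. Extending \eqref{eq:ex seq from spectral seq} one step gives
\[
\Ext^1_{\mathcal{H}_{\Xi,\Xi'}}(\Xi\otimes V,\Xi'\otimes V')\to \Ext^1_{\mathcal{H}^\aff}(\Xi\otimes V,\Xi'\otimes V')^{\Omega_{\Xi,\Xi'}}\xrightarrow{d_2} H^2(\Omega_{\Xi,\Xi'}/Z_\kappa,\Hom_{\mathcal{H}^\aff}(\Xi\otimes V,\Xi'\otimes V')),
\]
so it suffices to show $d_2=0$. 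The plan is to lift each invariant $\mathcal{H}^\aff$-extension to an honest $\mathcal{H}_{\Xi,\Xi'}$-module extension.

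Because $\#Z_\kappa$ is prime to $p$, the restrictions $\Xi\otimes V|_{\mathcal{H}^\aff}$ and $\Xi'\otimes V'|_{\mathcal{H}^\aff}$ are direct sums of the characters $\Xi$ and $\Xi'$, so
\[
\Ext^i_{\mathcal{H}^\aff}(\Xi\otimes V,\Xi'\otimes V')\simeq \Ext^i_{\mathcal{H}^\aff}(\Xi,\Xi')\otimes_C \Hom_C(V,V').
\]
Given an invariant class, choose a $C[Z_\kappa]$-splitting, so the extension is realised as $M=(\Xi'\otimes V')\oplus (\Xi\otimes V)$ with each $T_{\widetilde{s}}$ acting by a lower-triangular block matrix of diagonal $(\Xi(T_{\widetilde{s}})\cdot\id,\Xi'(T_{\widetilde{s}})\cdot\id)$ and off-diagonal entry $\epsilon(\widetilde{s})\in\Hom_C(V,V')$. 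By Proposition~\ref{prop:extension between supersingulars, for H_aff} (tensored with $\Hom_C(V,V')$) this $\epsilon$ is determined, up to a change of splitting, by an element of $E(\Xi,\Xi')\otimes\Hom_C(V,V')$ on which $\Omega_{\Xi,\Xi'}$ acts by $(a_s)\omega = (a_{\omega^{-1}s\omega}\cdot \omega)$ as in the Remark after that Proposition.

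The next step is to extend the $\mathcal{H}^\aff$-action on $M$ to the full algebra $\mathcal{H}_{\Xi,\Xi'}$ by letting each $\omega\in\Omega(1)_{\Xi,\Xi'}$ act by the block matrix with diagonal $(V(\omega),V'(\omega))$ and some to-be-determined off-diagonal $\eta(\omega)\in\Hom_C(V,V')$. Multiplicativity of these matrices is exactly the $1$-cocycle condition for $\eta$ with values in $\Hom_C(V,V')$ equipped with the action $f\mapsto V(\omega) f V'(\omega)^{-1}$; and commutativity with each $T_{\widetilde{s}}$ (which in $\mathcal{H}_{\Xi,\Xi'}$ reads $T_{\widetilde{s}}\omega = \omega\, T_{\omega^{-1}\widetilde{s}\omega}$, since $\Omega(1)$ consists of length-zero elements) reduces after cancellation of the diagonal parts to a linear identity tying $\eta(\omega)$ to the difference $V(\omega)\epsilon(T_{\omega^{-1}\widetilde{s}\omega})-\epsilon(T_{\widetilde{s}})V'(\omega)$. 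The invariance of the class under $\omega$ is precisely the assertion that, for each $\omega$, some $\delta(\omega)\in\Hom_C(V,V')$ solves this identity for every $\widetilde{s}\in S_\aff(1)$ at once.

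The core of the argument, and the step I expect to be the main obstacle, is to make the individual choices $\delta(\omega)$ coherent. First one uses that $\#Z_\kappa$ is invertible in $C$ to average over $Z_\kappa$, reducing to choosing a function on $\Omega(1)_{\Xi,\Xi'}/Z_\kappa$. The remaining obstruction to cocycle-ness lives in $H^2(\Omega(1)_{\Xi,\Xi'}/Z_\kappa,\Hom_C(V,V'))$; its vanishing is established by an explicit computation, organised by the four subsets $A_1(\Xi,\Xi'),\dots,A_4(\Xi,\Xi')$ of Proposition~\ref{prop:extension between supersingulars, for H_aff} and using that $\Omega$ is commutative, which lets one check the cocycle relation one pair $(\omega_1,\omega_2)$ at a time and reduce to the explicit matrix shapes \eqref{eq:type of matrices}. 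Once such an $\eta$ is constructed, $M$ becomes an $\mathcal{H}_{\Xi,\Xi'}$-module whose class maps to the given invariant extension, proving surjectivity.
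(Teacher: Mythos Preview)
Your framework is right, but the ``core step'' you flag as the main obstacle is left as a bare assertion, and that is precisely where the proof lives. You say the obstruction to making the $\delta(\omega)$ into a $1$-cocycle lies in $H^2(\Omega_{\Xi,\Xi'},\Hom_C(V,V'))$ and that ``its vanishing is established by an explicit computation''; but you give no such computation, and in the case $\Xi=\Xi'$ that $H^2$ can genuinely be nonzero (e.g.\ the $\mathrm{PGL}_2$ example in the paper, where $\Omega_{\Xi,\Xi'}$ is a $2$-group in characteristic~$2$). So you cannot hope to show the whole $H^2$ vanishes; you must show that the specific $2$-cocycle arising as the obstruction is a coboundary, and your sketch gives no mechanism for this.

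The paper's argument avoids the $H^2$ obstruction altogether by exploiting the explicit structure of $\Ext^1_{\mathcal{H}^\aff}(\Xi,\Xi')$ established in Proposition~\ref{prop:extension between supersingulars, for H_aff} and the preceding analysis. The decomposition $\Ext^1_{\mathcal{H}^\aff}(\Xi,\Xi') = E'_1(\Xi,\Xi')\oplus E'_2(\Xi,\Xi')$ is $\Omega_{\Xi,\Xi'}$-stable, so one treats the two pieces separately. For the $E'_1$-part, the key point is that $E_1(\Xi,\Xi')\to E'_1(\Xi,\Xi')$ is \emph{injective} (see \eqref{eq:kernel of E->Ext1 (2)}): invariance of the extension class therefore lifts to genuine invariance of the cocycle $(f_s)$, namely $V'(\omega)f_s(\widetilde{s})V(\omega)^{-1}=f_{\omega s\omega^{-1}}(\omega\widetilde{s}\omega^{-1})$, and then the \emph{block diagonal} action $T_\omega\mapsto\mathrm{diag}(V(\omega),V'(\omega))$ already commutes with all $T_{\widetilde{s}}$---no $\eta$ is needed. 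For the $E'_2$-part, one shows directly that $\Omega_{\Xi,\Xi'}$ acts \emph{trivially} on $E'_2(\Xi,\Xi')$; hence an invariant class has the form $e_0\otimes\varphi$ with $\varphi\in\Hom_{\Omega(1)_{\Xi,\Xi'}}(V,V')$, and again the block diagonal action of $\Omega(1)_{\Xi,\Xi'}$ works. In both cases the lift is constructed with $\eta=0$, so the cocycle problem you pose never arises. What your outline is missing is exactly these two structural facts about $E'_1$ and $E'_2$.
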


\subsection{$\Ext^1_{\mathcal{H}^\aff}(\Xi\otimes V,\Xi'\otimes V')^{\Omega_{\Xi,\Xi'}}$}\label{subsec:Ext^1,Xi,V,H^aff}
To prove Theorem~\ref{thm:surjectivity of exact sequence for supersingulars}, we analyze $\Ext^1_{\mathcal{H}^\aff}(\Xi\otimes V,\Xi'\otimes V')^{\Omega_{\Xi,\Xi'}}$.
First we have
\[
\Ext^1_{\mathcal{H}^\aff}(\Xi\otimes V,\Xi'\otimes V')\simeq
\Ext^1_{\mathcal{H}^\aff}(\Xi,\Xi')\otimes\Hom_C(V,V')
\]
and the surjective homomorphism $E(\Xi,\Xi')\to \Ext^1_{\mathcal{H}^\aff}(\Xi,\Xi')$.
We have the decomposition $E(\Xi,\Xi') = E_1(\Xi,\Xi')\oplus E_2(\Xi,\Xi')$.
Let $E'_1(\Xi,\Xi')$ (resp.\ $E'_2(\Xi,\Xi')$) be the image of $E_1(\Xi,\Xi')$ (reps.\ $E_2(\Xi,\Xi')$).
By the description of the kernel \eqref{eq:kernel of E->Ext1 (2)}, we have:
\begin{itemize}
\item $\Ext^1_{\mathcal{H}^\aff}(\Xi,\Xi') = E'_1(\Xi,\Xi')\oplus E'_2(\Xi,\Xi')$.
\item $E_1(\Xi,\Xi')\xrightarrow{\sim} E'_1(\Xi,\Xi')$.
\item the dimension of the kernel of $E_2(\Xi,\Xi')\to E'_2(\Xi,\Xi')$ is at most one.
\end{itemize}
Define $E_i\ (i = 2,3)$ by $E_i = E_2(\Xi,\Xi')\cap \bigoplus_{s\in A_i(\Xi,\Xi')}C_s$.
Then $\dim E_2,\dim E_3\le 1$ and $E_i\ne 0$ if and only if for any $s\in A_i(\Xi,\Xi')$ we have $s\chi = \chi'$.

Assume that $s\chi = \chi'$ for any $s\in A_2(\Xi,\Xi')$.
Fix $s_0\in A_2(\Xi,\Xi')$.
Then $a = (a_s)\mapsto a_{s_0}\chi(c_{s_0})^{-1}$ gives an isomorphism $E_2\simeq C$.
Let $\omega\in \Omega_{\Xi,\Xi'}(1)$.
Then 
\[
(a\omega)_{s_0}\chi(c_{s_0})^{-1}
=
a_{\omega s\omega^{-1}}(\omega\widetilde{s_0}\omega^{-1})\chi(c_{\widetilde{s_0}})^{-1}.
\]
Since $\omega$ stabilizes $\chi$, we have $\chi(c_{\widetilde{s_0}}) = (\omega^{-1}\chi)(c_{\widetilde{s_0}}) = \chi(\omega\cdot c_{\widetilde{s_0}}) = \chi(c_{\omega\widetilde{s_0}\omega^{-1}})$.
Therefore we have
\begin{align*}
(a\omega)_{s_0}\chi(c_{s_0})^{-1}
& =
a_{\omega s\omega^{-1}}(\omega \widetilde{s_0}\omega^{-1})\chi(c_{\omega\widetilde{s_0}\omega^{-1}})^{-1}\\
& =
a_{\omega s\omega^{-1}}\chi(c_{\omega s\omega^{-1}})^{-1}\\
& =
a_{s}\chi(c_{s})^{-1}.
\end{align*}
Here the last equality follows from the definition of $E'_2(\Xi,\Xi')$.
Namely $\Omega_{\Xi,\Xi'}$ acts trivially on $E_2$.
By the same argument $\Omega_{\Xi,\Xi'}$ also acts trivially on $E_3$.
Therefore it also acts trivially on $E_2(\Xi,\Xi')$, hence on $E'_2(\Xi,\Xi')$.
Hence
\[
(E'_2(\Xi,\Xi')\otimes\Hom_C(V,V'))^{\Omega_{\Xi,\Xi'}} = E'_2(\Xi,\Xi')\otimes\Hom_{\Omega_{\Xi,\Xi'}}(V,V').
\]


\subsection{Proof of Theorem~\ref{thm:surjectivity of exact sequence for supersingulars}}\label{subsec:proof of surjectivity, from spectral sequence}
Now we prove Theorem~\ref{thm:surjectivity of exact sequence for supersingulars}.
Take $e\in \Ext^1_{\mathcal{H}^\aff}(\Xi\otimes V,\Xi'\otimes V')^{\Omega_{\Xi,\Xi'}}$ and first assume that $e\in E'_1(\Xi,\Xi')$.
Therefore $e$ gives $f_s\in C_s\otimes \Hom_C(V,V')$.
The space $C_s\otimes \Hom_C(V,V')$ is the space of functions $f_s$ on $\{\widetilde{s}\mid \text{$\widetilde{s}$ is a lift of $s$}\}$ with values in $\Hom_C(V,V')$ such that $f(t_1\widetilde{s}t_2) = \chi'(t_1)f(\widetilde{s})\chi(t_2)$ for $t_1,t_2\in Z_\kappa$.
Using this $f_s$, we define an $\mathcal{H}$-module structure on $V\oplus V'$ by
\[
T_{\widetilde{s}} \mapsto \begin{pmatrix} \Xi(T_{\widetilde{s}}) & 0\\ f_s(\widetilde{s}) & \Xi'(T_{\widetilde{s}})\end{pmatrix},\quad
T_\omega \mapsto \begin{pmatrix}V(\omega) & 0\\ 0 & V'(\omega)\end{pmatrix}
\]
where $\widetilde{s}\in S_\aff$, $s$ its image in $S_\aff$ and $f_s = 0$ if $s\notin S_1(\Xi,\Xi')$.
Since $e$ is $\Omega_{\Xi,\Xi'}$-invariant and $E_1(\Xi,\Xi')\to E'_1(\Xi,\Xi')$ is injective, we have $V'(\omega)f_{s}(\widetilde{s})V(\omega^{-1}) = f_{\omega s\omega^{-1}}(\omega\widetilde{s}\omega^{-1})$.
Hence
\begin{align*}
& \begin{pmatrix}V(\omega) & 0\\ 0 & V'(\omega)\end{pmatrix}
\begin{pmatrix} \Xi(T_{\widetilde{s}}) & 0\\ f_s(\widetilde{s}) & \Xi'(T_{\widetilde{s}})\end{pmatrix}
\begin{pmatrix}V(\omega) & 0\\ 0 & V'(\omega)\end{pmatrix}^{-1}\\
& =
\begin{pmatrix} \Xi(T_{\omega\widetilde{s}^{-1}}) & 0\\ f_s(\omega\widetilde{s}\omega^{-1}) & \Xi'(T_{\omega\widetilde{s}\omega^{-1}})\end{pmatrix}.
\end{align*}
Namely the above action gives an action of $\mathcal{H}_{\aff}C[\Omega_{\Xi,\Xi'}]$.
Hence this gives an extension class in $\Ext^1_{\mathcal{H}_{\Xi,\Xi'}}(\Xi\otimes V,\Xi'\otimes V')$ and its image in $\Ext^1_{\mathcal{H}^\aff}(\Xi\otimes V,\Xi'\otimes V')$ corresponds to $e$.

Next we assume that $e$ comes from $E_2$-part.
Then we may assume that there exist $\varphi\in\Hom_{\Omega(1)_{\Xi,\Xi'}}(V,V')$ and $e_0\in E'_2(\Xi,\Xi')$ such that $e$ is given by $e_0\otimes \varphi$.
Take a lift $(a_s)$ of $e_0$ in $E_2(\Xi,\Xi')$ and consider the action of $\mathcal{H}^\aff C[\Omega(1)_{\Xi,\Xi'}]$ on $V\oplus V'$ defined by
\[
T_{\widetilde{s}} \mapsto \begin{pmatrix} \Xi(T_{\widetilde{s}}) & 0\\ a_s(\widetilde{s})\varphi & \Xi'(T_{\widetilde{s}})\end{pmatrix},\quad
T_\omega \mapsto \begin{pmatrix}V(\omega) & 0\\ 0 & V'(\omega)\end{pmatrix}
\]
where $\widetilde{s}\in S_\aff(1)$, $s$ its image in $S_\aff$ and $a_s = 0$ if $s\notin S_2(\Xi,\Xi')$.
Recall that $\Omega(1)_{\Xi,\Xi'}$ acts trivially on $(a_s)$.
Since $\varphi$ is $\Omega(1)_{\Xi,\Xi'}$-equivariant, the calculation as above shows that this gives an action of $\mathcal{H}_{\Xi,\Xi'}$.

\subsection{Calculation of the extensions}
We have
\begin{align*}
\dim\Ext^1_\mathcal{H}(\pi_{\chi,J,V},\pi_{\chi',J',V'}) & = \sum_i \dim\Ext^1_{\mathcal{H}_{\Xi,\Xi'_i}}(\Xi\otimes V,\Xi'_i\otimes V'_i).
\end{align*}
Hence it is sufficient to calculate $\Ext^1_{\mathcal{H}_{\Xi,\Xi'_i}}(\Xi\otimes V,\Xi'_i\otimes V'_i)$.
Now replacing $(\Xi'_i,V_i)$ with $(\Xi',V')$, we explain how to calculate $\Ext^1_{\mathcal{H}_{\Xi,\Xi'}}(\Xi\otimes V,\Xi'\otimes V')$.

Theorem~\ref{thm:surjectivity of exact sequence for supersingulars} implies
\begin{align*}
& \dim\Ext^1_{\mathcal{H}_{\Xi,\Xi'}}(\Xi\otimes V,\Xi'\otimes V')\\
& = \dim H^1(\Omega_{\Xi,\Xi'},\Hom_{\mathcal{H}^\aff}(\Xi\otimes V,\Xi'\otimes V'))
+ \dim \Ext^1_{\mathcal{H}^\aff}(\Xi\otimes V,\Xi'\otimes V')^{\Omega_{\Xi,\Xi'}}.
\end{align*}
Since $\mathcal{H}^\aff$ acts trivially on $V$ and $V$'s, we have
\[
\Hom_{\mathcal{H}^\aff}(\Xi\otimes V,\Xi'\otimes V') = \Hom_{\mathcal{H}^\aff}(\Xi,\Xi')\otimes \Hom_C(V,V')
\]
and it is zero if $\Xi\ne \Xi'$.
If $\Xi = \Xi'$, then 
\[
\Hom_{\mathcal{H}^\aff}(\Xi,\Xi')\otimes \Hom_C(V,V')
=
\Hom_C(V,V')
\]
and hence
\[
H^1(\Omega_{\Xi,\Xi'},\Hom_{\mathcal{H}^\aff}(\Xi\otimes V,\Xi'\otimes V'_i))
\simeq
H^1(\Omega_{\Xi,\Xi'},\Hom_C(V,V')).
\]
This is a group cohomology of an abelian group.

We also have
\[
\Ext^1_{\mathcal{H}^\aff}(\Xi\otimes V,\Xi'\otimes V')
=
\Ext^1_{\mathcal{H}^\aff}(\Xi,\Xi')\otimes\Hom_C(V,V').
\]
and
\[
\Ext^1_{\mathcal{H}^\aff}(\Xi,\Xi')
=E'_1(\Xi,\Xi')\oplus E'_2(\Xi,\Xi').
\]
As we saw in subsection~\ref{subsec:Ext^1,Xi,V,H^aff}, $\Omega_{\Xi,\Xi'}$ acts trivially on $E'_2(\Xi,\Xi')$.
Hence
\[
(E'_2(\Xi,\Xi')\otimes \Hom_C(V,V'))^{\Omega_{\Xi,\Xi'}}
=
E'_2(\Xi,\Xi')\otimes \Hom_{\Omega_{\Xi,\Xi'}(1)}(V,V')
\]
and it is not difficult to calculate this.

Finally we consider $(E'_1(\Xi,\Xi')\otimes \Hom_C(V,V'))^{\Omega_{\Xi,\Xi'}}$.
By Proposition~\ref{prop:extension between supersingulars, for H_aff}, We have $E'_1(\Xi,\Xi')\simeq E_1(\Xi,\Xi') = \bigoplus_{s\in S_1(\Xi,\Xi')}C_s$.
Fix $s_0\in S_1(\Xi,\Xi')$ and let $\Omega(1)_{\Xi,\Xi',s_0}$ be the stabilizer of $s_0$ in $\Omega(1)_{\Xi,\Xi'}$.
Then $C_{s_0}$ is an $\Omega(1)_{\Xi,\Xi',s_0}$-representation.
Consider an $\Omega(1)_{\Xi,\Xi'}$-orbit $\mathcal{S}\subset S_1(\Xi,\Xi')$.
The subspace $\bigoplus_{s\in \mathcal{S}}C_s$ is $\Omega(1)_{\Xi,\Xi'}$-stable and we have an isomorphism
\[
\bigoplus_{s\in \mathcal{S}}C_s \simeq \Ind_{\Omega(1)_{\Xi,\Xi',s_0}}^{\Omega(1)}C_{s_0}
\]
defined by
\[
(a_s)\mapsto (\omega\mapsto (\widetilde{s}_0\mapsto a_{\omega^{-1} s \omega})(\omega^{-1}\widetilde{s}_0\omega)).
\]
Let $\{s_1,\dots,s_r\}$ be a complete representative of the $\Omega(1)_{\Xi,\Xi'}$-orbits in $S_1(\Xi,\Xi')$.
Then we have
\[
E'_1(\Xi,\Xi')\simeq \bigoplus_i \Ind_{\Omega(1)_{\Xi,\Xi',s_i}}^{\Omega(1)_{\Xi,\Xi'}}C_{s_i}.
\]
Hence
\[
(E_1(\Xi,\Xi')\otimes\Hom_C(V,V'))^{\Omega_{\Xi,\Xi'}}
=
\bigoplus_i (C_{s_i}\otimes \Hom_C(V,V'))^{\Omega_{\Xi,\Xi',s_i}}.
\]

\subsection{Example: $G = \mathrm{GL}_n$}
Assume that the data comes from $\mathrm{GL}_n$.
Then the data is as follows, see \cite{MR2122539}.

We have $W_0 = S_n$, $W = S_n\ltimes (F^\times/\mathcal{O}^\times)\simeq S_n\ltimes \Z^n$, $W(1) = S_n\ltimes (F^\times/(1 + (\varpi)))^n = S_n\ltimes (\Z\times \kappa^\times)^n$ and $W^\aff = S_n\ltimes \{(x_i)\in \Z^n\mid \sum x_i = 0\}$.
Set
\[
\omega = \begin{pmatrix}1 & 2 & \cdots & n - 1 & n\\ 2 & 3 & \cdots & n & 1 \end{pmatrix}(0,\dots,0,1)\in S_n\ltimes \Z^n\subset W(1)
\]
and denote its image in $W$ by the same letter $\omega$.
Then $\Omega$ is generated by $\omega$ and $\Omega(1) = \langle \omega\rangle (\kappa^\times)^n$.
We have $\omega^n = (1,\dots,1)$ and it belongs to the center of $W(1)$.
The element $c_{s_i}\in C[Z_\kappa]$ is given by $c_{s_i} = \sum_{t\in \kappa^{\times}}T_{\nu_i(t)\nu_{i + 1}(t)^{-1}}$ where $\nu_i\colon \kappa^\times \to (\kappa^\times)^n$ is an embedding to $i$-th entry and $\nu_{n + 1} = \nu_1$.

Let $\pi_{\chi,J,V}$ and $\pi_{\chi',J',V'}$ be simple supersingular modules and we assume that the dimension of the modules are both $n$.
\begin{rem}
An importance of $n$-dimensional simple supersingular modules is revealed by a work of Grosse-Kl\"onne~\cite{MR3504178}.
He constructed a correspondence between supersingular $n$-dimensional modules of $\mathcal{H}$ and irreducible modulo $p$ $n$-dimensional representations of $\mathrm{Gal}(\overline{F}/F)$.
\end{rem}
We have $\dim \pi_{\chi,J,V} = (\dim V)[\Omega:\Omega_\Xi]$.
Since $\Omega(1)$ is (hence $\Omega(1)_\Xi$ is)  commutative, we have $\dim V = 1$.
Therefore our assumption implies $[\Omega:\Omega_\Xi] = n$.
Since $\omega^n$ is in the center, $\langle \omega^n\rangle\subset \Omega_\Xi$.
Hence $\Omega_\Xi = \langle\omega^n\rangle$ and $\Omega_\Xi = \langle\omega^n\rangle(\kappa^\times)^n$.
Set $\lambda = V(\omega^n)$.
Since $V|_{(\kappa^\times)^n} = \chi$, $V$ is determined by $\chi$ and $\lambda$.
We also put $\lambda' = V'(\omega^n)$.

We define $\chi_j\colon \kappa^\times\to C^\times$ by $\chi(t_1,\dots,t_n) = \chi_1(t_1)\dots \chi_n(t_n)$ and we extend it for any $j\in\Z$ by $\chi_{j\pm n} = \chi_j$.
Then 
\[
(s_i\chi)_j = 
\begin{cases}
\chi_j & (j\ne i,i+1),\\
\chi_{i + 1} & (j = i),\\
\chi_i & (j = i + 1).
\end{cases}
\]
The description of $c_{s_i}$ shows $\chi(c_{s_i}) = 0$ if and only if $\chi_i = \chi_{i + 1}$ if and only if $s_i\chi = \chi$.
Therefore $S_{\aff,\chi} = \{s_i\in S_\aff\mid \chi_i = \chi_{i + 1}\}$.

We consider $\Ext^1_{\mathcal{H}_{\Xi,\Xi'}}(\Xi\otimes V,\Xi'\otimes V')$.
By Theorem~\ref{thm:surjectivity of exact sequence for supersingulars}, we have the exact sequence
\begin{multline*}
0\to H^1(\Omega_{\Xi,\Xi'},\Hom_{\mathcal{H}^\aff}(\Xi,\Xi')\otimes\Hom_C(V,V'))
\to \Ext^1_{\mathcal{H}_{\Xi,\Xi'}}(\Xi\otimes V,\Xi'\otimes V')
\\\to \Ext^1_{\mathcal{H}^\aff}(\Xi\otimes V,\Xi'\otimes V')^{\Omega_{\Xi,\Xi'}}\to 0.
\end{multline*}
The space $\Hom_{\mathcal{H}^\aff}(\Xi,\Xi')$ is $C$ if $\Xi = \Xi'$ and $0$ otherwise.
We have $\Omega_{\Xi,\Xi'} = \langle\omega^n\rangle\simeq \Z$ and $\omega^n$ acts on $\Hom_C(V,V')$ by $\lambda^{-1}\lambda'$.
Therefore $H^1(\Omega_{\Xi,\Xi'},\Hom_C(V,V')) = C$ if $\lambda = \lambda'$ and $0$ otherwise.
Namely we get
\begin{equation}\label{eq:GL_n,first part of Ext}
\dim H^1(\Omega_{\Xi,\Xi'},\Hom_{\mathcal{H}^\aff}(\Xi,\Xi')\otimes\Hom_C(V,V'))
=
\begin{cases}
1 & \Xi = \Xi',V = V',\\
0 & \text{otherwise}.
\end{cases}
\end{equation}

Note that $\Omega_{\Xi,\Xi'}$ acts on $S_\aff$ trivially since $\Omega_{\Xi,\Xi'}$ is in the center of $W$.
Hence the stabilizer of each $s\in S_\aff$ in $\Omega_{\Xi,\Xi'}$ is $\Omega_{\Xi,\Xi'}$ itself and each orbit is a singleton.
Therefore by the previous subsection, we have
\begin{align*}
&\Ext^1_{\mathcal{H}^\aff}(\Xi\otimes V,\Xi'\otimes V')^{\Omega_{\Xi,\Xi'}}\\
&=
\bigoplus_{s\in S_1(\Xi,\Xi')}(C_s\otimes \Hom_C(V,V'))^{\Omega_{\Xi,\Xi'}}
\oplus
E'_2(\Xi,\Xi')\otimes\Hom_{\Omega_{\Xi,\Xi'}}(V,V').
\end{align*}
Since $\omega^n\in \Omega_{\Xi,\Xi',s} = \Omega_{\Xi,\Xi'}$ is in the center of $W(1)$, it acts trivially on $C_s$.
Hence $(C_s\otimes \Hom_C(V,V'))^{\Omega_{\Xi,\Xi',s}} = \Hom_{\Omega_{\Xi,\Xi'}}(V,V')$ and it is not zero if and only if $\lambda = \lambda'$.
Hence
\[
\Ext^1_{\mathcal{H}^\aff}(\Xi\otimes V,\Xi'\otimes V')^{\Omega_{\Xi,\Xi'}}
\simeq
\begin{cases}
C^{S_1(\Xi,\Xi')}\oplus E_2'(\Xi,\Xi') & \lambda = \lambda',\\
0 & \text{otherwise.}
\end{cases}
\]

A complete representative of $\Omega/\Omega_{\Xi}\Omega_{\Xi'}$ is given by $\{1,\omega,\dots,\omega^{n - 1}\}$.
Put $\Xi'_i = \Xi'\omega^i$.
This is parametrized by $(\chi\omega^i,J_i = \omega^iJ\omega^{-i})$.
We have $(\chi\omega^i)_j = \chi_{j + i}$ and $\omega^iJ\omega^{-i} = \{s_{j + i}\mid s_j\in J\}$.
We put $V'_i = V'\omega^i$.
Then $V'_i(\omega) = V'(\omega)$ and $V'_i|_{Z_\kappa} = \chi\omega^i$.

The cohomology group $H^1(\Omega_{\Xi,\Xi'_i},\Hom_{\mathcal{H}^\aff}(\Xi,\Xi'_i)\otimes\Hom_C(V,V'_i))$ is zero if and only if $(\Xi,V)\ne (\Xi'_i,V'_i)$ by \eqref{eq:GL_n,first part of Ext}.
There exists at most one $i$ such that $(\Xi,\Xi'_i)\ne (V,V'_i)$ and such $i$ exists if and only if $(\Xi,V)$ is $\Omega$-conjugate to $(\Xi',V')$.
Hence
\begin{align*}
&\dim \bigoplus_{i = 0}^{n - 1}H^1(\Omega_{\Xi,\Xi'_i},\Hom_{\mathcal{H}^\aff}(\Xi,\Xi'_i)\otimes\Hom_C(V,V'_i))\\
&=
\begin{cases}
1 & \text{$(\Xi,V)$ is $\Omega$-conjugate to $(\Xi',V')$},\\
0 & \text{otherwise}.
\end{cases}
\end{align*}
We also have
\begin{align*}
&\dim\bigoplus_{i = 0}^{n - 1}\Ext^1_{\mathcal{H}^\aff}(\Xi\otimes V,\Xi'\otimes V')^{\Omega_{\Xi,\Xi'}}\\
&\simeq
\begin{cases}
\sum_{i = 0}^{n - 1}(\#S_1(\Xi,\Xi'_i) + \dim E_2'(\Xi,\Xi'_i)) & \lambda = \lambda',\\
0 & \text{otherwise.}
\end{cases}
\end{align*}
and each term can be calculated by the description in Proposition~\ref{prop:extension between supersingulars, for H_aff}.

\subsection{$\mathrm{GL}_2$}
Now we assume $n = 2$ and we compute $\Ext^1_{\mathcal{H}}(\pi_{\chi,J,V},\pi_{\chi',J',V'})$.
We continue to use the notation in the previous subsection.
Then $\omega$ switches $s_0$ and $s_1$.
\begin{lem}\label{lem:only self-extension when GL_2}
The non-vanishing of $\Ext^1_{\mathcal{H}}(\pi_{\chi,J,V},\pi_{\chi',J',V'})$ implies that $(\chi,J,V)$ is conjugate to $(\chi',J',V')$ by $\Omega$.
\end{lem}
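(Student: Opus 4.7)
My plan is to exploit the decomposition
\[
\dim\Ext^1_{\mathcal{H}}(\pi_{\chi,J,V},\pi_{\chi',J',V'}) = \sum_{i=0}^{1}\dim\Ext^1_{\mathcal{H}_{\Xi,\Xi'_i}}(\Xi\otimes V,\Xi'_i\otimes V'_i)
\]
established above and show that every non-zero summand forces $(\chi,J,V)$ to be $\Omega$-conjugate to $(\chi',J',V')$. By Theorem~\ref{thm:surjectivity of exact sequence for supersingulars}, each summand splits into a group-cohomology piece and an invariants piece, so I will treat them separately. The group-cohomology piece is already handled by equation~\eqref{eq:GL_n,first part of Ext}: it vanishes unless $(\Xi,V)=(\Xi'_i,V'_i)$, which is exactly the statement that the triples are $\Omega$-conjugate.

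For the invariants piece, I plan to use the description
\[
\Ext^1_{\mathcal{H}^\aff}(\Xi\otimes V,\Xi'_i\otimes V'_i)^{\Omega_{\Xi,\Xi'_i}} \simeq \bigl(C^{S_1(\Xi,\Xi'_i)}\oplus E'_2(\Xi,\Xi'_i)\bigr)\otimes\Hom_{\Omega_{\Xi,\Xi'_i}}(V,V'_i)
\]
valid when $\lambda=\lambda'$, and divide into two cases according to whether $\chi_1=\chi_2$. When $\chi_1\ne\chi_2$, we have $J=\emptyset$ and $\Xi(T_{\widetilde{s}})=0$ for all $s$, so $A_2(\Xi,\Xi'_i)=\emptyset$; moreover any $s$ contributing to $S_2$ would force $\chi'_i=s\chi=\chi^{\mathrm{sw}}$, again with unequal coordinates, so $A_3=\emptyset$ as well and $E'_2=0$. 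Non-vanishing then forces $S_1(\Xi,\Xi'_i)\ne\emptyset$, which gives $\chi'_i=\chi^{\mathrm{sw}}$, $J'_i=\emptyset$, and hence $(\chi',J')$ lies in the $\Omega$-orbit of $(\chi,J)$. Since $V$ and $V'$ are determined by their restrictions to $Z_\kappa$ together with $\lambda,\lambda'$, the full triples are $\Omega$-conjugate.

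When $\chi_1=\chi_2$, the condition $s\notin S_{\aff,\chi}=S_\aff$ in the definition of $S_1$ is impossible, so $S_1=\emptyset$ and we need $E'_2(\Xi,\Xi'_i)\ne 0$. Any $s$ with $a_s\ne 0$ satisfies $\chi'_i=s\chi=\chi$, so $\chi'_i=\chi$ and $J'_i\in\{\{s_0\},\{s_1\}\}$; a non-trivial extension requires $J\ne J'_i$, and then $\omega$ swaps $\{s_0\}\leftrightarrow\{s_1\}$, giving $(\chi,J)\sim_\Omega(\chi,J'_i)=(\chi'_i,J'_i)$. Combined with $\lambda=\lambda'$, which identifies $V$ with $V'_i$ after twisting by $\omega$, this produces the required $\Omega$-conjugacy.

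The main technical obstacle will be keeping track of the action of $\omega$ on the various pieces, and in particular verifying that $V\omega\cong V'$ precisely when $\lambda=\lambda'$ (using that $\omega^2$ is central in $W(1)$ and $V|_{Z_\kappa}=\chi$). This is where the hypothesis $n=2$ really simplifies matters: the full representative set $\Omega/\Omega_\Xi=\{1,\omega\}$ has only two elements, so the bookkeeping collapses the case analysis to the two scenarios above.
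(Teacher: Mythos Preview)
Your proposal is correct and follows essentially the same strategy as the paper: both reduce to a single summand $\Ext^1_{\mathcal{H}_{\Xi,\Xi'_i}}(\Xi\otimes V,\Xi'_i\otimes V'_i)$, split it via Theorem~\ref{thm:surjectivity of exact sequence for supersingulars} into the $H^1$-piece (handled by~\eqref{eq:GL_n,first part of Ext}) and the invariants piece, and then use that for $\mathrm{GL}_2$ one has $s_0\chi=s_1\chi=\chi\omega$ together with the supersingularity constraint on $J,J'$. The only organizational difference is that you bifurcate the invariants analysis according to whether $\chi_1=\chi_2$, whereas the paper first deduces $\chi'=\chi\omega$ uniformly and then runs through the five possible pairs $(J,J')$, eliminating $(\{s_0\},\{s_0\})$ and $(\{s_1\},\{s_1\})$ by checking $S_1=A_2=A_3=\emptyset$; these are two packagings of the same case analysis.
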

\begin{proof}
As we have seen in the above, non-vanishing of $\Ext^1$ implies $V(\omega) = V'(\omega) = (V'\omega)(\omega)$.
Hence it is sufficient to prove that $(\chi,J')$ is conjugate to $(\chi',J')$.

If $H^1(\Omega_{\Xi,\Xi'},\Hom_{\mathcal{H}^\aff}(\Xi\otimes \Xi')\otimes\Hom_C(V,V'))\ne 0$, we have $\Xi = \Xi'$ and $V = V'$.
Hence we have the lemma.

If $\Ext^1_{\mathcal{H}^\aff}(\Xi\otimes V,\Xi'\otimes V')^{\Omega_{\Xi,\Xi'}}\ne 0$, then $C_s\ne 0$, hence $\chi' = s\chi$ for some $s\in S_\aff$.
Since we assume $G = \mathrm{GL}_2$, $s_0\chi = s_1\chi = \chi\omega$.
Since $\pi_{\chi,J,V}$ and $\pi_{\chi',J',V'}$ are both supersingular, the possibility of $(J,J')$ is $(\emptyset,\emptyset)$, $(\{s_0\},\{s_1\})$, $(\{s_0\},\{s_1\})$, $(\{s_0\},\{s_0\})$, $(\{s_1\},\{s_1\})$ and except the last two cases, we have $J = \omega J'\omega^{-1}$.
If $J = J' = \{s_0\}$, then $s_0\in S_{\aff,\chi}$, hence $s_0\chi = \chi$.
Since $s_0\chi = s_1\chi$, we have $S_{\aff,\chi} = S_\aff$.
Hence $S_1(\Xi,\Xi') = \emptyset$.
We also have $A_2(\Xi,\Xi') = A_3(\Xi,\Xi') = \emptyset$.
Therefore we get $\Ext^1_{\mathcal{H}^\aff}(\Xi\otimes V,\Xi'\otimes V')^{\Omega_{\Xi,\Xi'}} = 0$.
By the same way, if $J = J' = \{s_1\}$ then $\Ext^1_{\mathcal{H}^\aff}(\Xi\otimes V,\Xi'\otimes V')^{\Omega_{\Xi,\Xi'}} = 0$.
\end{proof}
Since $\pi_{\chi,J,V}$ only depends on the $\Omega$-orbit of $(\chi,J,V)$, we may assume $(\chi,J,V) = (\chi',J',V')$.
In this case, $H^1(\Omega_{\Xi,\Xi'},\Hom_{\mathcal{X}^\aff}(\Xi\otimes V,\Xi_i\otimes V_i))$ is one-dimensional if $i = 0$ and zero if $i = 1.$

\begin{enumerate}
\item The case of $\chi_1 = \chi_2$.
Then we have $S_{\aff,\chi} = S_\aff$.
By the proof of Lemma~\ref{lem:only self-extension when GL_2}, we have $\Ext^1_{\mathcal{H}^\aff}(\Xi\otimes V,\Xi\otimes V) = 0$.
We have $S_1(\Xi,\Xi_1)= \emptyset$, $A_2(\Xi,\Xi_1) = J_1 = \omega J\omega^{-1}$ and $A_3(\Xi,\Xi_1) = J_0 = J$.
Hence the description in Proposition~\ref{prop:extension between supersingulars, for H_aff} shows that $\dim E'_2(\Xi,\Xi') = 1$ and hence $\dim\Ext^1_{\mathcal{H}^\aff}(\Xi\otimes V,\Xi\otimes V) = 1$.

\item The case of $\chi_1 \ne \chi_2$.
Then we have $S_{\aff,\chi} = \emptyset$.
Since $\chi\ne s\chi = s\chi_0$ for $s = s_0,s_1$, $C_s = 0$.
Therefore $\Ext^1_{\mathcal{H}^\aff}(\Xi\otimes V,\Xi_0\otimes V_0) = 0$.
Since $S_{\aff,\chi} = \emptyset$, $\Xi(T_s) = \Xi'(T_s) = 0$ for any $s\in S_\aff$.
We have $A_2(\Xi,\Xi_1) = A_3(\Xi,\Xi_1) = \emptyset$ and $S_1(\Xi,\Xi_1) = S_\aff$.
Therefore $E'_1(\Xi,\Xi_1) = 0$ and $\dim E'_2(\Xi,\Xi_1) = \#S_1(\Xi,\Xi_1) = \#S_\aff = 2$.
\end{enumerate}
Hence we have
\[
\dim\Ext^1_{\mathcal{H}}(\pi_{\chi,J,V},\pi_{\chi',J',V'}) = 
\begin{cases}
0 & (\pi_{\chi,J,V}\not\simeq\pi_{\chi',J',V'}),\\
2 & (\pi_{\chi,J,V}\simeq\pi_{\chi',J',V'},\ \chi_1 = \chi_2),\\
3 & (\pi_{\chi,J,V}\simeq\pi_{\chi',J',V'},\ \chi_1 \ne \chi_2).
\end{cases}
\]
This recovers \cite[Corollary~6.7]{MR2931521}.
(Note that in \cite{MR2931521}, they calculate the extensions with fixed central character. Since we do not fix the central character here, the dimension calculated here is one greater than the dimension they calculated.)

\end{document}